\definecolor{totocolor}{rgb}{1,0.6,1}
\colorlet{LightRed}{red!30}
\colorlet{ColorPink}{blue!20}
\colorlet{lightgreen}{green!50}
\tikzstyle{terminal}=[circle,draw]
\pgfplotsset{width=7cm,compat=1.15}
\newcommand{\cqfd}{\nobreak \ifvmode \relax \else
      \ifdim\lastskip<1.5em \hskip-\lastskip
      \hskip1.5em plus0em minus0.5em \fi \nobreak
      \vrule height0.75em width0.5em depth0.25em\fi}
\newcommand{\leqnomode}{\tagsleft@true}
\def\<{\langle}
\def\>{\rangle}
\def\Chi{\raise .3ex
\hbox{\large $\chi$}}
\def\({\Bigl (}
\def\){\Bigr )}
\newcommand{\bea}{$$ \begin{array}{lll}}
\newcommand{\eea}{\end{array} $$}
\newcommand{\bi}{\begin{itemize}}
\newcommand{\ei}{\end{itemize}}
\newtheorem{theorem}{Theorem}
\newtheorem{lemma}{Lemma}
\newtheorem{definition}{Definition}
\newtheorem{remark}{Remark}
\newtheorem{proposition}{Proposition}
\DeclareMathOperator{\R}{{\mathbb R}}
\newcommand{\aeq}{A_1^{\infty}}
\newcommand{\qeq}{Q_2^{\infty}}
\newcommand{\xeq}{Q_3^{\infty}}
\newcommand{\dt}{\frac{d}{dt}}
\renewenvironment{proof}{\noindent{\bf Proof.}}{\hfill
  $\blacksquare$\par\noindent}
\providecommand{\keywords}[1]
{
  \small	
  \textbf{\textit{Keywords: }} #1
}
\begin{document}
\title{Tumorigenesis and axons regulation for the pancreatic cancer: a mathematical approach.}
\date{}

\author[1]{\small Sophie Chauvet}
\author[2]{\small Florence Hubert}
\author[1]{\small Fanny Mann}
\author[2, 3]{\small Mathieu Mezache}

\affil[1]{\footnotesize Aix Marseille Univ, CNRS, IBDM, Marseille, France}
\affil[2]{\footnotesize Aix Marseille Univ, CNRS, Centrale Marseille, I2M, Marseille, France}
\affil[3]{\footnotesize Université Paris-Saclay, INRAE, MaIAGE, 78350, Jouy-en-Josas, France}

\maketitle
\keywords{Dynamical system, Asymptotic analysis, Cancer, Parameter calibration, Sensitivity analysis}
\begin{abstract}
{
The nervous system is today recognized to play an important role in the development of cancer. Indeed, neurons extend long processes (axons) that grow and infiltrate tumors in order to regulate the progression of the disease in a positive or negative way, depending on the type of neuron considered. Mathematical modelling of this biological process allows to formalize the nerve-tumor interactions and to test hypotheses in silico to better understand this phenomenon. In this work, we introduce a system of differential equations modelling the progression of pancreatic ductal adenocarcinoma (PDAC) coupled with associated changes in axonal innervation. The study of the asymptotic behavior of the model confirms the experimental observations that PDAC development is correlated with the type and densities of axons in the tissue. In addition, we study the identifiability of the model parameters. This informs on the adequacy between the parameters of the model and the experimental data. It leads to significant insights such that the transdifferentiation phenomenon accelerates during the development process of PDAC cells. Finally, we give an example of a  simulation of  the effects of partial or complete denervation that  sheds lights on complex
correlation between the cell populations and axons with opposite functions.

} 
\end{abstract}

\section{Introduction}
Pancreatic ductal adenocarcinoma (PDAC) is a leading cause of cancer death in men and women.  Late detection of this cancer, due to the near absence of symptoms in the early stages, is associated with a poor prognosis and an overall 5-year survival rate of less than 5$\%$ \cite{bengtsson2020actual}. In recent decades, the impact of the microenviroment on tumour progression has become widely recognized, which has led to  the development of new therapies such as immunotherapies.  More recently, it has been shown that fibers of the nervous system infiltrate the tumor microenvironment where they also participate in the regulation of cancer development and progression \cite{guillot2020sympathetic}. It is therefore relevant to study the neurobiology of cancers through mathematical modelling in order to simulate the responses of cancer cells to innervation and predict at long term the effects of therapies targeting neuron-tumor interactions. \\

Mathematical modelling of the impact of the microenvironment on tumor progression has been widely investigated for many types of cancers. In the case of pancreatic cancer, a model of the interplay between the immune system and tumor progression has been proposed \cite{louzoun2014}. As far as we know, mathematical modeling of the neural regulation of tumor progression has only be performed for prostate cancer \cite{lolas2016tumour}. This model confirmed experimental observations that a tumor is able to recruit nerves that, in turn, promote tumor development and metastatic spread. However, this initial model did not take into account the full functional diversity of neurons of the peripheral nervous system (PNS) and in particular their potential tumor suppressive effect discovered more recently in PDAC. To our knowledge, no mathematical model integrating the antitumor and protumor activities of the PNS currently exists.  \\

In this article, we developed an ordinary differential equation (ODE) model that describes and simulates the relationship between the PNS and pancreatic cancer development. The model is based on and calibrated with experimental data obtained from a genetically engineered mouse model of PDAC, in which the innervation of early pre-cancerous lesions and cancer have been characterized by three-dimensional (3D) histology  (c.f \cite{guillot2020sympathetic}). This model aims to investigate how dynamic changes in the neuronal composition of the microenvironment influence tumor progression.
\\

{The paper is organized as follows. In Section \ref{sec:modeling}, we review the biological background behind the mechanisms of the PDAC progression. We introduce the mathematical model and detail the assumptions made. 
In Section \ref{sec:model}, we study the mathematical properties of the model. We prove its well-posedness and the convergence towards the pathological equilibrium under some assumptions. We extract some exponential convergence estimators which allow us to reduce the system and performed the asymptotic analysis on the limit system.
In Section \ref{sec:calib}, we study the identifiability of the parameters when the model is confronted to the experimental data. We perform a sensitivity analysis which sheds lights on the effect of the axons on the PDAC progression.}
\section{Modeling the evolution of cell populations and axons}\label{sec:modeling}
\subsection{Biological background}

The PNS is a vast network of nerves and ganglia that connect the brain to the other organs of the body. It consists of both afferent (sensory) nerves and efferent (motor) nerves that carry information in and out of the brain, respectively. While essential for internal body communication and proper regulation of physiological functions, the PNS also plays a newly-identified and pivotal role in the control of tumorigenesis. For example, denervation experiments in animal models of prostate and gastric cancers demonstrated a role of the visceral efferent motor system (also known as the autonomic nervous system) in promoting tumor progression and metastasis \cite{magnon2013autonomic,zhao2014denervation}. 
These findings have led to the emerging concept of “nerve dependence in tumorigenesis” and a growing interest in repositioning inhibitors of nerve signaling for cancer treatment \cite{boilly2017nerve,zahalka2020nerves}.\\

\textbf{The bifunctionnal role of the PNS in pancreatic tumorigenesis.} The impact of the PNS varies considerably depending on the tumor site. This has been highlighted by studies in animal models of PDAC. 
Indeed, and in contrast to its promoting role in prostate cancer, the autonomic nervous system has appeared to exert tumor suppressive effects in PDAC. The autonomic nervous system is divided functionally and anatomically into the sympathetic and parasympathetic nervous systems, which work together synergistically to regulate pancreatic functions \cite{love2007autonomic}. 
In PDAC, several studies reported that transection of the vagus nerve, which provides parasympathetic inputs to the pancreas, promotes pancreatic cancer progression \cite{partecke2017subdiaphragmatic,renz2018cholinergic}. 
A similar acceleration of PDAC development and increased metastasis have been reported after selective depletion of pancreatic sympathetic innervation \cite{guillot2020sympathetic}, further supporting a protective function of the autonomic nervous system in this type of cancer. Conversely, the pro-tumoral influence of the PNS on PDAC is exerted by sensory neurons, whose selective ablation or functional silencing slows tumor progression and improves survival \cite{bai2011inhibition,saloman2016ablation,sinha2017panin}.
Finally, when both sympathetic and sensory innervation of the pancreas are diminished \cite{guillot2020sympathetic}, this leads to an acceleration of PDAC development, suggesting a preponderant influence of the autonomic nervous system during tumor initiation. Thus, taking into account the functional specializations of nerve subtypes and their integration is crucial for understanding and predicting the trajectory of innervated tumors.\\

\textbf{Neuroplastic changes associated with tumorigenesis.} The PNS control over tumorigenesis is based on its ability to innervate developing tumors and release neurotransmitters, or other factors, in the cancer cell environment. 
Precise mapping of pancreatic tissue innervation and its evolution during PDAC development has been performed on histological sections and more recently using 3D light-sheet fluorescence microscopy (LSFM) on murine and human pancreas \cite{chien2019human, makhmutova2021optical}.
The results revealed striking differences in the autonomic and sensory innervation patterns of the healthy pancreas, with a dense meshwork of autonomic nerve fibers (both sympathetic and parasympathetic) throughout the exocrine pancreas, from which PDAC arises, and an absence of sensory fibers in these same regions, the latter residing along the arteries and innervating the pancreatic islets \cite{guillot2020sympathetic,lindsay2006quantitative}.

An initiating event for the development of PDAC is the trans-differentiation of acinar cells (the functionnal unit of the exocrine pancreas) into progenitor-like cells with ductal characteristics, a process called acinar-to-ductal metaplasia (ADM).  ADM can progress to form premalignant pancreatic intraepithelial neoplasia (PanIN) and eventually pancreatic cancer \cite{storz2017acinar}. 
A substantial innervation of the early pancreatic lesions by autonomic axons has been reported, with PanINs appearing as hotspots of sympathetic hyperinnervation \cite{guillot2020sympathetic}. 
While some sensory fibers can also be detected around PanINs, their density remains relatively low compared to autonomic fibers \cite{sinha2017panin,stopczynski2014neuroplastic}. 
In invasive PDAC tumors, however, this picture is completely reversed: a high density of sensory fibers deeply infiltrates the center of the tumors, while a moderate sympathetic innervation limited to the peripheral regions of the tumors was reported \cite{ceyhan2009pancreatic,guillot2020sympathetic}. In conclusion, the data revealed stage-specific remodeling of PNS networks during tumorigenesis that may have an important function by shifting an initially protective neural environment (autonomic $>$ sensory) into a milieu favorable for cancer cell growth (sensory $>$ autonomic). \\

\subsection{Mathematical model}
We will focus on the pancreas as the main domain of our model, thus including both the pancreatic cells and the neighboring nerve fibers (or axons). In this model, we distinguish between the cell concentrations denoted $Q_i$ for $i\in \{0,1,2,3\}$, with:  \textit{Acini} $Q_0$, \textit{ADM} $Q_1$, \textit{PanIN} $Q_2$ and \textit{PDAC} $Q_3$. The variables corresponding to PNS axons are denoted $A_1, \; A_2$.  $A_1$ is the variation of the density of \textit{autonomic axons} with respect to its equilibrium at initial state (denoted $A_1^{eq}$). Hence, we consider the following
$$ A_1 \quad = \quad \text{density of autonomic axons} - \text{initial equilibrium density}.$$
This formalism allows A1 to take negative values. One can justify it by the fact that neuroplastic changes of autonomic axons are non-monotonous: the density increases in PanIN and decreases in PDAC compared to the initial equilibrium density, ie., the density in acini \cite{ceyhan2009pancreatic,guillot2020sympathetic}. Finally,   $A_2$ is the density of \textit{sensory axons}.\\

\begin{figure}[ht!]
\begin{tikzpicture}
\tikzstyle{pop}=[fill=black!10]
\tikzstyle{axo}=[fill=red!10]
\tikzstyle{edge}=[->,>=latex,ultra thick]
\tikzstyle{mineff} =[-|,>=latex,dashdotted, red!50, ultra thick]

\node[pop] (H) at (-2,-2) {Acinus $Q_0$};
\node[pop] (C1) at (2,-2) {ADM $Q_1$};
\node[pop] (C2) at (6,-2) {PanIN $Q_2$};
\node[pop] (C3) at (10,-2) {PDAC $Q_3$};
\node[axo] (A1) at (2,1) {autonomic axons $A_1$};
\node[axo] (A2) at (6,-5) {Sensory axons $A_2$};
\draw[edge] (H) -- (C1);
\draw[edge] (C1) -- (C2);
\draw[edge] (C2) -- (C3);
\draw[->,>=latex,dashed, thick] (C1) -- (A1) ;
\draw[->,>=latex,dashed] (C2) -- (A1) ;
\draw[-|,>=latex,dashdotted, ultra thick] (C3) -- (A1.east) ;
\draw[->,>=latex,dashed] (C3) -- (A2) ;
\draw[->,>=latex,dashed] (C2) -- (A2) ;
\draw[mineff] (A1) --(3.5,-1.9);
\draw[mineff] (A1) --(7.5,-1.9);
\draw[->,>=latex,dashed, red!50] (A2) --(7.5,-2.1);
\draw[->,>=latex, red!50] (A2) --(8.3,-2.3);
\draw[->,>=latex, red!50] (A2) --(4.4,-2.3);
\draw[->,>=latex, red!80] (A1) --(4.4,-1.7);
\draw[->,>=latex, red!80] (A1) --(8.3,-1.7);
\draw[-latex,bend left]  (C2) edge (0,-2);
\draw[-latex,bend left]  (C3) edge (0,-2);
\path (C2) edge [loop left] (C2);
\path (C3) edge [loop left] (C3);
\end{tikzpicture}

\caption{\textbf{Schematic representation of the interactions among the model variables.} {\small Each variable corresponds to a rectangular box; note that cell populations are in gray while axons are in red. {Solid thin black arrows denote regulation of either proliferation of cells or transfer rates and solid thick black arrows denote enhancement of the transfer rate.} Dashed black arrows denote enhancement of the growth, dashed black lines with a vertical end denote inhibition of the growth. Solid red arrows denote enhancement of the proliferation, dashed red arrows denote enhancement of the transfer rate and dashed red lines with a vertical end denote inhibition of the transfer rate.}\label{fig:reac_scheme}}
\end{figure}
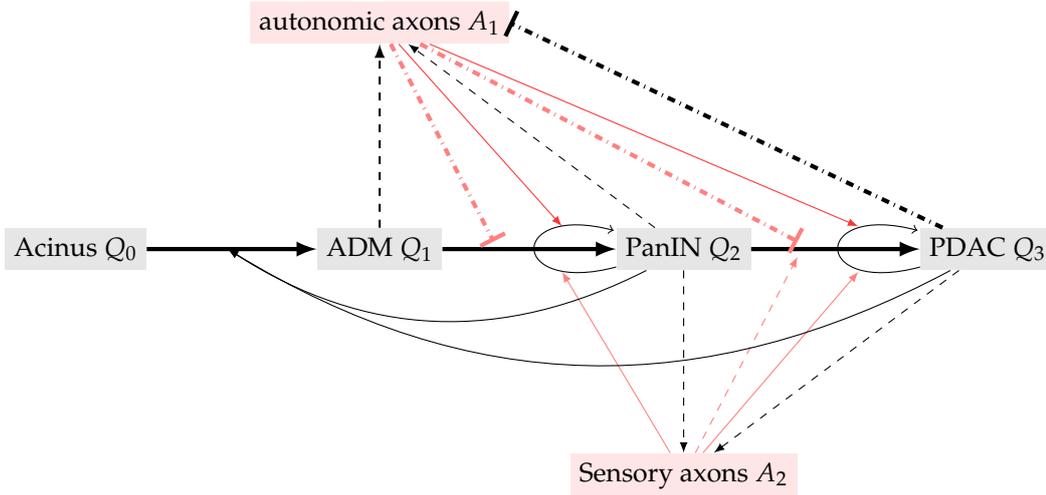

We propose multi-compartmental model in which the growth of the PNS is coupled to the transfer from healthy acini to PDAC, and to the proliferation of pre-cancerous (PanIN) and cancerous (PDAC) cells. A schematic of the model, showing the variables and their interactions, can be found in Figure \ref{fig:reac_scheme}.\\

\textbf{Transfer between compartments.} We consider that Acini progress in ADM with a transfer rate $\pi_0$. The transfer term is up-regulated by the presence of PanIN and PDAC through a Michaelis-Menten term with maximal amplitude $\delta_0$ and 1 as the Michaelis constant. This assumption leads to the following interpretation: the PanIN and the PDAC are able to self-promote and consequently decrease the concentration of Acini in the system. However the increasing influence of PanIN and PDAC on the transfer term is saturated. This transfer drives the dynamics of Acini and is described by the following equation:
\begin{align}
\begin{split}
\frac{d}{dt} Q_0(t)= &\underbrace{-\pi_0\left[1 + \delta_0 \frac{Q_2(t) +Q_3(t)}{1+Q_2(t) +Q_3(t)}\right] Q_0(t).}_\text{\parbox{6cm}{transfer from Acini to ADM up regulated by the PanIN and PDAC}} \label{eq:Mod_Q0}
\end{split}
\end{align}
Next, ADM become PanIN with a transfer rate $\pi_1$. Moreover, the appearance of PanIN is positively correlated with a high concentration of autonomic axons that have an inhibitory effect on cancer progression. We model this phenomenon by adding the dependency of amplitude $-\beta_1$ on $A_1$ to the transfer term. Moreover, we add a multiplicative regularization term $\rho(A_1)$ which ensures that if $A_1$ takes negative values, $\rho(A_1)$ is almost equal to 0 e.g. $$\rho(x)=\tfrac{1}{2}\left(1+\tfrac{x}{\sqrt{x^2 +\epsilon}} \right)\;, \; 0<\epsilon\ll 1.$$ It implies that if the density of autonomic axons in the system is low, the regulatory impact of axons is also low or non-existent. Hence, the dynamics of ADM is described by the following equation:
\begin{align}
\begin{split}
\frac{d}{dt} Q_1(t) = &\pi_{0}\left[1 + \delta_0\frac{Q_2(t) +Q_3(t)}{1+Q_2(t) +Q_3(t)}\right] Q_0(t) -\underbrace{\pi_1 \left[1-\beta_1A_1(t) \rho\left(A_1(t)\right)\right] Q_1(t)}_\text{\parbox{4cm}{transfer from ADM to PanIN down regulated by the autonomic axons}}.
\end{split}
\end{align}
Finally, PanIN progress to PDAC with a transfer rate $\pi_2$. This process coincides with the increase of sensory axons that enhance PDAC growth and a decrease of  autonomic axons that have the opposite effect. We model this mixed effect in the transfer term by adding a linear dependencies of amplitude $\delta_2$ on $A_2$ and of amplitude $-\beta_2$ on $A_1$. {In addition, we still introduce the regularization term on the inhibiting effect of the autonomic axons which ensures that there is no effect on the transfer in case the density of autonomic axons $A_1$ is negative. It leads to the following mathematical formulation of the dynamics transfert:}
\begin{align*}
\begin{split}
\frac{d}{dt}Q_2(t) = & \textit{ proliferation term} +\pi_1 \left[1-\beta_1A_1(t) \rho\left(A_1(t)\right)\right] Q_1(t)-\underbrace{\pi_{2} \left[1-\beta_2A_1(t) \rho\left(A_1(t)\right)+\delta_2 A_2(t) \right] Q_2(t).}_\text{\parbox{6.5cm}{transfer from PanIN to PDAC down regulated by the autonomic axons and up regulated by the sensory axons}} 
\end{split}
\end{align*}
\\

\textbf{Proliferation terms.} We model cell proliferation in PanIN and PDAC by adding a logistic-like growth term for $Q_2$ and $Q_3$ of respective rate $\gamma_2$ and $\gamma_3$. The saturation term $\tau_C$ is applied to the total concentration of proliferating cells which corresponds to $Q_2 + Q_3$. We model the PNS effect in the growth process by incorporating the axons in the logistic law. We assume that the sensory axons promote the self-renewing growth of (pre)cancerous cells until the population attains a certain threshold. Furthermore, we assume that the autonomic axons have a mixed effect on this growth term. When the autonomic axons concentration is above its initial equilibrium state $A_1^{eq}$, the growth of (pre)cancerous cells is promoted. Once the concentration is lower than its equilibrium state, the cancerous cells still proliferate but attain a lower {carrying capacity}. The effects of $A_1$ and $A_2$ in the logistic law can be interpreted as tumor growth factors or inhibition of tumor growth factors and these effects are limited by the thresholds $\tau_{A_1}^C$ and $\tau_{A_2}^C$.    
The dynamics of PanIN and PDAC are then described by the following equations:
\begin{align}
\begin{split}\label{eq:Mod_Q2}
\frac{d}{dt} Q_2(t) = &\underbrace{\gamma_{2}Q_2(t)\left(1 -\tfrac{Q_2(t)+Q_3(t)}{\tau_{C}}+ \tfrac{A_1(t)}{\tau_{A_1}^C}+\tfrac{A_2(t)}{\tau_{A_2}^C}\right)}_\text{\parbox{4.5cm}{proliferation regulated by the axons}} +\pi_1 \left[1-\beta_1A_1(t) \rho\left(A_1(t)\right)\right] Q_1(t)\\
&-\pi_{2} \left[1-\beta_2A_1(t) \rho\left(A_1(t)\right)+\delta_2 A_2(t) \right] Q_2(t)
\end{split}
\\[2ex]
\begin{split}\label{eq:Mod_Q3}
\frac{d}{dt} Q_3(t) = & \underbrace{\gamma_{3}Q_3(t)\left(1 -\tfrac{Q_2(t)+Q_3(t)}{\tau_{C}}+\tfrac{A_1(t)}{\tau_{A_1}^C}+\tfrac{A_2(t)}{\tau_{A_2}^C}\right)}_\text{{proliferation regulated by the axons}} + \pi_2 \left[1-\beta_2A_1(t) \rho\left(A_1(t)\right)+\delta_2 A_2(t) \right] Q_2(t)
\end{split}
\end{align}\\
\textbf{{Axon growth dynamics.}} Neuroplastic changes occur during the tumorigenesis and are closely linked to the presence of pre-cancerous and cancerous cells. Modeling innervation with a logistic law seems the natural way to describe this phenomenon if no spatial representation is taken into account. However, we do not take constant growth rates since the innervation is clearly induced by precancerous and cancerous cells.\\ 
We assume from the experimental data that the growth rate of autonomic axons is increased by ADM and PanIN, whereas PDAC has an opposite effect. The coefficients $\alpha_1$, $\alpha_2$ and $ \alpha_3$ are associated respectively to $Q_1,$ $Q_2$ and $Q_3$ in the growth term in order to specify the effect of each cells on the innervation. Also, the PanIN and PDAC cells promote sensory axon growth in a similar way with coefficients $\bar{\alpha}_2 $ and {$\bar{\alpha}_3 $}. One can reasonably consider that growth in a biological phenomenon is saturated because of various biophysical constraints. We introduced the threshold $\tau_{A_2}$ which is an upper bound for the density of sensory axons in the system. Similarly, we denote $\tau_{A_1}$ the threshold on autonomic axons. However, $A_1$ is not a density but a difference quantity and this quantity is non-monotonous throughout the PDAC development process. Hence, we model the dynamic of $A_1$ by a modified logistic growth where the variable is bounded in $\left[-\tau_{A_1},\;\tau_{A_1}\right]$. If its growth term is non-negative, $A_1$ is tending to its upper bound and if the reverse is true, $A_1$ tends to its lower bound. To conclude, the growth dynamics of the axons are describe by the following equations
\begin{align}
\begin{split}
\frac{d}{dt} A_1(t) = &\underbrace{\left(\alpha_{1}Q_1(t)+\alpha_2 Q_2(t)-\alpha_3 Q_3(t) \right)}_\text{\parbox{4.5cm}{stimulus effect from ADM and PanIN and inhibiting effect from PDAC}} \underbrace{\left(1+\tfrac{A_1(t)}{\tau_{A_1}} \right)\left(1-\tfrac{A_1(t)}{\tau_{A_1}} \right)}_\text{logistic-like growth}
\end{split}
\\[2ex]
\begin{split}
\frac{d}{dt} A_2(t)  = &\underbrace{\left(\bar{\alpha}_2 Q_2(t) +\bar{\alpha}_3 Q_3(t)\right)}_\text{\parbox{4.5cm}{stimulus effect from PanIN and PDAC}} \underbrace{A_2(t)\left(1 - \tfrac{A_2(t)}{\tau_{A_2}} \right)}_\text{logistic growth}\label{eq:Mod_A2}
\end{split}
\end{align}

\textbf{A priori conditions and assumptions on parameters.} The interaction between cells and axons and the transition between cell populations are modeled by a dynamical system driven by the set of parameters $\left\lbrace \pi,\; \delta,\; \beta,\; \tau,\; \gamma,\; \alpha \right\rbrace $. In order to sum up, we recall that the \textit{transition rates} are denoted by $\pi$, the \textit{saturation rates}  by $\tau$ , the \textit{growth rates} by $\gamma$. The parameters $\beta$ and $\delta$ appear in the transition terms. The $\beta$ parameters are coefficients which translate the inhibiting effect on the transition rates whereas the $\delta$ parameters translate a stimulating effect on the transition rates. The last category of parameters is assimilated to the \textit{growth term} of the axons. The parameters $\alpha$ and $\bar{\alpha}$ are associated to the impact of the cell populations on the growth rate of axons. All these parameters are assumed to be non-negative. In the following, we assume the hypotheses :\\

\textbf{Hypothesis 1 :} the transfer terms cannot become negative, it implies the following sufficient conditions : 
\begin{equation}\label{hyp:first}
\tag{H1}
    1>\beta_1 \tau_{A_1}, \quad  1>\beta_2\tau_{A_1}.
\end{equation}

\textbf{Hypothesis 2 :} the growth rate of the cells in the logistic law gives rise to a competition between the PanIN cells and the PDAC cells for the same resource. However, the PDAC cells are considered to be dominant in the system and in the reality. It is translated by the following order relation on the parameters : 
\begin{equation}\label{hyp:second}
\tag{H2}
    \gamma_2 <\gamma_3 .
\end{equation}

\textbf{Hypothesis 3 :} the autonomic axons $A_1$ have a mixed effect on the proliferation term of PanIN and PDAC: it can either increase or decrease the resource in the logistic law. It is therefore unrealistic to consider that this mixed effect is the one that governs the dynamics of growth. This implies that the proliferation term cannot be of negative sign in the equations \eqref{eq:Mod_Q2} and \eqref{eq:Mod_Q3}. The following assumptions is assumed :
\begin{equation}\label{hyp:third}
\tag{H3}
   \tau_{A_1} < \tau_{A_1}^C.
\end{equation}
\medskip

\textbf{Initial conditions.}  We assume that at time 0, there are only Acini, autonomic axons and a very little amount of sensory axons. Hence, we have the following initial conditions :
\begin{equation}
Q_0(0) >0,\quad Q_1(0)=Q_2(0)=Q_3(0)=0,\quad A_1(0)=0,\quad 0<A_2(0) \ll 1.
\label{eq:init_cond}
\end{equation}
\medskip

Equations \eqref{eq:Mod_Q0}-\eqref{eq:Mod_A2} form a non-linear dynamical system. The mathematical analysis of the system, such as the well-posedness, the positivity and the long-term behavior, assesses  theoretically the legitimacy of modeling choices and improves the understanding of the interaction between axons and cancer.

\section{Properties of the model: well-posedness and asymptotic behavior.}\label{sec:model}
For the sake of clarity, we introduce an abstract formulation of the equations \eqref{eq:Mod_Q0}-\eqref{eq:Mod_A2}. The main change is that the transition terms are now represented by the nonnegative functions $f_i$ for $i=0,\;1,\; 2$. Hence, we obtain the following system :
\begin{equation}\label{eq:sys_abs}
\left\lbrace
\begin{array}{c@{}l}
\frac{d}{dt} Q_0= &-f_0\left(Q_2,Q_3\right) Q_0 \\
\\
\frac{d}{dt} Q_1 = &f_0\left(Q_2,Q_3\right) Q_0 -f_1\left(A_1 \right)Q_1\\
\\
\frac{d}{dt} Q_2 = &\gamma_{2}Q_2\left(1 -\tfrac{Q_2+Q_3}{\tau_{C}}+ \tfrac{A_1}{\tau_{A_1}^C}+\tfrac{A_2}{\tau_{A_2}^C}\right) +f_1\left(A_1\right) Q_1-f_2\left(A_1,A_2\right) Q_2\\
\\
\frac{d}{dt} Q_3 = &\gamma_{3} Q_3\left(1 -\tfrac{Q_2+Q_3}{\tau_{C}}+\tfrac{A_1}{\tau_{A_1}^C}+\tfrac{A_2}{\tau_{A_2}^C}\right) + f_2\left(A_1,A_2\right) Q_2\\
\\
\frac{d}{dt} A_1 = &\left(\alpha_{1}Q_1+\alpha_2 Q_2-\alpha_3 Q_3 \right)\left(1+\tfrac{A_1}{\tau_{A_1}} \right)\left(1-\tfrac{A_1}{\tau_{A_1}} \right)\\
\\
\frac{d}{dt} A_2  = &(\bar{\alpha}_2 Q_2 +\bar{\alpha}_3 Q_3)A_2\left(1 - \tfrac{A_2}{\tau_{A_2}} \right)
\end{array}
\right.
\end{equation}
The first step in understanding the interactions between axons and the cancer progression is to establish properties of the model such as  positiveness, well-posedness, etc.  and to study its asymptotic behavior.

\subsection{Properties of the system \eqref{eq:sys_abs}}
In the following section, we establish preliminary results on the model. Besides the well-posedness and the global existence of the solution, the cells populations are nonnegative. Moreover, the healthy cells populations (Acini and ADM) vanish at equilibrium. These results comfort the modeling since the simplistic but realistic outcome of the pancreatic cancer is the proliferation PDAC cells in the pancreas to the detriment of healthy cells.  
\begin{proposition}
\label{prop_pos}
Let $T>0$ be arbitrary and consider $X=\left(Q_0, Q_1, Q_2, Q_3, A_1, A_2\right)$ and $X^0 =\left(Q_0(0), Q_1(0), Q_2(0), Q_3(0), A_1(0), A_2(0)\right)$ such that $$Q_i(0) \geq 0,\quad \text{for}\quad i=0,1,2,3$$ and $$-\tau_{A_1} \leq A_1(0) \leq \tau_{A_1}\quad \text{and}\quad 0\leq A_2(0) \leq \tau_{A_2}.$$ 

Moreover assume that 
$$ \forall (x,y) \in \mathbb{R}^2\quad \exists \; M_0>m_0> 0 \implies m_0 \leq f_0(x,y) \leq M_0,$$
$$ \exists L_0 >0 \quad \text{ such that } \quad \forall u\in \mathbb{R}^2,\; \forall v\in \mathbb{R}^2 \quad \| f_0(u)- f_0(v)\| \leq L_0 \|u -v\| ,$$
and
$$ \forall x \in \mathbb{R} \quad \exists \; M_1>m_1> 0 \implies m_1 \leq f_1(x) \leq M_1,$$
$$ \exists L_1 >0 \quad \text{ such that } \quad \forall (u,v) \in \mathbb{R}^2 \quad \| f_1(u)- f_1(v)\| \leq L_1 \|u -v\| ,$$
and
$$ \forall (x,y) \in \mathbb{R}^2\quad  \exists \; M_2>m_2> 0 \implies m_2 \leq f_2(x,y) \leq M_2,$$
$$ \exists L_2 >0 \quad \text{ such that } \quad \forall u\in \mathbb{R}^2,\; \forall v\in \mathbb{R}^2  \quad \| f_2(u)- f_2(v)\| \leq L_2 \|u -v\|.$$

Then there exists a unique maximal solution for the system \eqref{eq:sys_abs} with the initial condition $X^0$ on $I=[0, T)$.
The following properties holds:\\
\begin{itemize}
\item \textbf{Nonnegativity} $\qquad \forall t\in I \quad Q_i(t) \geq 0,\quad \text{for}\quad i=0,1,2,3,$
\item \textbf{Boundedness of the axons}  $\qquad \forall t\in I \quad -\tau_{A_1} \leq A_1(t) \leq \tau_{A_1}\quad \text{and}\quad 0\leq A_2(t) \leq \tau_{A_2}.$
\end{itemize}

\end{proposition}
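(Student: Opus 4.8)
The plan is to split the statement into three parts and carry them out in order: (i) local existence and uniqueness from Cauchy--Lipschitz; (ii) the nonnegativity and the axon bounds, which I will establish on the maximal interval of existence by elementary scalar comparison arguments; (iii) a bootstrap showing that the maximal interval is in fact $[0,+\infty)$, so that in particular the solution exists on the prescribed $[0,T)$ and all the listed properties hold there.

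For (i) I would view the right-hand side of \eqref{eq:sys_abs} as a vector field $F:\mathbb{R}^6\to\mathbb{R}^6$ and observe that each component is a finite sum of products of the coordinate maps $Q_i,A_j$ — which are polynomial, hence locally Lipschitz — with one of $f_0,f_1,f_2$, which by hypothesis are globally Lipschitz and bounded. A product of a bounded globally Lipschitz map with a locally Lipschitz map is locally Lipschitz, so $F$ is locally Lipschitz on $\mathbb{R}^6$, and Picard--Lindel\"of gives a unique maximal solution $X\in C^1([0,T_{\max}),\mathbb{R}^6)$ with $X(0)=X^0$, for some $T_{\max}\in(0,+\infty]$.

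For (ii), on any compact subinterval of $[0,T_{\max})$ the coordinates are continuous, so all the coefficients appearing below are continuous, and I will repeatedly use two facts: a scalar ODE $\dot y=c(t)y$ with $c$ continuous has solution $y(t)=y(0)\exp(\int_0^t c)$, which keeps the sign of $y(0)$; and $\dot y=g(t)y+b(t)$ with $g$ continuous, $b\ge 0$, $y(0)\ge 0$ forces $y\ge 0$ by Duhamel. First, $\dot Q_0=-f_0(Q_2,Q_3)Q_0$ gives $Q_0(t)=Q_0(0)\exp(-\int_0^t f_0)\in[0,Q_0(0)]$. Then I treat $Q_1,Q_2,Q_3$ in this cascade order: each satisfies $\dot Q_i=g_i(t)Q_i+b_i(t)$ with source $b_i$ equal respectively to $f_0(Q_2,Q_3)Q_0$, $f_1(A_1)Q_1$, $f_2(A_1,A_2)Q_2$, and since $f_0,f_1,f_2\ge m_0,m_1,m_2>0$ while the preceding $Q$ is already known nonnegative, $b_i\ge 0$, whence $Q_i\ge 0$. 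For the axons I note that each of $\tau_{A_1}-A_1$, $\tau_{A_1}+A_1$, $A_2$, $\tau_{A_2}-A_2$ satisfies a scalar linear homogeneous ODE along the trajectory — e.g. $\tfrac{d}{dt}(\tau_{A_1}-A_1)=-\tau_{A_1}^{-1}(\alpha_1Q_1+\alpha_2Q_2-\alpha_3Q_3)(1+A_1/\tau_{A_1})(\tau_{A_1}-A_1)$, and for $A_2$ the prefactor $\bar\alpha_2Q_2+\bar\alpha_3Q_3$ is nonnegative — so each keeps its nonnegative initial sign, which is exactly $-\tau_{A_1}\le A_1\le\tau_{A_1}$ and $0\le A_2\le\tau_{A_2}$.

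Finally, for (iii) I would show $X$ stays bounded on $[0,T_{\max})$. The axons are already bounded; $Q_0\in[0,Q_0(0)]$; from $\dot Q_1\le M_0Q_0(0)-m_1Q_1$ one gets $Q_1\le\max(Q_1(0),M_0Q_0(0)/m_1)$; and for $S:=Q_2+Q_3$ the transfer term $f_2(A_1,A_2)Q_2$ cancels when the two equations are added, leaving $\dot S=(\gamma_2Q_2+\gamma_3Q_3)\big(1-\tfrac{S}{\tau_C}+\tfrac{A_1}{\tau_{A_1}^C}+\tfrac{A_2}{\tau_{A_2}^C}\big)+f_1(A_1)Q_1$; bounding the bracket via the axon estimates and using $\gamma_2Q_2+\gamma_3Q_3\le\gamma_3S$ when the bracket is nonnegative and $\ge\gamma_2S$ when it is nonpositive (here (H2) is convenient), one gets $\dot S\le -c_1 S+c_2$ once $S$ is large, hence an a priori bound on $S$ and thus on $Q_2,Q_3$. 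By the blow-up alternative this forces $T_{\max}=+\infty$. The only step needing genuine care is this last a priori bound on $Q_2+Q_3$: one must spot the cancellation of the transfer term and then dominate the logistic right-hand side, whose bracket need not be negative, by feeding in the axon bounds (this is where positivity of the thresholds and the ordering $\gamma_2\le\gamma_3$ make the estimate clean); everything else is bookkeeping around the two scalar comparison lemmas, which is precisely why the $Q_i$ must be handled in the order $Q_0\to Q_1\to Q_2\to Q_3$.
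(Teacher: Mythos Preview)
Your proof is correct and, for local well-posedness and the invariance properties (your parts (i) and (ii)), follows exactly the same line as the paper: Cauchy--Lipschitz from the Lipschitz assumptions on the $f_i$, then sign preservation for $Q_0$ and for the axon barriers via the linear-homogeneous structure, and Gronwall/Duhamel in cascade for $Q_1,Q_2,Q_3$.

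The one genuine difference is your part (iii). The paper's proof of this proposition stops at local existence plus invariance and defers the a priori bounds and global existence to the next proposition; there it bounds $Q_2$ first on its own (logistic upper bound plus an exponentially decaying source from $Q_1$), and then uses that bound to control $Q_3$ separately. You instead work with the sum $S=Q_2+Q_3$, exploiting the cancellation of the transfer term $f_2(A_1,A_2)Q_2$ when the two equations are added, and then split cases on the sign of the logistic bracket together with (H2) to get $\dot S\le -c_1 S+c_2$ for large $S$. Both routes are valid; yours is slightly slicker because it avoids the two-step bootstrap and does not need the exponential decay of $Q_1$ (a uniform bound on $Q_1$ suffices), while the paper's sequential bound gives a bit more, namely an explicit estimate on $Q_2$ alone that feeds into later arguments.
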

\begin{proof}

 The well-posedness is a direct consequence of the Cauchy-Lipschitz theorem and the Lipschitz bound on $f_i$. Moreover,
 $$\frac{d}{dt} Q_0(t) = -f_0(Q_2(t), Q_3(t))Q_0(t) = -\psi(t)Q_0(t) $$
 for a positive function $\psi$, then $\lbrace 0 \rbrace$ is an invariant set for $\tfrac{d}{dt}Q_0$ and we have $Q_0(t) \geq 0$.
 Similarly, we obtain that $$\forall t\in I \quad -\tau_{A_1} \leq A_1(t) \leq \tau_{A_1} \quad \text{and} \quad 0\leq A_2(t) \leq \tau_{A_2}.$$
Then, using the boundedness of $f_0$ and $f_1$ and the nonnegativity of $Q_0$, we obtain $$\dt Q_1(t) \geq - M_1 Q_1(t).$$
Hence by Gronwall's lemma, we have $$\forall t>0 \quad 0 \leq Q_1(0) e^{-M_1 t}\leq Q_1(t). $$
Similarly, using the boundedness of $f_1$ and the nonnegativity of $Q_1$, we obtain 
$$\forall t>0, \quad 0 \leq Q_2(0) e^{\bar{\psi}(t)} \leq Q_2(t) ,$$
where $$\bar{\psi}(t) = \int_0^t  \gamma_2 \left(1 -\tfrac{Q_2+Q_3}{\tau_{C}}+ \tfrac{A_1}{\tau_{A_1}^C}+\tfrac{A_2}{\tau_{A_2}^C}\right)-M_2ds. $$
Again, using the boundedness of $f_2$ and the nonnegativity of $Q_2$, we obtain 
$$\forall t>0, \quad 0 \leq Q_3(0) e^{\tilde{\psi}(t)} \leq Q_3(t) ,$$
where $$\tilde{\psi}(t) = \int_0^t  \gamma_3 \left(1 -\tfrac{Q_2+Q_3}{\tau_{C}}+ \tfrac{A_1}{\tau_{A_1}^C}+\tfrac{A_2}{\tau_{A_2}^C}\right)ds. $$
\end{proof}

\begin{proposition}\label{prop:exp_decay}
Suppose the same hypotheses as in Proposition \ref{prop_pos}, then the following properties holds:
\begin{enumerate}
\item \textbf{Exponential decays} The solutions $Q_0$ and $Q_1$ decay exponentially fast toward $0$ as $t$ tends to infinity. Moreover,
$$ \forall t\in I \quad Q_0(0) e^{-M_0 t} \leq Q_0(t) \leq Q_0(0) e^{-m_0 t},$$
and it exists a constant $C(Q_0(0), Q_1(0))\geq 0$ such that 
$$\forall t\in I \quad 0\leq Q_1(t) \leq C(Q_0(0), Q_1(0))e^{-(m_1 -o(1))t}.$$
\item \textbf{Global existence} There exists a unique solution for the system \eqref{eq:sys_abs} with initial condition $X^0$ on $\R_+$
\end{enumerate}
\end{proposition}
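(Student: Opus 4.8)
The plan is to read the first four equations of \eqref{eq:sys_abs} as a triangular cascade of scalar linear ODEs with nonnegative bounded coefficients, and to exploit that the ``source'' feeding $Q_1$ is itself exponentially small. First, for $Q_0$: since $\dt Q_0=-f_0(Q_2,Q_3)Q_0$ with $m_0\le f_0\le M_0$, when $Q_0(0)>0$ the map $t\mapsto\ln Q_0(t)$ satisfies $\dt\ln Q_0(t)=-f_0(Q_2(t),Q_3(t))\in[-M_0,-m_0]$; integrating on $[0,t]$ gives $-M_0t\le\ln\bigl(Q_0(t)/Q_0(0)\bigr)\le -m_0t$, which is exactly the claimed two-sided bound (and $Q_0(0)=0$ forces $Q_0\equiv0$, which is trivial). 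In particular $Q_0(t)\le Q_0(0)e^{-m_0t}\to0$.

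Next, for $Q_1$: nonnegativity is already in Proposition \ref{prop_pos}. For the upper bound, combine $f_0\le M_0$, $f_1\ge m_1$, $Q_0,Q_1\ge0$ and the previous step to get $\dt Q_1\le M_0Q_0(t)-m_1Q_1(t)\le M_0Q_0(0)e^{-m_0t}-m_1Q_1(t)$. Multiplying by the integrating factor $e^{m_1t}$ and integrating yields
$$Q_1(t)\le Q_1(0)e^{-m_1t}+M_0Q_0(0)\int_0^t e^{-m_1(t-s)}e^{-m_0s}\,ds.$$
When $m_0>m_1$ the integral is bounded by $1/(m_0-m_1)$ and $Q_1(t)\le Ce^{-m_1t}$ with $C=Q_1(0)+M_0Q_0(0)/(m_0-m_1)$, so the rate is exactly $m_1$. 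In the resonant case $m_0=m_1$ the integral equals $t$, so $Q_1(t)\le\bigl(Q_1(0)+M_0Q_0(0)t\bigr)e^{-m_1t}$, and the polynomial prefactor is traded for an arbitrarily small loss in the rate, $Q_1(t)\le C_\varepsilon e^{-(m_1-\varepsilon)t}$ for every $\varepsilon>0$ — this is the meaning of the exponent $m_1-o(1)$. In all cases $Q_1(t)\to0$ exponentially with a constant depending only on $Q_0(0)$ and $Q_1(0)$.

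For the global existence, Proposition \ref{prop_pos} already confines $Q_0\in[0,Q_0(0)]$, $A_1\in[-\tau_{A_1},\tau_{A_1}]$, $A_2\in[0,\tau_{A_2}]$, and the estimate above bounds $Q_1$ uniformly on $\R_+$; it only remains to rule out a finite-time blow-up of $Q_2,Q_3$ (both nonnegative, again by Proposition \ref{prop_pos}). Put $\Sigma=Q_2+Q_3\ge0$ and add the third and fourth equations of \eqref{eq:sys_abs}: the transfer term $\pm f_2(A_1,A_2)Q_2$ cancels, giving
$$\dt\Sigma=(\gamma_2Q_2+\gamma_3Q_3)\Big(1-\tfrac{\Sigma}{\tau_C}+\tfrac{A_1}{\tau_{A_1}^C}+\tfrac{A_2}{\tau_{A_2}^C}\Big)+f_1(A_1)Q_1.$$
Bounding $A_1\le\tau_{A_1}$ and using \eqref{hyp:third} (i.e. $\tau_{A_1}<\tau_{A_1}^C$), $A_2\le\tau_{A_2}$ and $f_1\le M_1$, the bracket is at most $K_0-\Sigma/\tau_C$ with $K_0:=1+\tau_{A_1}/\tau_{A_1}^C+\tau_{A_2}/\tau_{A_2}^C$, hence $\dt\Sigma\le\gamma_3\Sigma(K_0-\Sigma/\tau_C)+M_1\sup_{s\ge0}Q_1(s)$. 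The right-hand side is logistic in $\Sigma$ plus a bounded forcing term, so comparison with the scalar equation $\dot z=\gamma_3z(K_0-z/\tau_C)+c$ keeps $\Sigma$ — and therefore $Q_2$ and $Q_3$ — bounded on $\R_+$. Thus the trajectory stays in a fixed compact subset of $\R^6$; since the vector field of \eqref{eq:sys_abs} is locally Lipschitz (the $f_i$ are Lipschitz, the rest is polynomial), the blow-up alternative forces the maximal solution to be defined on all of $\R_+$, and uniqueness is again Cauchy--Lipschitz.

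The only genuinely delicate point is matching the decay rate of $Q_1$ to $m_1-o(1)$: the scalar estimate delivers rate $\min(m_0,m_1)$ directly, so to obtain a rate arbitrarily close to $m_1$ one must argue that the Acini source $f_0(Q_2,Q_3)Q_0$ is eventually negligible relative to $e^{-m_1t}$, which holds as soon as the effective transfer rate of $Q_0$ stays asymptotically above $m_1$ (for instance $m_0\ge m_1$, or the Michaelis--Menten saturation $f_0\to M_0$ together with $M_0\ge m_1$), the resonant situation being what produces the harmless $o(1)$ correction. Everything else is routine Gronwall-type bookkeeping.
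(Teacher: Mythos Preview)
Your treatment of the exponential decay of $Q_0$ and $Q_1$ is essentially identical to the paper's: the same Gronwall/integrating-factor bound, the same case split on $m_0$ versus $m_1$, and the same resonant case producing the $o(1)$ loss. Your closing observation is in fact sharp: when $m_0<m_1$ the scalar estimate only delivers rate $m_0$, not $m_1-o(1)$, and the paper's own computation in the case $m_1\neq m_0$ also lands on $\underline m=\min(m_0,m_1)$ --- so the stated exponent $m_1-o(1)$ is accurate only under the tacit hypothesis $m_0\ge m_1$, exactly as you diagnose.

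For global existence you take a genuinely different route. The paper bounds $Q_2$ first (dropping $Q_3\ge 0$ from the logistic and using that $x\mapsto ax(b-x)$ is bounded above on $\R_+$), and then feeds the resulting bound $C_2(T)$ into the $Q_3$ equation. You instead pass to the sum $\Sigma=Q_2+Q_3$, which makes the transfer term $\pm f_2Q_2$ cancel and yields a single logistic-with-forcing inequality; this is neater and gives a uniform bound directly rather than a $T$-dependent one. One small slip: the comparison $\dt\Sigma\le\gamma_3\Sigma(K_0-\Sigma/\tau_C)+c$ is not literally correct when the bracket is negative, because then $(\gamma_2Q_2+\gamma_3Q_3)B\le\gamma_2\Sigma B$ (the coefficient drops to $\gamma_2$, and $\gamma_2\Sigma(K_0-\Sigma/\tau_C)>\gamma_3\Sigma(K_0-\Sigma/\tau_C)$ there). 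The repair is immediate --- use $\gamma_2$ in the absorbing regime, or simply observe that $\dt\Sigma<0$ once $\Sigma$ exceeds a fixed threshold --- and your boundedness conclusion stands.
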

\medskip

\begin{proof}

1.\textit{(Exponential decays)} Using the Gronwall's lemma, we get the following from the boudedness of $f_0$ 
$$ \forall t>0 \quad Q_0(0) e^{-M_0 t} \leq Q_0(t) \leq Q_0(0) e^{-m_0 t}.$$
From the proof of nonnegativity, we obtain a lower bound on $Q_1$ which decays exponentially fast 
$$\forall t>0\quad  Q_1(0) e^{-M_1 t}\leq Q_1(t). $$
We are now interested in the upper bound of $Q_1$. Using the bound on $f_1$ and the Gronwall's lemma, we obtain 
\begin{align*}
\dt Q_1 (t) & \leq { M_0} e^{-m_0 t}Q_0(0) -m_1 Q_1(t),\\
Q_1(t) & \leq e^{-m_1 t}Q_1(0) + e^{-m_1 t} M_0 Q_0(0) \int_0^t e^{(m_1 -m_0)s} ds.
\end{align*}
We first consider the case where $m_1 = m_0$, thus we get 
\begin{align*}
Q_1(t) & \leq e^{-m_1 t}Q_1(0) + e^{-m_1 t} M_0 Q_0(0) t,\\
Q_1(t) & \leq e^{-m_1 t}Q_1(0) + e^{-m_1 t + ln(t)} M_0 Q_0(0),\\
Q_1(t) & \leq \left(e^{-ln(t)}Q_1(0) +  M_0 Q_0(0) \right)e^{-m_1 t + ln(t)}.\\
\end{align*}
We finally get 
$$ Q_1(t)\leq \big[Q_1(0)o(1) + M_0 Q_0(0) \big]e^{(-m_1 +o(1))t}.$$
Let us now consider the case where $m_1 \ne m_0$. 
\begin{align*}
Q_1(t) & \leq e^{-m_1 t}Q_1(0) + e^{-m_1 t} M_0 Q_0(0) \frac{e^{(m_1 - m_0)t -1}}{m_1-m_0},\\
Q_1(t) & \leq e^{-m_1 t}Q_1(0) + M_0 Q_0(0)\frac{e^{-m_0 t}- e^{-m_1 t}}{m_1 -m_0}.\\
\end{align*}
We denote $ \underline{m} = \min (m_0,m_1)$, we also obtain an exponential decay :
$$ Q_1(t) \leq \left[e^{-(m_1 -\underline{m})  t}Q_1(0) + M_0 Q_0(0)\frac{e^{-(m_0 -\underline{m}) t}- e^{-(m_1 -\underline{m}) t}}{m_1 -m_0}\right] e^{-\underline{m}t}.$$
\medskip 

2. \textit{(Global existence)} Now, we prove that $Q_2$ and $Q_3$ are bounded for $t\in I$.
Using the bounds on $A_1, \; A_2,\; f_1, \; f_2$, the positivity of $Q_2,\; Q_3$ and the fact that $Q_1$ is bounded by an exponential function, we get 
\begin{align*}
\dt Q_2(t) & \leq \gamma_2 Q_2(t) \left( C - \tfrac{Q_2(t)}{\tau_C}\right) + M_1 e^{-\underline{m}t} ,
\end{align*} 
where $C = 1 + \tfrac{\tau_{A_1}}{\tau_{A_1}^C} + \tfrac{\tau_{A_2}}{\tau_{A_2}^C}.$ Moreover, the function $f : x \mapsto ax(b-x)$ with $a>0$ and $b\in \mathbb{R}$ is uniformly bounded from above for $x\in \mathbb{R}^{+}$, hence we obtain that $Q_2$ is bounded for $t\in I$. Similarly, using the bounds on $A_1, \; A_2, \; f_2$, the positivity of $Q_2,\; Q_3$ and the fact that $Q_2$ is bounded by a constant denoted $C_2(T)$ on $[0,T]$, we get 
\begin{align*}
\dt Q_3(t)\leq \gamma_3 Q_3(t) \left( C - \tfrac{ Q_3(t)}{\tau_C} \right) +M_2 C_2(T),
\end{align*}
and that $Q_3$ is bounded for $t\in I$. Then the solution is global. 
\end{proof}

\subsection{Limit system and asymptotic behavior}
\subsubsection{Exponential convergence of the sensory axons}
As stated in Proposition \ref{prop_pos}, the Acini and ADM populations are reduced exponentially. It implies that once the proliferating PanIN and PDAC cells appears, the competition between the cells populations is in favour of the development of PanIN and PDAC. Moreover, since the growth term of the sensory axons only depends on the PanIN and PDAC, then the sensory axons tend to their threshold $\tau_{A_2}$. This is stated in the following proposition.
\begin{proposition}\label{prop:conv_a2}[Exponential convergence of the sensory axons]\\
Let $Q_0(0) >0$, $Q_1(0) = Q_2(0) = Q_3(0) =0$, $ A_1(0) \in (-\tau_{A_1}, \tau_{A_1})$ and $A_2(0)\in (0, \tau_{A_2})$. We assume \eqref{hyp:third} holds. Then $A_2$ tends exponentially fast to $\tau_{A_2}$ and for $t^*>0$ large enough, there exist a constant $C>0$ and a rate $r>0$ such that 
$$\forall t> t^*, \quad |A_2(t) - \tau_{A_2}| \leq C |A_2(0) - \tau_{A_2}| e^{-rt}.$$
\end{proposition}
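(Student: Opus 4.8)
The plan is to reduce the statement to a scalar Gronwall estimate for $z(t):=\tau_{A_2}-A_2(t)$ and to control the coefficient $g(t):=\bar{\alpha}_2 Q_2(t)+\bar{\alpha}_3 Q_3(t)$ that drives the $A_2$-equation. By Proposition~\ref{prop_pos} one has $0\le A_2\le\tau_{A_2}$ on $\R_+$, so the last line of \eqref{eq:sys_abs} reads $\frac{d}{dt}z=-\frac{A_2(t)}{\tau_{A_2}}\,g(t)\,z$ with $g\ge 0$ and $A_2\ge 0$; hence $z$ is non-increasing, $A_2$ is non-decreasing, and since $A_2(0)>0$ we get $A_2(t)\ge A_2(0)>0$ for all $t$. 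In particular $A_2$ already converges to some $\ell\in(0,\tau_{A_2}]$, and it remains to prove $\ell=\tau_{A_2}$ with an exponential rate, for which it suffices to bound $g$ away from $0$ for large $t$.

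The core of the argument — and the step I expect to be the main obstacle — is a persistence estimate: there exist $\varepsilon_0>0$ and $t_0>0$ with $w(t):=Q_2(t)+Q_3(t)\ge\varepsilon_0$ for all $t\ge t_0$. Summing the $Q_2$- and $Q_3$-equations the transfer term $f_2Q_2$ cancels and, since the logistic bracket is identical in both,
\[
\frac{d}{dt}w=\big(\gamma_2Q_2+\gamma_3Q_3\big)\Big(1-\tfrac{w}{\tau_C}+\tfrac{A_1}{\tau_{A_1}^C}+\tfrac{A_2}{\tau_{A_2}^C}\Big)+f_1(A_1)\,Q_1.
\]
Using $A_1\ge-\tau_{A_1}$, $A_2\ge 0$ and \eqref{hyp:third}, the parenthesis is $\ge c_0-w/\tau_C$ with $c_0:=1-\tau_{A_1}/\tau_{A_1}^C\in(0,1)$; by \eqref{hyp:second} and nonnegativity $\gamma_2Q_2+\gamma_3Q_3\ge\gamma_2 w$; and $f_1(A_1)Q_1\ge 0$. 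Setting $\varepsilon_0:=c_0\tau_C/2$, on $\{0<w\le\varepsilon_0\}$ this yields $\frac{d}{dt}w\ge\frac{\gamma_2 c_0}{2}\,w>0$. I would then use this in two ways. First, whenever $w(t)=\varepsilon_0$ we have $\frac{d}{dt}w(t)\ge\frac{\gamma_2 c_0}{2}\varepsilon_0>0$, so the level $\{w=\varepsilon_0\}$ acts as a one-sided barrier: once $w$ reaches it, $w$ stays $\ge\varepsilon_0$. Second, as long as $0<w\le\varepsilon_0$, Gronwall gives $w(t)\ge w(t_1)e^{\gamma_2 c_0(t-t_1)/2}$, which together with the boundedness of $w$ from Proposition~\ref{prop:exp_decay} forces $w$ to reach $\varepsilon_0$ in finite time — provided $w>0$ at some positive time. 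The latter holds because $Q_0(t)\ge Q_0(0)e^{-M_0t}>0$ and $f_0\ge m_0>0$ make $Q_1(t)>0$ for every $t>0$ (variation of constants with $Q_1(0)=0$), and then at any instant where $w=0$ one has $\frac{d}{dt}w=f_1(A_1)Q_1>0$, incompatible with $w\ge 0$, $w(0)=0$ unless $w(t)>0$ for all $t>0$.

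Granting the persistence estimate, the conclusion is routine. For $t\ge t_0$, $g(t)\ge\min(\bar{\alpha}_2,\bar{\alpha}_3)\,\varepsilon_0=:2\kappa>0$ (using $\bar{\alpha}_2,\bar{\alpha}_3>0$), and combined with $A_2(t)\ge A_2(0)>0$ this gives $\frac{d}{dt}z\le-\frac{2\kappa A_2(0)}{\tau_{A_2}}\,z=:-r\,z$ on $[t_0,\infty)$. Gronwall yields $z(t)\le z(t_0)e^{-r(t-t_0)}$, and since $z$ is non-increasing on $\R_+$, $z(t_0)\le z(0)=|A_2(0)-\tau_{A_2}|$; hence $|A_2(t)-\tau_{A_2}|\le e^{rt_0}|A_2(0)-\tau_{A_2}|e^{-rt}$ for $t\ge t_0$, which is the statement with $t^{*}=t_0$ and $C=e^{rt_0}$.

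The only genuinely delicate point is Step 2: one must rule out that $Q_2+Q_3$ collapses back to $0$ after the precancerous cells have appeared. The sign hypothesis \eqref{hyp:third} is precisely what keeps the logistic bracket uniformly positive while $w$ is small, turning $\{w=\varepsilon_0\}$ into a barrier; everything else is comparison and Gronwall. (If only one of $\bar{\alpha}_2,\bar{\alpha}_3$ were positive, the final step would instead require a separate lower bound on $Q_2$, resp.\ $Q_3$, obtained from the transfer term $f_2Q_2\ge m_2 Q_2$ once $w\ge\varepsilon_0$.)
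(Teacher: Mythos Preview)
Your proof is correct and follows essentially the same route as the paper: the paper isolates the persistence estimate $Q_2+Q_3\ge c_y>0$ for large $t$ as a separate lemma (Lemma~\ref{lem:bnd_y}, proved by the same differential inequality for $w=Q_2+Q_3$ using \eqref{hyp:third} to keep the logistic bracket positive when $w$ is small), and then applies the identical Gronwall argument to $|A_2-\tau_{A_2}|$ via $A_2(t)\ge A_2(0)$ and $\bar\alpha_2 Q_2+\bar\alpha_3 Q_3\ge\min(\bar\alpha_2,\bar\alpha_3)\,w$. Your inline treatment of the persistence step is in fact slightly more detailed than the paper's (you make the barrier at $\varepsilon_0=c_0\tau_C/2$ and the finite reaching time explicit), but the underlying ideas are the same.
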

\begin{proof}\\
Since the growth term of the sensory axons depends on the quantity of proliferating cells, the bounds on the cells populations $Q_2$ and $Q_3$ are one of the main information in order to deduce the exponential convergence of $A_2$. The following inequality gives us the bounds on the proliferating cells (PanIN and PDAC). Hence, it exists $t^* >0$ and two constants $0<c_y<C_y$ such that
$$\forall t> t^*, \quad c_y \leq Q_2(t) +Q_3(t) \leq C_y. $$
This results is detailed in Lemma \ref{lem:bnd_y} and its proof is postponed on the appendix for the sake of clarity. \\
We recall that 

\begin{align*}
\frac{d}{dt}| A_2(t)- \tau_{A_2}|  & = \text{sign}(A_2(t) - \tau_{A_2})(\bar{\alpha}_2 Q_2(t) +\bar{\alpha}_3 Q_3(t))A_2(t)\left(1 - \tfrac{A_2(t)}{\tau_{A_2}} \right) \\
&=- (\bar{\alpha}_2 Q_2(t) +\bar{\alpha}_3 Q_3(t))\frac{A_2(t)}{\tau_{A_2}}| A_2(t)-\tau_{A_2}| ,\\
 & \leq -\min(\bar{\alpha}_2, \bar{\alpha}_3)(Q_2(t) + Q_3(t))\frac{A_2(0)}{\tau_{A_2}}| A_2(t)-\tau_{A_2}|.
\end{align*}
The last inequality stands since for all time $t>0$ $A_2(t) \in (0, \tau_{A_2})$ (Proposition \ref{prop_pos}) and one can easily verify that for $A_2$ cannot decrease for $A_2(0) \in (0, \tau_{A_2})$.
We denote $y(t) = Q_1(t)+ Q_2(t)$ and we obtain by using its lower bound $c_y$ (Lemma \ref{lem:bnd_y}) that for $t^*>0$ large enough and for all $t > t^*$
\begin{align*}
\frac{d}{dt}| A_2(t)- \tau_{A_2}| & \leq -C(c_y, A_2(0))| A_2(t)- \tau_{A_2}|,
\end{align*}
where $C(c_y, A_2(0))>0$. Finally, we obtain the exponential convergence of $A_2$ toward $\tau_{A_2}$ using the Gronwall's Lemma.
\end{proof}
\subsubsection{Reduced system and its asymptotic behavior}
Under the conditions of Proposition \ref{prop:conv_a2}, it can be shown that the long-time dynamics of the system \eqref{eq:sys_abs} are governed and completely determined by the asymptotic of the following "reduced" system : 
\begin{equation}\label{eq:sys_abs_lim}
\left\lbrace
\begin{array}{c@{}l}
\frac{d}{dt} Q_2(t) = &\gamma_{2}Q_2(t)\left(1 -\tfrac{Q_2(t)+Q_3(t)}{\tau_{C}}+ \tfrac{A_1(t)}{\tau_{A_1}^C}+\tfrac{\tau_{A_2}}{\tau_{A_2}^C}\right) -f_2\left(A_1(t),\tau_{A_2}\right) Q_2(t)\\
\\
\frac{d}{dt} Q_3(t) = &\gamma_{3} Q_3(t)\left(1 -\tfrac{Q_2(t)+Q_3(t)}{\tau_{C}}+\tfrac{A_1(t)}{\tau_{A_1}^C}+\tfrac{\tau_{A_2}}{\tau_{A_2}^C}\right) + f_2\left(A_1(t),\tau_{A_2}\right) Q_2(t)\\
\\
\frac{d}{dt} A_1(t) = &\left(\alpha_2 Q_2(t)-\alpha_3 Q_3(t) \right)\left(1+\tfrac{A_1(t)}{\tau_{A_1}} \right)\left(1-\tfrac{A_1(t)}{\tau_{A_1}} \right)\\
\end{array}
\right.
\end{equation}

The system \eqref{eq:sys_abs_lim} constitutes the starting point in order to study the asymptotic behavior of the model \eqref{eq:Mod_Q0}-\eqref{eq:Mod_A2}. We first focus on the steady states (given by $\tfrac{dQ_2}{dt}=\tfrac{dQ_3}{dt}=\tfrac{dA_2}{dt}=0$) and their local stability.

\begin{proposition}\label{prop: ss}[Steady states of the system \eqref{eq:sys_abs_lim}]\\
Let \eqref{hyp:second} and \eqref{hyp:third} be true. We denote $C(x) = 1 + \frac{x}{\tau_{A_1}^C} +\frac{\tau_{A_2}}{\tau_{A_2}^C}$. Then the admissible steady states of the system are the following:
\begin{itemize}
\item $\qeq = 0$, $ \xeq = 0$, $\aeq = c$ where $c \in \big(-\tau_{A_1};\tau_{A_1} \big)$, which is linearly unstable,
\item $\qeq = 0$, $ \xeq = \tau_C \times C(\tau_{A_1})$, $\aeq = \tau_{A_1}$, which is linearly unstable,
\item $\qeq = 0$, $ \xeq = \tau_C \times C(-\tau_{A_1})$, $\aeq = -\tau_{A_1}$, which is linearly stable.
\end{itemize}
\end{proposition}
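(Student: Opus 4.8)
The plan is to treat \eqref{eq:sys_abs_lim} as a three–dimensional autonomous system in $(Q_2,Q_3,A_1)$ and proceed in two stages: first list every \emph{admissible} equilibrium, i.e.\ every zero of the right–hand side lying in the invariant region $Q_2,Q_3\ge 0$, $A_1\in[-\tau_{A_1},\tau_{A_1}]$ of Proposition \ref{prop_pos}; then linearise at each one and read off the signs of the eigenvalues of the Jacobian. The one structural fact I use throughout is that $C(x)=1+\tfrac{x}{\tau_{A_1}^C}+\tfrac{\tau_{A_2}}{\tau_{A_2}^C}$ is affine increasing, so $\min_{[-\tau_{A_1},\tau_{A_1}]}C=C(-\tau_{A_1})=1-\tfrac{\tau_{A_1}}{\tau_{A_1}^C}+\tfrac{\tau_{A_2}}{\tau_{A_2}^C}>0$ by \eqref{hyp:third}; I also use $f_2\ge m_2>0$.

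\textbf{Enumeration of the steady states.} The $A_1$–equation forces $\alpha_2 Q_2-\alpha_3 Q_3=0$ or the factor $1-A_1^2/\tau_{A_1}^2$ to vanish, i.e.\ $A_1=\pm\tau_{A_1}$. The key reduction is that $Q_2>0$ is impossible: dividing the $Q_2$–equation by $Q_2$ gives $C(A_1)-\tfrac{Q_2+Q_3}{\tau_C}=\tfrac{f_2(A_1,\tau_{A_2})}{\gamma_2}$, and substituting this into the $Q_3$–equation yields $f_2(A_1,\tau_{A_2})\bigl(\tfrac{\gamma_3}{\gamma_2}Q_3+Q_2\bigr)=0$, which contradicts $Q_2>0$, $Q_3\ge 0$, $f_2>0$. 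Hence $Q_2=0$ at every equilibrium and the $Q_3$–equation collapses to $\gamma_3 Q_3\bigl(C(A_1)-\tfrac{Q_3}{\tau_C}\bigr)=0$, so $Q_3=0$ or $Q_3=\tau_C\,C(A_1)$. If $Q_3=0$ the remaining equations hold identically and $A_1$ is free, giving the line $\qeq=\xeq=0$, $\aeq=c$; if $Q_3=\tau_C\,C(A_1)>0$ the $A_1$–equation reduces to $-\alpha_3 Q_3\bigl(1-A_1^2/\tau_{A_1}^2\bigr)=0$, forcing $A_1=\pm\tau_{A_1}$ (here I use $\alpha_3>0$), which produces exactly $\qeq=0$, $\xeq=\tau_C\,C(\pm\tau_{A_1})$, $\aeq=\pm\tau_{A_1}$. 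This is the asserted list.

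\textbf{Linear stability.} Let $J$ denote the Jacobian of \eqref{eq:sys_abs_lim} with variables ordered $(Q_2,Q_3,A_1)$. On the line $\qeq=\xeq=0$, $\aeq=c$, every super-diagonal entry of $J$ carries a factor $Q_2$ or $Q_3$ and hence vanishes, so $J$ is lower triangular with eigenvalues $\gamma_2 C(c)-f_2(c,\tau_{A_2})$, $\gamma_3 C(c)$, $0$; since $\gamma_3 C(c)>0$ there is an unstable direction (the zero eigenvalue being the neutral direction along the line of equilibria), so the line is linearly unstable. At each boundary equilibrium two cancellations occur simultaneously: $Q_2=0$ makes the first row of $J$ equal to $\bigl(-f_2(\aeq,\tau_{A_2}),0,0\bigr)$, contributing the eigenvalue $-f_2<0$ and decoupling that direction, while $1-\aeq^2/\tau_{A_1}^2=0$ together with $C(\aeq)-\xeq/\tau_C=0$ makes the remaining $(Q_3,A_1)$ block upper triangular with diagonal entries $-\gamma_3 C(\aeq)<0$ and $\tfrac{2\alpha_3\tau_C C(\aeq)}{\tau_{A_1}^2}\,\aeq$, whose sign equals that of $\aeq$. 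Consequently the equilibrium with $\aeq=\tau_{A_1}$ has a positive eigenvalue and is linearly unstable, whereas the one with $\aeq=-\tau_{A_1}$ has all three eigenvalues negative and is linearly stable.

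\textbf{Main obstacle.} The only genuinely delicate steps are the sign argument that excludes $Q_2>0$ — this is what shrinks the equilibrium set to the three families — and the bookkeeping of which Jacobian entries vanish at the two degenerate boundary equilibria; once one notices that $Q_2=0$ decouples the $Q_2$–direction and $\aeq=\pm\tau_{A_1}$ triangularises the $(Q_3,A_1)$ block, all eigenvalues are immediate. It is worth flagging that strict positivity $\alpha_3>0$ is used both to close the enumeration and to sign the decisive eigenvalue, while \eqref{hyp:second} enters the classification only through $\gamma_2,\gamma_3>0$, and \eqref{hyp:third} is used repeatedly via $C(\pm\tau_{A_1})>0$.
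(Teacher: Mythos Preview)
Your argument is correct and follows the same two-stage architecture as the paper's proof: first rule out $Q_2^\infty>0$, then enumerate the remaining equilibria and read off eigenvalues from the (triangular) Jacobian. The one noteworthy difference is in the exclusion of $Q_2^\infty>0$. The paper substitutes both $C(\aeq)-\tfrac{\qeq+\xeq}{\tau_C}=\tfrac{f_2^\infty}{\gamma_2}$ and $\qeq=\tau_C\bigl(C(\aeq)-\tfrac{f_2^\infty}{\gamma_2}-\tfrac{\xeq}{\tau_C}\bigr)$ into the $Q_3$-equation, solves explicitly for $\xeq=\tfrac{\gamma_2}{\gamma_2-\gamma_3}\tau_C\bigl(C(\aeq)-\tfrac{f_2^\infty}{\gamma_2}\bigr)$, and then invokes \eqref{hyp:second} together with $C(\aeq)-\tfrac{f_2^\infty}{\gamma_2}>0$ to conclude $\xeq<0$. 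Your single substitution gives directly $f_2^\infty\bigl(\tfrac{\gamma_3}{\gamma_2}\xeq+\qeq\bigr)=0$, which is already a contradiction from mere positivity of $\gamma_2,\gamma_3,f_2$ and nonnegativity of the states --- so you are right that the ordering $\gamma_2<\gamma_3$ is not actually needed at this point. This is a small but genuine simplification over the paper's argument; everything else (the Jacobian structure, the eigenvalue computations, and the stability conclusions) matches the paper exactly.
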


\begin{proof}\\
The condition $\tau_{A_1} <\tau_{A_1}^C$ implies that $C(A_1(t)) = 1 + \frac{A_1(t)}{\tau_{A_1}^C} + \frac{\tau_{A_2}}{\tau_{A_2}^C} >0$ for all time $t\geq 0$. Hence, the steady states must fulfill the following equations: 
$$ \gamma_2 \qeq \left( C(\aeq) - \frac{f_2^\infty}{\gamma_2} - \frac{\qeq +\xeq}{\tau_c} \right)= 0, $$
where $f_2^\infty = f_2(\aeq, \tau_2).$
There are two cases, either $\qeq = 0 $ or $\qeq = \tau_c \left( C(\aeq) - \frac{f_2^\infty}{\gamma_2} - \frac{\xeq}{\tau_C}\right) $. First, we study the case where $\qeq \neq 0$. It implies by the positiveness of the solution (Proposition \ref{prop_pos}) $$ C(\aeq) - \frac{f_2^\infty}{\gamma_2} - \frac{\xeq}{\tau_C} > 0$$ and then $$ C(\aeq) - \frac{f_2^\infty}{\gamma_2} > 0.$$
Hence, we have 
\begin{align*}
0 & = \gamma_3 \xeq \left( C(\aeq)  - \frac{\qeq +\xeq}{\tau_c} \right)  + f_2^\infty \qeq\\
 & = \frac{\gamma_3}{\gamma_2}f_2^\infty \xeq  + f_2^\infty \tau_C \left( C(\aeq) - \frac{f_2^\infty }{\gamma_2} - \frac{\xeq}{\tau_C}\right)
\end{align*}
and then $$\xeq = \frac{\gamma_2}{\gamma_2 - \gamma_3}\tau_C \left( C(\aeq )-\frac{f_2^\infty}{\gamma_2}\right). $$
Since $\gamma_2 < \gamma_3$, we have that $\xeq < 0$ which contradicts the positiveness property. Finally, we obtain that $\qeq =0 $ and the steady states of the Proposition \ref{prop: ss} are then determined by simple computations.\\
Hence, the linearization around the steady states $\qeq=0, \; \xeq$ and $\aeq$ give the following Jacobian matrix:
\begin{align*}
\begin{pmatrix}
\gamma_2(C(\aeq) -\xeq/\tau_C)-f_2^\infty & 0 & 0\\
-\gamma_3 \tfrac{\xeq}{\tau_C} + f_2^\infty & \gamma_3(C(\aeq) -2\xeq/\tau_C) & \gamma_3 \xeq / \tau_{A_1}^C\\
\alpha_2\left(1 - \left(\tfrac{\aeq}{\tau_{A_1}}\right)^2 \right) & -\alpha_3\left(1 - \left(\tfrac{\aeq}{\tau_{A_1}}\right)^2 \right) & 2 \alpha_3 \xeq \tfrac{\aeq}{(\tau_{A_1})^2}
\end{pmatrix}
.
\end{align*}
In the case $Q_2^\infty= \xeq = 0$ and $\aeq =c$, the eigenvalues of the Jacobian matrix are:
$$\lambda_1 = \gamma_2 C(\aeq) - f_2^\infty, \quad \lambda_2 = \gamma_3 C(\aeq), \quad \lambda_3 = 0 .$$
In the last case, since $\xeq = \tau_C C(\aeq)$ and $\aeq = -\tau_{A_1}$, the eigenvalues of the Jacobian matrix are:
$$\lambda_1 =-f_2^\infty , \quad \lambda_2=-\gamma_3C(\aeq)  ,\quad \lambda_3=2\alpha_3\tau_C C(\aeq)\frac{\aeq}{(\tau_{A_1})^2}  .$$ 
We conclude that the only linearly stable steady state is $\qeq= 0,\; \xeq=\tau_C C(-\tau_{A_1}),\; \aeq = -\tau_{A_1}.$
\end{proof}

Now, we study the global behavior of the limit system looking at the trajectories in the vector field. The system \eqref{eq:sys_abs_lim} is a modification of a competitive Lotka-Voltera system \cite{hofbauer1998evolutionary} for $Q_2$ and $Q_3$ coupled to a modified logistic equation for $A_1$. The following result is established.
\begin{theorem}[Long time behavior of the limit system \eqref{eq:sys_abs_lim}]
Let \eqref{hyp:second} and \eqref{hyp:third} be true. Let $Q_2(0) >0$, $Q_3(0) \geq 0$ and $A_1(0) \in \big(-\tau_{A_1} , \tau_{A_1} \big).$\
Then $$X(t) = \big( Q_2(t),\; Q_3(t),\; A_1(t)\big) \longrightarrow X^\infty =\big( 0,\; \tau_C C(-\tau_{A_1}), \; -\tau_{A_1} \big) \quad \text{for } t\rightarrow +\infty $$ 
where $C(-\tau_{A_1}) = 1 - \frac{\tau_{A_1}}{\tau_{A_1}^C} + \frac{\tau_{A_2}}{\tau_{A_2}^C}$.
\label{thm: ltb_lim}
\end{theorem}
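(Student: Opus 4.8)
The plan is to prove the three convergences in a cascade — first $Q_2\to 0$, then $A_1\to-\tau_{A_1}$, then $Q_3\to\tau_C C(-\tau_{A_1})$ — after first observing that $S:=Q_2+Q_3$ is ultimately confined to a compact subinterval of $(0,\infty)$. Adding the $Q_2$- and $Q_3$-equations of \eqref{eq:sys_abs_lim} makes the transfer terms $\pm f_2 Q_2$ cancel, so $\dot S=(\gamma_2 Q_2+\gamma_3 Q_3)(C(A_1)-S/\tau_C)$; since $Q_2(0)>0$ forces $S\ge Q_2>0$, the first factor is strictly positive and $\dot S$ has the sign of $C(A_1)-S/\tau_C$. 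By \eqref{hyp:third} and Proposition \ref{prop_pos} one has $0<C(-\tau_{A_1})\le C(A_1)\le C(\tau_{A_1})$, so $S$ is strictly increasing wherever $S<\tau_C C(-\tau_{A_1})$ and strictly decreasing wherever $S>\tau_C C(\tau_{A_1})$; a standard trapping argument then gives constants $0<c\le\bar C$ with $c\le S(t)\le\bar C$ for all $t\ge 0$ (this is the analogue of Lemma \ref{lem:bnd_y} for the reduced system). In particular $0<Q_3\le\bar C$ for $t>0$.

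The key step is $Q_2\to 0$. For $t>0$ both $Q_2$ and $Q_3$ are strictly positive (variation of constants together with $Q_2(0)>0$ and $f_2\ge m_2>0$ from Proposition \ref{prop_pos}), so $V:=\ln Q_2-\tfrac{\gamma_2}{\gamma_3}\ln Q_3$ is well defined. Differentiating along \eqref{eq:sys_abs_lim}, the common resource term $C(A_1)-S/\tau_C$ cancels exactly and one gets
\[
\dot V \;=\; -\,f_2(A_1,\tau_{A_2})\Bigl(1+\tfrac{\gamma_2}{\gamma_3}\tfrac{Q_2}{Q_3}\Bigr)\;\le\;-m_2\;<\;0,
\]
so $V(t)\to-\infty$. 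Since $Q_3\le\bar C$ and $0<\gamma_2/\gamma_3<1$, we get $Q_2=e^{V}Q_3^{\gamma_2/\gamma_3}\le\max(1,\bar C)\,e^{V}\to 0$ (indeed exponentially fast). Combined with $S\ge c$ this also yields $\liminf Q_3\ge c>0$. This identity — the $\gamma_2<\gamma_3$ competitive‑exclusion mechanism of \eqref{hyp:second} made quantitative — is what makes the rest routine.

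It remains to pass to the limit in the $(A_1,Q_3)$ dynamics. Once $t$ is large enough that $\alpha_2 Q_2<\tfrac{1}{2}\alpha_3 c$, the factor $\alpha_2 Q_2-\alpha_3 Q_3$ is bounded above by $-\tfrac{1}{2}\alpha_3 c<0$ while $1-A_1^2/\tau_{A_1}^2>0$ (by Proposition \ref{prop_pos} and scalar uniqueness the endpoints $\pm\tau_{A_1}$ are never attained in finite time), so $\dot A_1<0$ eventually; hence $A_1$ is ultimately decreasing and converges to some $\ell\in[-\tau_{A_1},\tau_{A_1})$. If $\ell>-\tau_{A_1}$, then $1-A_1^2/\tau_{A_1}^2$ stays bounded below by a positive constant, forcing $\dot A_1\le-\kappa<0$ and $A_1\to-\infty$, a contradiction; thus $\ell=-\tau_{A_1}$ and $C(A_1)\to C(-\tau_{A_1})=:C^{\ast}>0$. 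Finally, re-examining $\dot S=(\gamma_2 Q_2+\gamma_3 Q_3)(C(A_1)-S/\tau_C)$ with $Q_2\to0$ and $C(A_1)\to C^{\ast}$: for each small $\varepsilon>0$, eventually $\dot S<0$ whenever $S>\tau_C(C^{\ast}+\varepsilon)$ and $\dot S>0$ whenever $S<\tau_C(C^{\ast}-\varepsilon)$, so the trapping argument gives $\tau_C(C^{\ast}-\varepsilon)\le\liminf S\le\limsup S\le\tau_C(C^{\ast}+\varepsilon)$; letting $\varepsilon\to0$ yields $S\to\tau_C C^{\ast}$, and therefore $Q_3=S-Q_2\to\tau_C C(-\tau_{A_1})$. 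Collecting the three limits proves the theorem.

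I expect the main obstacle to be precisely the discovery of the Lyapunov-type quantity $V=\ln Q_2-\tfrac{\gamma_2}{\gamma_3}\ln Q_3$ whose derivative is unconditionally negative, uniformly in the a priori unknown behaviour of $A_1$; once this is in hand, everything reduces to elementary monotonicity and comparison arguments. A secondary point requiring care is that the three limits are genuinely coupled — $C(A_1)$ drives the $Q_i$-equations while $Q_2,Q_3$ drive the $A_1$-equation — which is why they must be established in the order $Q_2$, then $A_1$, then $Q_3$, rather than simultaneously.
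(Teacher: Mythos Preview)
Your proof is correct and takes a genuinely different route from the paper's own argument. The paper proceeds by a phase–portrait analysis: it introduces $z=C(A_1)-\tfrac{Q_2+Q_3}{\tau_C}$ and $w=\alpha_2 Q_2-\alpha_3 Q_3$, partitions the state space into six subdomains according to the signs of $z$, $w$ and $z-f_2/\gamma_2$, and then tracks trajectories from region to region (using Lemma~\ref{lem:vect_F} to control the vector field on the separating surfaces) until they are trapped in the region where convergence to $X^\infty$ is direct. Your argument bypasses this case analysis entirely via the Lyapunov-type quantity $V=\ln Q_2-\tfrac{\gamma_2}{\gamma_3}\ln Q_3$, whose derivative $\dot V=-f_2\bigl(1+\tfrac{\gamma_2}{\gamma_3}\tfrac{Q_2}{Q_3}\bigr)\le -m_2$ encodes the competitive exclusion mechanism of \eqref{hyp:second} in one line and forces $Q_2\to 0$ exponentially, after which the $A_1$ and $Q_3$ limits follow by elementary comparison. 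What your approach buys is a shorter, more quantitative proof (you get an explicit exponential rate for $Q_2$, which the paper's region-hopping does not) that avoids the delicate transition arguments between subdomains; what the paper's geometric decomposition buys is a clearer picture of how trajectories move through phase space before reaching the asymptotic regime, which may be useful for interpreting transient dynamics.
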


\begin{proof}\\
First, we introduce the following functions:
\begin{itemize}
\item $ z(A_1, Q_2, Q_3) = C(A_1) - \frac{Q_2 +Q_3}{\tau_C} $, we also denote $z(t) = z(A_1(t), Q_2(t), Q_3(t))$ for the sake of simplicity,
\item $w(Q_2, Q_3) = \alpha_2 Q_2 - \alpha_3 Q_3$, we also denote $w(t) = w( Q_2(t), Q_3(t))$ for the sake of simplicity.

\end{itemize}

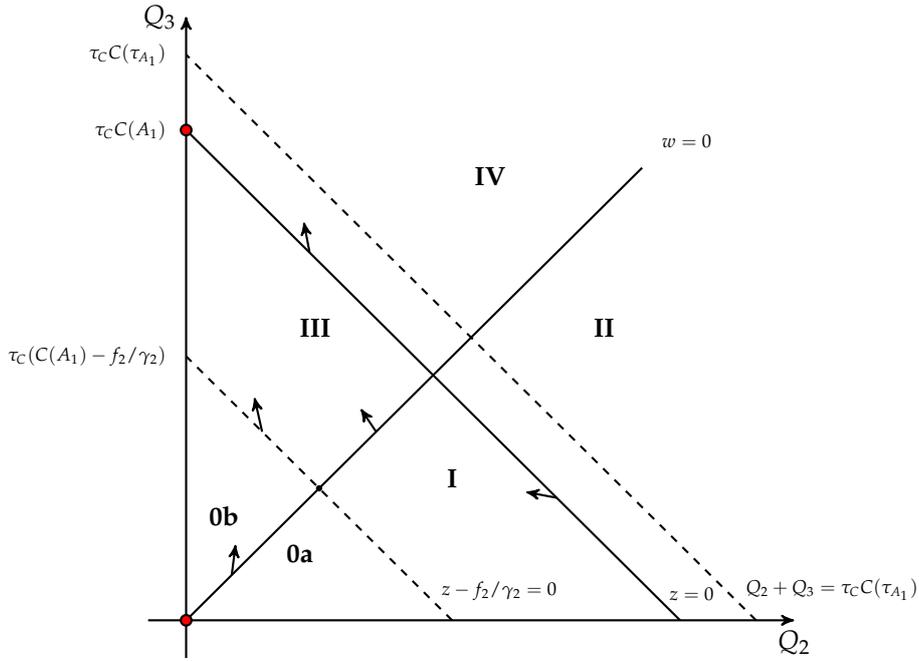
\begin{figure}[!ht]
\begin{center}
\begin{tikzpicture}[
     thick,
     >=stealth',
      empty dot/.style = { circle, draw, fill = white!0,
                           inner sep = 0pt, minimum size = 4pt },
     filled dot/.style = { empty dot, fill = red}
   ]
   \def\r{3}
   \draw[->] (-0.5,0) -- (8,0) coordinate[label = {below:$Q_2$}] (xmax);
   \draw[->] (0,-0.5) -- (0,8) coordinate[label = {left:$Q_3$}]  (ymax);
   \draw [] (0,0) -- (6,6);
   \draw [dashed] (3.5,0) -- (0,3.5);
   \draw [] (6.5,0) -- (0,6.5);
   \draw [dashed] (7.5,0) -- (0,7.5);
   \node ["above right:\scriptsize{$w=0$}"]  at (6,6) {};
   \node ["left:\scriptsize{$\tau_C(C(A_1) -f_2/\gamma_2)$}"] at (0,\r+0.5) {};
   \node ["above right:\scriptsize{$z-f_2/\gamma_2 = 0$}"] at (\r+0.1,0) {};
   \node [filled dot] at (0,6.5) {};
   \node [filled dot] at (0,0) {};
   \node ["left:\scriptsize{$\tau_C C(A_1)$}"] at (0,6.5) {};
   \node ["left:\scriptsize{$\tau_C C(\tau_{A_1})$}"] at (0,7.5) {};
   \node ["above right:\scriptsize{$z= 0$}"] at (6.1,0) {};
   \node ["above right:\scriptsize{$Q_2 + Q_3= \tau_C C(\tau_{A_1})$}"] at (7.1,0) {};
   \node ["\textbf{0a}"] at (1.5, 0.5) {};
   \node ["\textbf{I}"] at (3.5, 1.5) {};
   \node ["\textbf{II}"] at (5.5, 3.5) {};
   \node ["\textbf{IV}"] at (4, 5.5) {};
   \node ["\textbf{III}"] at (1.7, 3.5) {};
   \node ["\textbf{0b}"] at (0.5,1) {};
   \node at (1.75,1.75)[circle, fill=black, scale=0.25] {};
   \pgfmathsetmacro{\vx}{0.4}
   \pgfmathsetmacro{\vy}{0.4}
   \draw[->] (0.6,0.6) -- (0.25+\vy, 0.6+\vy);
   \draw[->] (2.5,2.5) -- (2.3, 2.8);
   \draw[->] (1.625,4.875) -- (1.55, 5.275);
   \draw[->] (4.875,1.625) -- (4.475, 1.7);
   \draw[->] (1,2.5) -- (0.9, 2.95);

\end{tikzpicture}
\caption{Scheme of the vector field projected on the $Q_2,Q_3$-plane.}
\label{fig: phase_por}
\end{center}
\end{figure}

Moreover, we introduce the following subdomains of $\R_+ \times \R_+ \times [-\tau_{A_1}, \tau_{A_1}]$ (see Figure \ref{fig: phase_por}) :
\begin{itemize}
\item the domain \textbf{0a} such that $ z-f_2/\gamma_2 \geq 0$, $w\geq 0$ and $Q_3 \geq 0$
\item the domain \textbf{I} such that $ z-f_2/\gamma_2 \leq 0$, $w\geq 0$, $Q_3 \geq 0$ and $z\geq 0$
\item the domain \textbf{II} such that $ z \leq 0$, $w\geq 0$ and $Q_3 \geq 0$
\item the domain \textbf{III} such that $ z-f_2/\gamma_2 \leq 0$, $w\leq 0$, $Q_2 \geq 0$ and $z \geq 0 $
\item the domain \textbf{IV} such that $ z \leq 0$, $w\leq 0$ and $Q_2 \geq 0$
\item the domain \textbf{0b} such that $ z-f_2/\gamma_2 \geq 0$, $w\leq 0$ and $Q_2 \geq 0$
\end{itemize}
In the Figure \ref{fig: phase_por}, the red dots are the admissible steady states. In order to prove the convergence of $X(t)$ toward $X^\infty$, we look at the trajectories in the different subdomains. We recall that it exists $m,\; M$ such that $0< m <f_2(t)< M$ for $t\geq 0$.\\
Now, let us suppose that it exists $ t_0>0$ such that $X(t_0) \in \textbf{0a}^\circ. $ Then  $\exists \delta >0$ such that for $t\in [t_0, t_0 + \delta)$,
$$\dt Q_2(t) >0,\quad \dt Q_3(t) >0, \quad \dt A_1(t) >0. $$
Also, 
\begin{align*}
\dt Q_3(t) &= \gamma_3 Q_3(t) z(t)  + f_2(t) Q_2(t),\\
& \geq m Q_2(t_0).
%\dt A_1(t) &= w(t) \left(1 - \left(\frac{A_1(t)}{\tau_{A_1}}\right)^2\right),
\end{align*}
Then, the trajectories cannot stay in $\textbf{0a}^\circ$, since $Q_3$ is bounded (Lemma \ref{lem:bnd_y}), and either the trajectories go to $\textbf{0b}$ or to \textbf{I}.
In addition, if the trajectories go from \textbf{0a} to \textbf{I}, it cannot go back again in \textbf{0a}. This result comes from the fact that $\dt Q_3 >0$ for $X\in \textbf{0a}\cup\textbf{I}$ and the fact that if the vector field points toward \textbf{I} on $\lbrace z - f_2/\gamma_2 =0 \rbrace$ when $Q_3 =\tau$ where $\tau>0$ then it points toward \textbf{I} on $\lbrace z - f_2/\gamma_2 =0 \rbrace\cap \lbrace Q_3 \geq \tau\rbrace$ (Lemma \ref{lem:vect_F}).

Let us suppose that it exists $t_1 >0 $ such that $X(t_1) \in \textbf{I}^\circ $. Then $\exists \delta >0$ such that for $t\in [t_1, t_1 + \delta)$,
$$\dt Q_2(t) <0,\quad \dt Q_3(t) >0, \quad \dt A_1(t) >0. $$ 
The trajectories cannot stay \textbf{I} since $Q_2, \; Q_3$ and $A_1$ are monotonous, bounded and there is no steady-states in the subdomain \textbf{I}. The vector field points inward on the surface $ \textbf{I} \cap\lbrace z=0\rbrace $ and it points outward on the surface $ \textbf{I} \cap\lbrace w=0\rbrace $. The trajectories can only go into the domain \textbf{III}.

Let us suppose $\exists t_2 >0$ such that $X(t_2) \in (\textbf{0b}\cup \textbf{III})^\circ. $ If $X(t_1) \in \textbf{0b}^\circ$ then  $\exists \delta >0$ such that for $t\in [t_2, t_2 + \delta)$,
$$\dt Q_2(t) >0,\quad \dt Q_3(t) >0, \quad \dt A_1(t) <0. $$
The trajectories cannot stay in \textbf{0b} and go to \textbf{III} since the null steady state is locally unstable (Proposition \ref{prop: ss}) and $Q_2$ and $Q_3$ are nondecreasing. Moreover, the vector field points toward \textbf{III} on $\lbrace  z - f_2/\gamma_2 =0 \rbrace$. \\
If $X(t_1) \in \textbf{III}^\circ$ then  $\exists \delta >0$ such that for $t\in [t_2, t_2 + \delta)$,
$$\dt Q_2(t) <0,\quad \dt Q_3(t) >0, \quad \dt A_1(t) <0. $$
The vector field  points outward on the surface $ \textbf{III} \cap\lbrace z=0\rbrace$ and inward on the surface $ \textbf{III} \cap\lbrace w=0\rbrace  $ (cf. Lemma \ref{lem:vect_F}). Either, the trajectories stay in \textbf{III} and converge toward $X^\infty$ or the trajectories go to \textbf{IV}.

If it exists $t_3>0$ such that $X(t_3)\in \textbf{IV}^\circ$, then $\forall t>t_3$ $\frac{d}{dt} Q_2(t) <0$ then $Q_2(t) < Q_2(t_3)$. Moreover, since $w(t)<0$ then $\frac{d}{dt}A_1(t) < 0 $ hence $A_1$ tends to $-\tau_{A_1}$. Also $z(t) <0$, it implies that 
\begin{align*}
\dt Q_2(t) &= \gamma_2 Q_2(t) \left(z(t) -\frac{f_2(t)}{\gamma_2} \right),\\
&\leq -\gamma_2 \frac{f_2(t)}{\gamma_2} Q_2(t),\\
&\leq -m Q_2(t), 
\end{align*}
and that $ Q_2(t) + Q_3(t) \geq \tau_C C(A_1(t)) \geq \tau_C C(-\tau_{A_1})$ hence $Q_2 + Q_3$ is decreasing and bounded below by $\tau_C C(-\tau_{A_1})$ and $Q_2$ tends to 0.
Finally, we conclude that once the trajectories enter \textbf{IV} they tend to $X^\infty$.
\medskip

In addition, if there exists $t_4>0$ such that $X(t_4) \in \textbf{II}^\circ$ then $\exists \delta >0$ such that for $t\in [t_4, t_4 + \delta)$, 
$$\dt Q_2(t) <0, \quad \dt Q_3(t) <0,\quad \dt A_1(t)>0.$$
Let us suppose that the trajectories remains in \textbf{II}. Since $Q_2$, $Q_3$ and $A_1$ are monotonous and bounded, $X$ converge to a point in \textbf{II}. However, it is absurd since there is no steady state in \textbf{II}. It implies that the trajectories leave the space \textbf{II} and enter either \textbf{I}, \textbf{III} or \textbf{IV}.

\begin{comment}
Now, let us suppose that it exists $ t_0>0$ such that $X(t_0) \in (\textbf{0a}\cup \textbf{I})^\circ. $ Then  $\exists \delta >0$ such that for $t\in [t_0, t_0 + \delta)$,
\begin{align*}
\dt Q_3(t) &= \gamma_3 Q_3(t) z(t)  + f_2(t) Q_2(t) >0.%\\
%\dt A_1(t) &= w(t) \left(1 - \left(\frac{A_1(t)}{\tau_{A_1}}\right)^2\right)>0.
\end{align*}
Hence $Q_3$ is strictly increasing. Moreover, using Lemma \ref{lem:vect_F} (see Figure \ref{fig: phase_por}), the vector field points inward on the surface $(\textbf{0a}\cup \textbf{I}) \cap\lbrace z=0\rbrace $ and it points outward on the surface $ (\textbf{0a}\cup \textbf{I}) \cap\lbrace w=0\rbrace $. Also, there is no steady state in $ \textbf{0a}\cup \textbf{I}$ apart from the origin which is locally linearly unstable (cf. Proposition \ref{prop: ss}), the velocity of the trajectories remains strictly positive. The trajectories can only go into the subspace $ \textbf{0b}\cup \textbf{III}$.\\
\end{comment}

\end{proof}
\subsubsection{Asymptotic behavior of the complete system}
Once the convergence is established on the limit system, the global asymptotic behavior of the complete system is given by the following theorem.
\begin{theorem}[Long time behavior of the system \eqref{eq:sys_abs}]

Let $Q_0(0) >0$, $Q_i(0) \geq 0$ for $i =1,\;2,\;3$, $ A_1(0) \in (-\tau_{A_1}, \tau_{A_1})$ and $A_2(0)\in (0, \tau_{A_2})$. Let \eqref{hyp:second} and \eqref{hyp:third} be true.
Then $$X(t) = \big(Q_0(t),\; Q_1(t),\; Q_2(t),\; Q_3(t),\; A_1(t),\; A_2(t)\big) \longrightarrow \big(0,\; 0,\;  0,\; \tau_C C(-\tau_{A_1}), \; -\tau_{A_1},\; \tau_{A_2} \big) \quad \text{for } t\rightarrow +\infty $$ 
where $C(-\tau_{A_1}) = 1 - \frac{\tau_{A_1}}{\tau_{A_1}^C} + \frac{\tau_{A_2}}{\tau_{A_2}^C}$.

\label{thm: ltb}
\end{theorem}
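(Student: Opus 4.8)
The plan is to view the full six‑dimensional system \eqref{eq:sys_abs} as a vanishing perturbation of the three‑dimensional reduced system \eqref{eq:sys_abs_lim}, whose trajectories converge by Theorem \ref{thm: ltb_lim}, and to control the perturbation by the exponential estimates already at our disposal. First I would collect what needs no new work: by Proposition \ref{prop_pos} the solution exists, is unique, and satisfies $Q_i(t)\ge 0$, $A_1(t)\in[-\tau_{A_1},\tau_{A_1}]$, $A_2(t)\in[0,\tau_{A_2}]$; by Proposition \ref{prop:exp_decay} it is global and $Q_0(t),Q_1(t)\to 0$ exponentially fast; by Proposition \ref{prop:conv_a2} (whose proof carries over to the present, slightly more general, initial data, since it relies only on the two–sided bound on $Q_2+Q_3$) one has $A_2(t)\to\tau_{A_2}$ exponentially fast. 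Moreover Lemma \ref{lem:bnd_y} furnishes $t^\ast>0$ and $0<c_y<C_y$ with $c_y\le Q_2(t)+Q_3(t)\le C_y$ for $t\ge t^\ast$, so the curve $Z(t):=\big(Q_2(t),Q_3(t),A_1(t)\big)$ is precompact. It therefore remains to prove $Z(t)\to\big(0,\;\tau_C C(-\tau_{A_1}),\;-\tau_{A_1}\big)$ as $t\to\infty$.

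Next I would write the $(Q_2,Q_3,A_1)$–block of \eqref{eq:sys_abs} as $\dt Z=G(Z)+R(t)$, where $G$ is the right‑hand side of \eqref{eq:sys_abs_lim} and the remainder $R(t)$ gathers exactly the terms that distinguish the two systems: the source $f_1(A_1)Q_1$ in the $Q_2$–line; the differences $\tfrac{\gamma_i}{\tau_{A_2}^C}Q_i\big(A_2-\tau_{A_2}\big)$ and $\pm\big[f_2(A_1,A_2)-f_2(A_1,\tau_{A_2})\big]Q_2$ in the $Q_2$– and $Q_3$–lines; and the term $\alpha_1 Q_1\big(1-A_1^2/\tau_{A_1}^2\big)$ in the $A_1$–line. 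Using the Lipschitz bound on $f_2$, the boundedness of $Q_2,Q_3,A_1$, and $Q_1(t)\to0$, $|A_2(t)-\tau_{A_2}|\to0$, we get $R(t)\to0$ as $t\to\infty$; hence the $Z$–subsystem is asymptotically autonomous with limit system \eqref{eq:sys_abs_lim}. By the theory of asymptotically autonomous differential equations, the $\omega$–limit set $\Omega$ of the precompact orbit $Z(t)$ is nonempty, compact, connected, and positively invariant for the flow of \eqref{eq:sys_abs_lim}.

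Then I would identify $\Omega$ using Theorem \ref{thm: ltb_lim}. Every point of $\Omega$ has $Q_2+Q_3\ge c_y>0$, so $\Omega$ is disjoint from the line of equilibria $\{Q_2=Q_3=0\}$ of \eqref{eq:sys_abs_lim}. Let $p\in\Omega$. If the $Q_2$–coordinate of $p$ is positive, Theorem \ref{thm: ltb_lim} (with the obvious boundary variant when $A_1=-\tau_{A_1}$, which is the corresponding slice of \eqref{eq:sys_abs_lim}) gives that the forward orbit through $p$ tends to $\big(0,\tau_C C(-\tau_{A_1}),-\tau_{A_1}\big)$. If the $Q_2$–coordinate is $0$, then since $\{Q_2=0\}$ is invariant for \eqref{eq:sys_abs_lim} with reduced dynamics $\dt Q_3=\gamma_3 Q_3\big(C(A_1)-Q_3/\tau_C\big)$, $\dt A_1=-\alpha_3 Q_3\big(1-A_1^2/\tau_{A_1}^2\big)$, the coordinate $A_1$ is nonincreasing; because $Q_3\ge c_y>0$ excludes $Q_3\equiv0$ and \eqref{hyp:third} keeps $C(\cdot)>0$, one checks that $A_1$ must decrease to $-\tau_{A_1}$ and $Q_3$ to $\tau_C C(-\tau_{A_1})$, so again the forward orbit through $p$ converges to $\big(0,\tau_C C(-\tau_{A_1}),-\tau_{A_1}\big)$ — with the single exception $p=E_\star:=\big(0,\tau_C C(\tau_{A_1}),\tau_{A_1}\big)$. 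I would rule out $E_\star\in\Omega$ directly: near $E_\star$ one has $\alpha_1 Q_1+\alpha_2 Q_2-\alpha_3 Q_3<0$ eventually (since $Q_1\to0$ and $-\alpha_3\tau_C C(\tau_{A_1})<0$), hence $\dt A_1<0$ there while $A_1<\tau_{A_1}$, which is incompatible with $A_1(t)\to\tau_{A_1}$; so $E_\star\notin\Omega$. Consequently, if $\Omega$ contained any point other than $\big(0,\tau_C C(-\tau_{A_1}),-\tau_{A_1}\big)$, positive invariance of $\Omega$ would force $\big(0,\tau_C C(-\tau_{A_1}),-\tau_{A_1}\big)\in\Omega$ anyway; and since this point is linearly asymptotically stable for \eqref{eq:sys_abs_lim} by Proposition \ref{prop: ss}, the standard fact that the $\omega$–limit set of an asymptotically autonomous orbit which contains a locally asymptotically stable equilibrium of the limit system reduces to that equilibrium yields $\Omega=\big\{\big(0,\tau_C C(-\tau_{A_1}),-\tau_{A_1}\big)\big\}$. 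Combined with $Q_0,Q_1\to0$ and $A_2\to\tau_{A_2}$, this gives the asserted convergence of the full state.

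\textbf{Main obstacle.} Everything is elementary except the rigorous reduction, i.e. justifying that the coupled block $(Q_2,Q_3,A_1)$ really inherits the asymptotics of \eqref{eq:sys_abs_lim}, and within that pinning down $\Omega$: the line $\{Q_2=Q_3=0\}$ is discarded via Lemma \ref{lem:bnd_y}, and the remaining unstable equilibrium $E_\star$ via the monotonicity of $A_1$ in its neighbourhood, but these two exclusions plus the appeal to the theory of asymptotically autonomous systems are where the care is needed. A completely self‑contained alternative would be to rerun the trapping‑region/monotonicity analysis of Theorem \ref{thm: ltb_lim} directly on \eqref{eq:sys_abs}: since the extra terms are $O(e^{-ct})$, after a sufficiently large time the signs of $\dt Q_2,\dt Q_3,\dt A_1$ in the subdomains \textbf{0a}--\textbf{IV} of Figure \ref{fig: phase_por} are unchanged, so the same sequence of region transitions is forced and the same limit is reached; this avoids any black‑box theorem at the cost of a longer, but routine, argument.
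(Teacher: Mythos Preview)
Your proposal is correct and follows the same route as the paper: cast the $(Q_2,Q_3,A_1)$--block of \eqref{eq:sys_abs} as an asymptotically autonomous system with limit \eqref{eq:sys_abs_lim}, and then combine Theorem~\ref{thm: ltb_lim} with the Markus theory of Appendix~\ref{app:asymp_aut_sys} (Theorems~\ref{thm:asymp_auto_1}--\ref{thm:asymp_auto_2}) to conclude. You are in fact more thorough than the paper, which invokes Theorem~\ref{thm:asymp_auto_2} directly without checking that the $\omega$--limit set of the non-autonomous orbit actually contains a point in the basin of the stable equilibrium; you close this by using Lemma~\ref{lem:bnd_y} to exclude the line $\{Q_2=Q_3=0\}$, handling the invariant face $\{Q_2=0\}$ by the explicit two--dimensional dynamics, and ruling out the unstable equilibrium $E_\star=(0,\tau_C C(\tau_{A_1}),\tau_{A_1})$ separately.
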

\begin{proof}\\
We introduce the function $F:\; (0,\infty)\times\mathbb{R}^3 \mapsto \mathbb{R}$ such that for $Y =  (Q_2, Q_3, A_1)$:
\begin{align*}
F(t,Y)& = \begin{pmatrix}
\gamma_{2}Q_2\left(1 -\tfrac{Q_2+Q_3}{\tau_{C}}+ \tfrac{A_1}{\tau_{A_1}^C}+\tfrac{A_2(t)}{\tau_{A_2}^C}\right) +f_1\left(A_1\right) Q_1(t)-f_2\left(A_1,A_2(t)\right) Q_2\\
\gamma_{3} Q_3\left(1 -\tfrac{Q_2+Q_3}{\tau_{C}}+\tfrac{A_1}{\tau_{A_1}^C}+\tfrac{A_2(t)}{\tau_{A_2}^C}\right) + f_2\left(A_1,A_2(t)\right) Q_2\\
\left(\alpha_{1}Q_1(t)+\alpha_2 Q_2-\alpha_3 Q_3 \right)\left(1+\tfrac{A_1}{\tau_{A_1}} \right)\left(1-\tfrac{A_1}{\tau_{A_1}} \right)
\end{pmatrix}.
\end{align*}
Moreover, Proposition \ref{prop_pos} gives the exponential convergence of $Q_0$ and $Q_1$ toward 0 and Proposition \ref{prop:conv_a2} gives the exponential convergence of $A_2$ toward $\tau_{A_2}$. Hence, we obtain that 
$$F(t,Y) \rightarrow G(Y) \quad \text{as }t \rightarrow \infty \quad \text{uniformly locally in } Y \in \mathbb{R}^3, $$
where 
\begin{align*}
G(Y)& = \begin{pmatrix}
\gamma_{2}Q_2\left(1 -\tfrac{Q_2+Q_3}{\tau_{C}}+ \tfrac{A_1}{\tau_{A_1}^C}+\tfrac{\tau_{A_2}}{\tau_{A_2}^C}\right) -f_2\left(A_1,\tau_{A_2}\right) Q_2\\
\gamma_{3} Q_3\left(1 -\tfrac{Q_2+Q_3}{\tau_{C}}+\tfrac{A_1}{\tau_{A_1}^C}+\tfrac{\tau_{A_2}}{\tau_{A_2}^C}\right) + f_2\left(A_1,\tau_{A_2}\right) Q_2\\
\left(\alpha_2 Q_2-\alpha_3 Q_3 \right)\left(1+\tfrac{A_1}{\tau_{A_1}} \right)\left(1-\tfrac{A_1}{\tau_{A_1}} \right)
\end{pmatrix}.
\end{align*}
Using the results of Theorem \ref{thm: ltb_lim}, we have that the $\omega$-limit set (see Appendix \ref{app:asymp_aut_sys}, equation \eqref{eq:omeg_set}) of the limit system \eqref{eq:sys_abs_lim} is restricted to
$$\omega(0, Y_0) = \lbrace(0, \tau_C C(-\tau_{A_1}), - \tau_{A_1}) \rbrace. $$
Then, using Theorem \ref{thm:asymp_auto_2} on the asymptotically autonomic differential equations (see Appendix \ref{app:asymp_aut_sys}),we deduce the following results on the solutions of system \ref{eq:sys_abs}  
$$\big(Q_0(t),\; Q_1(t),\; Q_2(t),\; Q_3(t),\; A_1(t),\; A_2(t)\big) \longrightarrow \big(0,\; 0,\;  0,\; \tau_C C(-\tau_{A_1}), \; -\tau_{A_1},\; \tau_{A_2} \big) \quad \text{for } t\rightarrow +\infty .$$
\end{proof}
\subsection{Interpretation of the mathematical results.}

The asymptotic behaviour of the system corresponds to the pathological case: the tumor cells are proliferating and it remains only sensory axons in the domain. This steady-state is corroborated in the observation. Furthermore, the maximum amount of PDAC cells is given by $$\tau_C C(-\tau_{A_1}) =\tau_C \left(1 - \frac{\tau_{A_1}}{\tau_{A_1}^C} + \frac{\tau_{A_2}}{\tau_{A_2}^C} \right).$$
This quantity corresponds to the maximum carrying capacity of the domain and depends positively on the amount of sensory axons and negatively on the amout of autonomic axons. This result resonates with the fact that the sensory axons have a pro-tumoral effect and a relatively high density in PDAC tumors whereas the autonomic axons have an anti-tumoral effects  and a relatively low density in the tumoral tissues during the late stages of the cancer development.

\section{Calibration of the system and results}\label{sec:calib}
Next, we calibrated the model with biological information captured in the data. The calibration seems to be in general a subjective approach. In our case, the large number of input parameters renders it challenging and highly time-consuming.
We use an optimization method in order to reduce the user intervention and obtain a more objective selection of parameters.\\ 
We first introduce and study the biological information aggregated in the data. Then, we construct an objective function that integrates the biological assumptions and the experimental data. We study the identifiability of the parameters in regard to this objective function. Finally, we obtain sets of parameters which calibrate reasonably well the model and we discuss the sensitivity of these parameters. 

\subsection{Biological assumptions and observation}
 The observation aggregates experimental data and empirical knowledge such as experimentally validated hypotheses (time of appearances, etc.). We gather all the biological information available on the process and formalize into the vector of observation denoted  $y^{*}$ and the vector of chronological parameters $(t_i)_{i=1 \hdots 5}$.

\subsubsection{Experimental data 1}

The observation used as referential in order to calibrate the model is represented in Figure \ref{fig:Prop_Mice1}. 
\begin{figure}[ht!]
\begin{center}
\includegraphics[width=\textwidth]{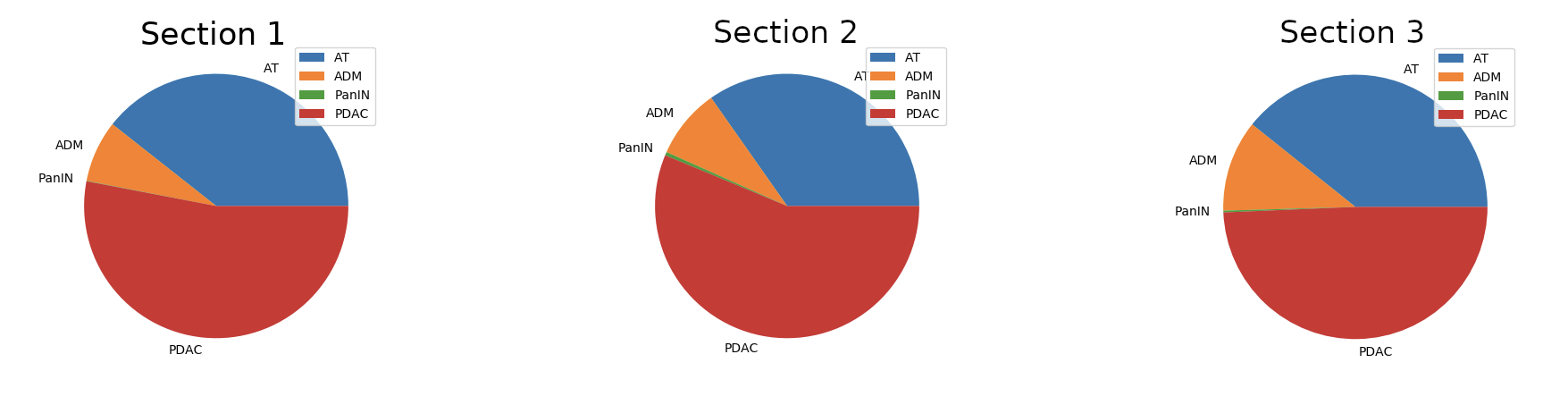}
\end{center}
\caption{{\textbf{Proportion of cell populations in the pancreas of a KIC mouse at 45 days.} The proportions of acini, ADM, PanIN and PDAC lesions was calculated on 3 sections. Each column represents the data assimilated to one section.}}
\label{fig:Prop_Mice1}
\end{figure}
Data were obtained from the KIC (\textit{LSL-KrasG12D/+; Cdkn2a (Ink4a/Arf)lox/lox; Pdx1-Cre}) transgenic mouse model of PDAC and describe the percentage of acinar tissue, ADM, PanIN and PDAC in histological sections throuth the pancreas of {a} 6.5 weeks-old mouse (45 days). Quantification data have been published and are available in the source data file of \cite{guillot2020sympathetic}.
{Because of the variability between samples, we choose to aggregate all the information contained in each section of the same mouse. We consider the observation as the proportion of each compartment averaged over all samples of the same mouse and denote these experimental data} $y^*_1$ for the Acini, $y^*_2$ for the ADM, $y^*_3$ for the PanIN and $y^*_4$ for the PDAC. 
%This results in smaller sample sizes of experimental data and a better fit to the mathematical model. Indeed, aggregating the spatial information of the data is consistent with the fact that the model does not consider the spatial dimension. 
In addition, the time of tissue harvest is 45 days and is denoted $t_f$ in the following.

\begin{remark}
One can reasonably object that considering a larger number of tissue sections is consistent with the fact that the domain in the mathematical model is the whole pancreas and its immediate environment. However, one of the purpose of this work is to be able to use human biopsies as data. In the case of human data, the number of biopsies is limited and it becomes interesting to develop generic models and methods that make up for the lack of information. 
\end{remark}

\subsubsection{Experimental data 2}
Data on axonal density were obtained by quantifying innervation in the pancreas of control and KIC mice. Whole pancreases were immunostained with antibodies specific to each PNS neuron subtype. Sympathetic axons were immunolabeled with an antibody against tyrosine hydroxylase (TH), parasympathetic axons with an antibody against vesicular acetylcholine transporter (VAChT) and sensory fibers with an antibody against calcitonin gene-related peptide (CGRP). The tissues were then imaged using LSFM to allow 3D visualisation of the neuronal networks. {LSFM} images were processed with Imaris software. Regions of Interest (ROIs: Acini, ADM, PanIN or PDAC) were segmented based on the autofluorescence signal of the tissue and their volumes were measured. Axonal networks were manually reconstructed using the Imaris “Filament tracer” tool. The “dendrite length sum” was collected for each ROI. The axon density was calculated as follows: dendrite length sum ($nm$) / volume of ROI ($\mu m^3$). {The full protocol and the quantification data for the sympathetic axons are available in the source data file of \cite{guillot2020sympathetic}.The quantification data for parasympathetic and sensory fibers are performed using the same protocol. Densities of autonomic axons are the sum of sympathetic and parasympathetic axons in each ROI. Axonal data are measured at day 45. We consider the observation as density in the total tumor volume and denote these experimental data $y^*_5$ for the autonomic axons, $y^*_6$ for the sensory axons and $A_1^{eq}$ the density of autonomic axons of the control mouse. The experimental data expressed in $nm(\mu m)^{-3}$ are as follows :
$$y^*_5 = 0.1077  , \quad y^*_6 = 0.1468 ,\quad A_1^{eq} = 0.0099. $$}

\subsubsection{Biological assumptions on the chronological process}
\textbf{Initial conditions :} there are only healthy cells (Acini) at initial time and there is a negligible amount of sensory axons in the pancreas and a small amount of autonomic axons at initial time. These conditions are formalized by \eqref{eq:init_cond}.{ In addition, the neuroplastic changes and the cancer progression are relatively negligeable at early stages (before two weeks of age in the KIC model).} We denote $t_0$ the parameter (in days) corresponding to the initial time for the model simulations. Without loss of generality, we consider $t_0 =10$ days. \\

\textbf{Chronological appearances :} Based on the previous characterisation of the KIC model (\cite{aguirre2003activated} and our personal observations), the first appearance of ADM is around time $t_1 = 17$ days, of PanIN around time $t_2 =21 $ days and PDAC around time $t_3 = 35$ days. The density of autonomic axons increases in ADM regions, peaks in PanIN and decreases in PDAC, while sensory axons are rarely observed in PanIN, but have a high density in PDAC. We therefore empirically set the time of appearance of $A_1$ and $A_2$ at $t_4 = 18$ days and $t_5 = 30$ days, respectively.  
\\

\subsection{Parameters calibration with an optimization procedure}
The dynamics of the system depends very strongly on the choice of parameters. This choice is based on the calibration of the model. In other words, it is first necessary to quantify the distance between the outputs of the model and the biological data and then to find the parameters that minimize this distance in an objective way. That is the reason why, testing the goodness of fit through an optimization procedure impose itself as a rigorous method. The following section describes a data-driven process which minimizes the deviation between the observation and the model and which gives an objective calibration of the parameters.

\subsubsection{Objective function}
The measurement of this deviation is made possible by the objective function or also called the cost function which is denoted as $\mathcal{G}$. The inputs of $\mathcal{G}$ are the parameters of \eqref{eq:Mod_Q0}-\eqref{eq:Mod_A2} (see Table \ref{tab:parameters}) denoted as the vector $\theta$. This cost function integrates the biological assumptions and the comparison to the experimental data. The further away from the experimental data the trajectories are, the higher the cost function is. Hence, finding the parameters which minimize the cost function is equivalent to the calibration of the model. We recall that $y^*$ are the observation used to calibrate the model. Hence, $y^* \in \mathcal{K}$ where $\mathcal{K}$ is a compact set in $\mathbb{R}^6$. The vectors of parameters $\theta^*$ which minimize the function $\mathcal{G}$ give the optimal calibration of the system where the biological constraints and hypothesis are taken into accounts. The cost function is described by the following equation :
\begin{align}
\begin{split}
\mathcal{G}(\theta)=&\underbrace{\sum\limits_{k=0}^{3} \left(a_k \int_{t_f-3}^{t_f+3} \left(\frac{Q_k(s|\theta)}{(Q_0+Q_1+Q_2+Q_3)(s|\theta)} - y_{k+1}^{*}\right)^2ds \right)}_\text{\parbox{6.5cm}{Measure of the deviation between the numerical simulation and the experimental data 1}}\\
& + \underbrace{a_4 \int_{t_f-3}^{t_f+3} \left(A_1(s|\theta) + A_1^{eq}- y_5^{*}\right)^2ds  + a_5 \int_{t_f-3}^{t_f+3} \left(A_2(s|\theta) - y_6^{*}\right)^2ds}_\text{\parbox{7.5cm}{Measure of the deviation between the numerical simulation and the experimental data 2}} \\
& + \underbrace{\sum\limits_{k=1}^{3}\left(b_k  \int_{t_0}^{t_k}(Q_k(s|\theta))^2 ds\right) 
 + b_4\int_{t_0}^{t_4}(A_1(s|\theta))^2 ds + b_5\int_{t_0}^{t_5}(A_2(s|\theta))^2 ds}_\text{\parbox{7.5cm}{Penalization term ensuring the chronological assumptions}}.
\end{split}
\label{eq:cost_func}
\end{align}
 The parameter $t_f$ corresponds to the time in days of data extraction from in vivo experimentations (see Figure \ref{fig:Prop_Mice1}), hence $t_f =45$. The definition of \eqref{eq:cost_func} also includes the interval of days $[t_f -3; t_f +3]$ and the parameters $(a_k)_{k =0, \hdots, 5}$, $(b_k)_{k=1,\hdots, 5}$. 
The time interval $[t_f -3; t_f +3]$ allows us to use the $L_2$-norm squared to compare the simulated trajectories to the observation. However, it implies that the experimental data are supposed to be true on a six days interval around $t_f$. One can reasonably justify this assumptions by the fact that this non-local norm regularizes the observation from its inherent biological chaos. Chaos means that biological phenomenon, processes or experiments are extremely sensitive to small perturbations. In our case, it is translated by the variability of chronological appearances of phenomenon between mice. The $L^2$-norm smooths the variability in time in the objective function and is less sensitive to the observation variability.\\ 

\begin{remark} We introduce $a_k$ and $b_k$ as normalization parameters in $\mathcal{G}$. The objective function $\mathcal{G}$ is a sum of positive terms where each of these terms have a contribution to the final cost. These contributions are individually linked to a specific part of the information on the biological process. However, there are discrepancies between the different parts, for instance, $y_2^* \ll y_3^*$ (cf. Figure \ref{fig:Prop_Mice1}) or $t_1 \ll t_3$. Some data can be falsely assimilated as outliers. Hence, we consider the following:
$$a_k = \frac{1}{6 y_k^*}, \quad b_k = f_k(y_k^*)\frac{1}{t_k - t_0},$$
where $f_k$ adjusts the coefficient $b_k$ in regard to $y_k^* $. Ultimately, the normalization coefficients make all contributions to the cost function equal.
\end{remark}

\subsection{Identifiability and sensitivity of the parameters}
We study the identifiability of the model to understand the degree to which the parameters can be constrained to a unique value or a reasonable range of values given the data available. Since multiple model parametrizations produce similar behaviour (see section \ref{sec:opti}), we use profile likelihood methods (cf. \cite{kreutz2013profile,roosa2019assessing}) to refine the interpretation of the estimators (see section \ref{sec:bio_inter}). 

\subsubsection{Optimization challenges}\label{sec:opti}
\begin{figure}[ht!]
   \centering
   \includegraphics[width=\textwidth]{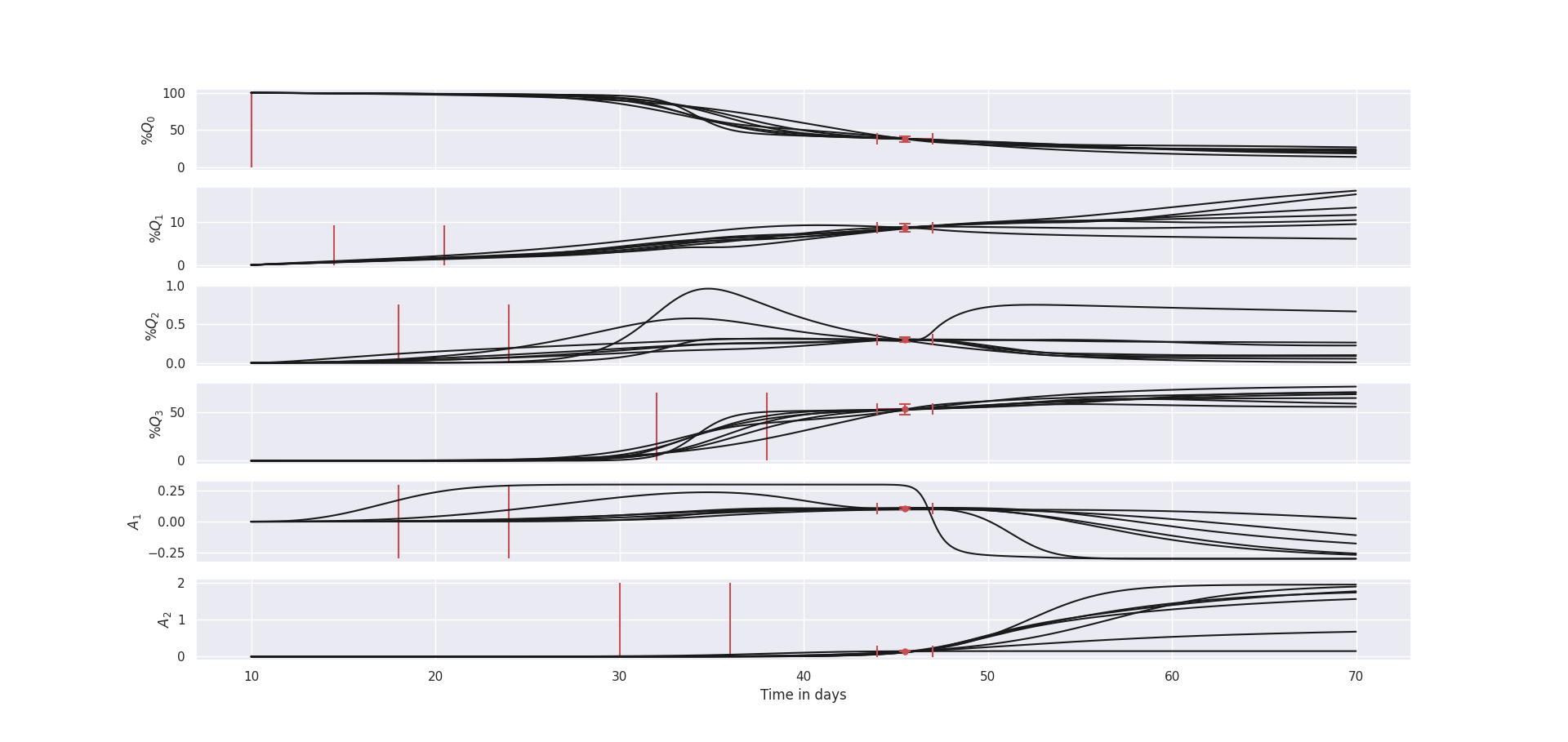}      
\caption{\textbf{Evolution of cell populations and axons in the pancreas for 7 different sets of parameters.} {\small The x-axis is the time in days. For the top four graphs, the y-axis corresponds to percentage of a specific population in the pancreas. The vertical red lines correspond to the time of appearance within a tolerance of six days. The red dots at day 45 correspond to the observation $y^*$ for the mouse 1 (see Figure \ref{fig:Prop_Mice1}) and and the confidence intervals of the data are represented by the red segments. Each curve on the same graph is associated to its set of parameters (see Table \ref{tab:par_Opti_1}).  All sets of parameters have almost the same optimization score (under 10), however the dynamics are different. } }
\label{fig:Opti_1}
\end{figure}

The optimization problem leads to mathematical and numerical challenges. With such experimental data and such a large set of parameters, we can not except to calibrate the model uniquely. It leads to multiple local minimizers and convergence issues. We use the Covariance Matrix Adaptation Evolution Strategy (CMA-ES see \cite{hansen2006cma,jastrebski2006improving}) which is based on a derivative-free evolution algorithm in order to overcome the obstacles linked to the size of the optimization problem and to the non-trivial dependence of the cost function on the parameters. The results of the optimization procedure give a set of acceptable parameters. The evolution of cell populations can be simulated for each of these parameters' vector (cf. Figure \ref{fig:Opti_1}). It is interesting to note that the trajectories in the numerical simulation show different behaviours for the same cost (see Figure \ref{fig:Opti_1}).\\
Recall that the expected asymptotic behaviour corresponds to the depletion of all healthy and precancerous cells and the saturation of PDAC. Furthermore, the autonomic axons density is supposed to vanish whereas the sensory axons density converges to a threshold (see Section \ref{sec:model}). The numerical simulations confirm that the time near the steady state and the time of the experimental data 1 are not on the same time frame. Moreover, some trajectories tend faster to the equilibrium with the same cost function values than others. For instance, we observe in Figure \ref{fig:Opti_1} that some sets of parameters correspond to a faster convergence rate: 
\begin{itemize}
\item We see that one trajectory of $Q_1$ is decreasing after time $t_f$.
\item Two trajectories of $Q_2$ are increasing then decreasing with a peak values between the range of the appearance time of the PDAC.
\item We distinguish that two trajectories of $A_1$ are increasing, reach their maximum value around 35 days and then  are decreasing relatively fast after 45 days.
\end{itemize}
Further analysis are needed in order to classify which parameters tuples are relevant.\\

\subsubsection{Discrete formulation of the objective function and link with the likelihood function}
Consider the continuous solution $X(t)$ of the system formed by the equations \eqref{eq:Mod_Q0}-\eqref{eq:Mod_A2} $$X(t) = (Q_0(t),Q_1(t),Q_2(t),Q_3(t),A_1(t),A_2(t)) = (X_k(t))_{k=1 \hdots 6}.$$ 
Recall that $X(t)$ is included in a compact subset of $\mathbb{R}^6$ denoted $\mathcal{K}$ (see Section \ref{sec:model}). Moreover, the time interval of interest is finite and we denote $T$ its upper bound and $I = [0,T)$. We denote $(s_i)_{0, \hdots,N-1} $ the finite sequence of length $N$ used to to discretize the time interval $[0, T)$. In order to calibrate the  parameters of the system \eqref{eq:Mod_Q0}-\eqref{eq:Mod_A2}, we introduce the \textit{model function} $g: \; \mathcal{K} \mapsto \; \mathbb{R}_{+}^6$. It allows us to confront the simulations to the measurement values and to the biological assumptions. In the first hand, we denote $g_k$ as the $k$-th coordinate of the model function denoted:
\begin{equation*}
g_k \left(s_i|\theta  \right) = 
	\begin{cases}
		\begin{cases}
		\frac{X_k}{\sum_j Q_j} (s_i) & \text{ if } k =1, \hdots,4\\
		X_4(s_i) + A_1^{eq} & \text{ if } k=5\\
		X_5(s_i) & \text{ if } k=6\\
		\end{cases} & \text{ if } s_i \in [t_f-3 ; t_f +3] \\		
		X_k(s_i) & \text{ if } s_i \in[0;t_k]\\
	\end{cases}
\end{equation*} 
where $A_1^{eq}$ is the initial equilibrium state of autonomic axons. In the second hand, we denote $\tilde{y}_k$ the $k$-th coordinate of the statistical observation (i.e. the quantitative value gathering the measurement values and the biological assumptions):
\begin{equation}
\tilde{y}_{k,i} = 
	\begin{cases}
	y^*_k & \text{ if } s_i \in [t_f-3;t_f +3]\\
	0 & \text{ if } s_i \in [0,t_k]
	\end{cases}
\label{eq:model_to_data}
\end{equation}
where $i = 0, \hdots, N-1$, $y^*$ is the vector of the experimental measurements and $t_k$ is the time of appearance of the $k$-th variable of X.
Finally, we denote the observables as following :
\begin{equation}
y_i(\theta) =g\left(s_i| \theta\right)+ \epsilon_i \quad \text{for } i=0,\hdots,N-1,
\label{eq:observable}
\end{equation}
where $\theta$ is the vector of parameters of the model and $\epsilon_i$ is the independent noise.\\

Finally, we denote $\Chi_2$ the objective function which measures the agreement of experimental data with the observables predicted by the model :

\begin{equation}
\Chi_2(\theta) = \sum\limits_{i=0}^{N-1}\frac{1}{c_i}  \| y_{i}(\theta)-\tilde{y}_{i} \|_2^2,
\label{eq:cost_func_disc}
\end{equation}
where $\tilde{y}_i$ is the vector composed by the $\tilde{y}_{k,i}$ (see \eqref{eq:model_to_data}) and $c_i$ are coefficients gathering the corresponding measurements errors and normalization terms coming from the discrepancy of the data. The objective function described in \eqref{eq:cost_func_disc} corresponds to the objective function $\mathcal{G}$ given in \eqref{eq:cost_func} in a discrete time frame and considering the experimental noise. This function is also assimilated to $-2LL$, where $LL$ is the \textit{log-likelihood}. Hence, the minimization problem is equivalent to the \textit{maximum likelihood estimation} problem. It is a widely documented problem in the literature and has beneficial properties like efficiency and consistency \cite{van2000asymptotic}. 

\subsubsection{Profile likelihood to identify the parameters}

The issue remains the large number of parameters and the "small" amount of data at our disposal. Therefore, the study of the identifiability of the parameters gives further knowledge about the fit between the model and the observation and helps to interpret the outputs given by the model. 
Since the optimization problem is linked to the maximization of the likelihood, the \textit{profile likelihood} solves the identifiability problem \cite{murphy2000profile}. This one-dimensional projection is performed to visually evaluate whether different values of the same parameter give similar outputs. We recall that the vector $\theta$ denotes the parameters of the model .
Hence, using the objective function \eqref{eq:cost_func_disc}, the impact of the value of the specific parameter $\theta_j$ for fitting the model to the observation is assessed by the following profile likelihood :
\begin{equation}
\mathcal{P}_j(p) = \min\limits_{\theta \in \lbrace \theta\;  |\; \theta_j =p \rbrace} \Chi_2(\theta),
\label{eq:prof_lik}
\end{equation}
where the objective function \eqref{eq:cost_func_disc} is evaluated as a function of the values $p^j$ taken by the parameter $\theta_j$ while all others parameters $ \theta_i,\; i\neq j$ are reoptimized. This one-dimensional representation of the likelihood \eqref{eq:prof_lik} can be geometrically interpreted in order to assess the identifiability of the parameter $\theta_j$ \cite{kreutz2013profile, raue2009structural}. 
For instance, a flat profile for $\mathcal{P}_j$ corresponds to a \textit{structural non-identifiability} for the parameter $\theta_j$. It implies that the parameter is non-unique for the minimization of the objective function. Eliminating the non-uniqueness requires more data or additional data on different quantities. If $\mathcal{P}_j$ has a minimum but is flat on one side then the parameter $\theta_j$ is considered as a \textit{practically non-identifiable} parameter. 
It implies that the data do not contain significant enough information about the parameter. The parameter value cannot be restricted to a precise value. Similarly, new experiments leading to additional data are required in order to rigorously estimate the parameters.
However, if $\mathcal{P}_j$ describes a curve with unique minimum in a realistic range of values then the parameters $\theta_j$ is \textit{identifiable}. Also, if some knowledge is assumed on the experimental noise $\epsilon_j$, a finite confidence interval can be computed and gives an asymptotic validation of the identifiability of the parameter $\theta_j$ (see \cite{murphy2000profile,kreutz2013profile,raue2009structural}).\\
An implementation of the profile likelihood has been performed where $\theta_j$ takes $20$ distinct values denoted $\left\lbrace p_i^j | i=1 \hdots 20\right\rbrace$ which cover its range (see Table \ref{tab:parameters}). For each $p_i^j$, the minimization problem \eqref{eq:prof_lik} has been solved numerically with 50 different initial conditions on the parameters $ \theta_i,\; i\neq j$ in order to fully explore the optimization domain (see Appendix \ref{app:ident_prof_lik}). 
This Monte-Carlo approach allows to obtain several sets of admissible parameters for each $p_i^j$. It gives a qualitative criterium to validate the identifiability of a parameter : if the median of the costs $\mathcal{P}_j$ admits a distinct minimum at $p^{j,*}$ then we consider the parameter to be identifiable and its value to be $p^{j,*}$.
 {It is reasonable to consider as estimator the identifiable parameters $p^{j,*}$, however these values are highly dependent on the choice of the discretization of the ${\left(p^j_i\right)_i}$. This uncertainty related to this choice is significantly reduced when we focus on the \textit{confidence range} obtained through the procedure, i.e. the range where the identifiable parameters give reasonable results concerning the deviation between the observations and the model.}

\begin{figure}[ht!]
\begin{center}
\includegraphics[width=\textwidth]{./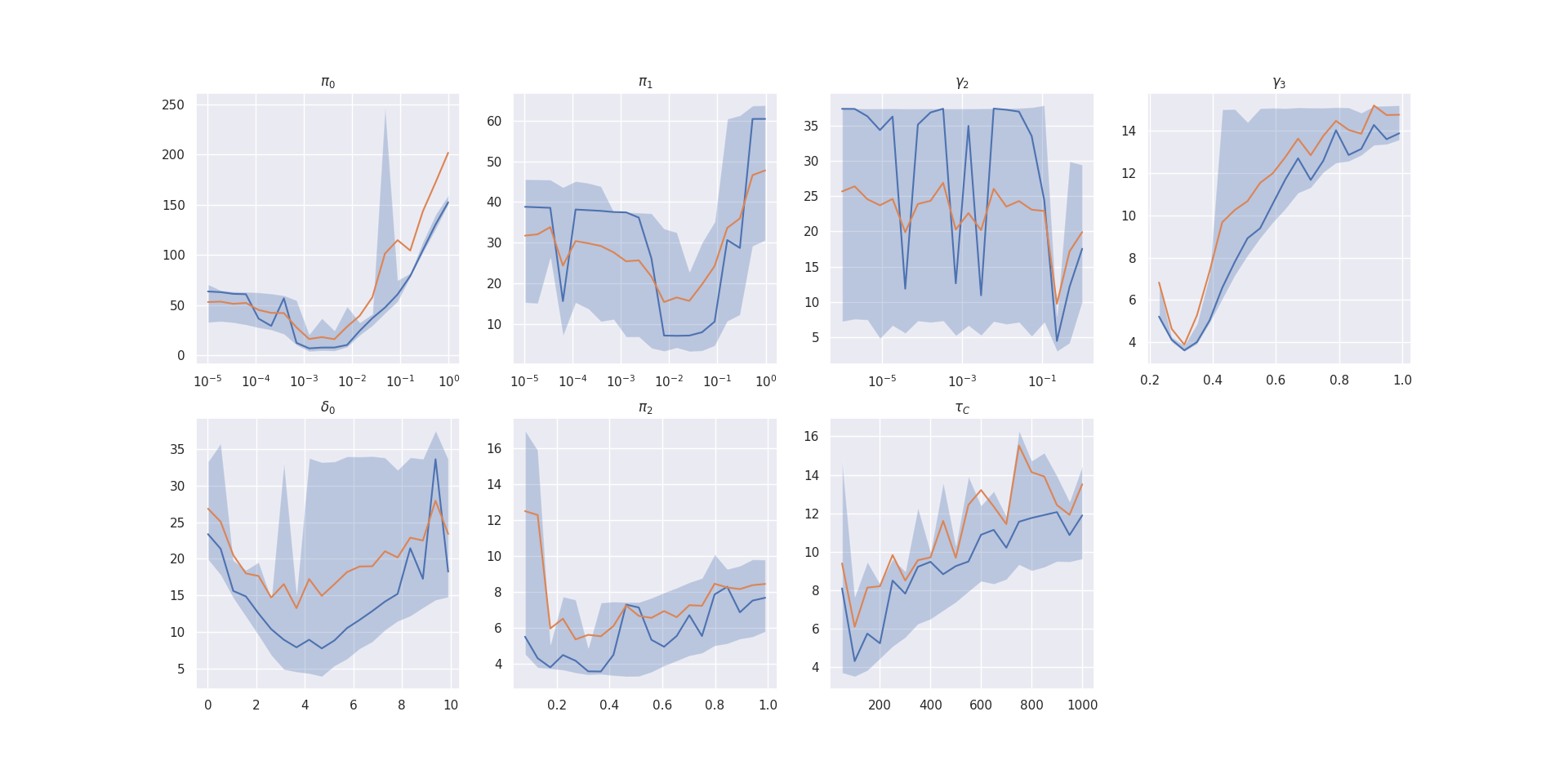}
\end{center}
\caption{\textbf{Numerical results of the identifiability problem using the profile likelihood.} {\small Each plot corresponds to a specific identifiable parameter ($\pi_0$, $\pi_1$, $\pi_2$, $\gamma_2$, $\gamma_3$, $\tau_C$, $\delta_0$).  The x-axis describes the values taken by the fixed parameter $\theta_j$. The y-axis denotes the corresponding fitting error defined in \eqref{eq:prof_lik}. The gray area represents 75$\%$ of the results between the first and last quartile (see Appendix \ref{app:ident_prof_lik}). The median (in blue) and the mean (in orange) for 50 iterations of the minimization problem show that each parameters admits a distinct minimum value under 10 for the fitting error.}}
\label{fig:ident_graph}
\end{figure}

As a result, we clearly distinguished seven identifiable parameters in our model (see Figure \ref{fig:ident_graph}):  the progression transfer rates $ \pi_0, \; \pi_1$ and $\pi_2$, the proliferation rates of the PanIN and the PDAC respectively $\gamma_2$ and $\gamma_3$, the saturation term for the proliferation of the PanIN and PDAC $\tau_C$ and finally the amplitude of the Michaelis-Menten term describing the effect of the PanIN and the PDAC on the progression of the Acini into ADM $\delta_0$. These parameters and their estimated values are gathered in the Table \ref{tab:ident_par}.

\begin{table}[ht!]
\begin{center}
\begin{tabular}{|l|c|c|c|c|c|c|c|}
 \hline
  & $\pi_0$ & $\pi_1$ & $\pi_2$ & $\gamma_2$ & $\gamma_3$ & $\tau_C$ & $\delta_0$\\
  \hline
  Estimated value & 0.0025 & 0.00942 & 0.176 & 0.229 & 0.31 & 100 & 3.65\\
  \hline
  Confidence range & $(10^{-4},10^{-2})$ & $(10^{-3},10^{-1})$ & $(10^{-1},0.5)$ & $(10^{-1},0.3)$ & $(0.2,0.4)$ & $(50,200)$ & $(3,5)$\\
  \hline
\end{tabular}
\end{center}
\caption{\textbf{Values of the identifiable parameters.} }
\label{tab:ident_par}
\end{table}
The identifiability of these seven parameters is further verified by comparing the cost distribution of the objective function in two cases: when we fix the identifiable parameters and minimize on all the remaining parameters and when all the parameters are free during the optimization. In the first case, the cost distribution is located around 4 (close to the minimal value obtained in the numerical simulation). In the second case, the cost distribution is much more spread (see Figure \ref{fig:hist_ident} in the Appendix). It confirms that the parameters are correctly estimated thanks to the data available since the distribution of fitting errors is concentrated on small values. Further details about the numerical methods are postponed in the Appendix \ref{app:ident_prof_lik}. In the end, this method allows to obtain a data-driven identifiability criterion without making assumptions on the experimental noise. It also gives estimators and their confidence range (see Table \ref{tab:ident_par}) for the identifiable parameters of the model.

\subsubsection{Biological interpretations} \label{sec:bio_inter}
{\textbf{Interpretation of the identifiability results.} This numerical study around the identifiability of the parameters of the model gives interesting insights about the experimental data and the biological processes behind it. First, we observed that the transfer rates from Acini to ADM $\pi_0$, ADM to PanIN $\pi_1 $ and PanIN to PDAC $\pi_2 $ are identifiable. Moreover, the estimates for the transfer rates respect the following order relation $$\pi_2 > \pi_i, \quad i=0,1. $$
It implies that the progression process appears to go faster at the late stage of the PDAC development.\\
We also obtain estimates for the proliferation rates of the PanIN $ \gamma_2$ and the PDAC $\gamma_3$. It implies that the data combining the proportion of cell populations at 45 days and the knowledge of the first time of appearance of each cell population give a sufficient amount of knowledge in order to calibrate the speed of the proliferation mechanism in the model. 
However, the threshold parameters $\tau_c$ is likely to be underestimated in regards to the data at our disposal. The value of this threshold is of a different order of magnitude compared to the other parameters. Therefore, its range of acceptable estimates is also wider compared to the other parameters and additional data are needed in order to compute a finer estimate.\\

\textbf{Sentivity analysis results.}The other parameters seem unidentifiable but a variance-based sensitivity analysis (see \cite{sobol1990sensitivity,sobol2001global}) allows us to acquire additional information. In order to perform this analysis, we restrict ourselves to the parameters linked to the effect of the axons on the cell population concentrations:  the parameters in the transition rates $\beta_1, \; \beta_2$ and $\delta_2$, the parameters in the proliferation terms $\tau_{A_1}^C$ and $\tau_{A_2}^C$ (c.f. Table \ref{tab:parameters}). 
The other parameters are fixed and are given by the last set of parameters in the Table \ref{tab:par_Opti_1}. The choice of this specific set of parameters is based on qualitative considerations such as its associated cost given by \eqref{eq:cost_func}, the fact that the associated trajectories are similar to the expected behavior (e.g. the decay of $A_1$, etc).
Hence, the inputs are these five parameters, which are denoted $$\vartheta = \left(\beta_1, \beta_2, \delta_2, \tau_{A_1}^C, \tau_{A_2}^C\right).$$ 
Concerning the output of the sensitivity analysis, our focus is on the PDAC cells. We introduce the following indicator of the variation of the PDAC cells:
\begin{equation}
    \mathcal{V} = 1 -\frac{\int_{t_0}^{t_F}Q_3(s;\vartheta)ds}{\int_{t_0}^{t_F}Q_3(s)ds},
    \label{eq:output_sa}
\end{equation}
where $Q_3(\cdot)$ denotes the PDACs concentration for the control parameters set (given by the whole last set of parameters in the Table \ref{tab:par_Opti_1}), $Q_3(\cdot ; \vartheta) $ denotes the PDAC cells concentration for the input parameters $\vartheta$, $t_0 = 10 $ and $t_F = 70$ correspond to the initial time and the finite time for the model simulations. 
The interpretation of the output $\mathcal{V}$ in \eqref{eq:output_sa} is the following:
\begin{itemize}
    \item if $|\mathcal{V}| \leq \epsilon $ for $\epsilon$ arbitrary small, then axons have almost no effect on the appearance of PDACs between days $t_0$ and $t_F$.
    \item If $\mathcal{V} >\epsilon$, the inputs in $\vartheta$ have an inhibiting effect on the PDACs between days $t_0$ and $t_F$.
    \item Conversely, if $\mathcal{V} < - \epsilon $, the inputs in $\vartheta$ have a positive impact on the PDACs between days 10 and 70 which can also be interpreted as a protumoral effect of the axons.
\end{itemize}

\begin{figure}[ht!]
\begin{center}
\includegraphics[width=0.8\textwidth]{./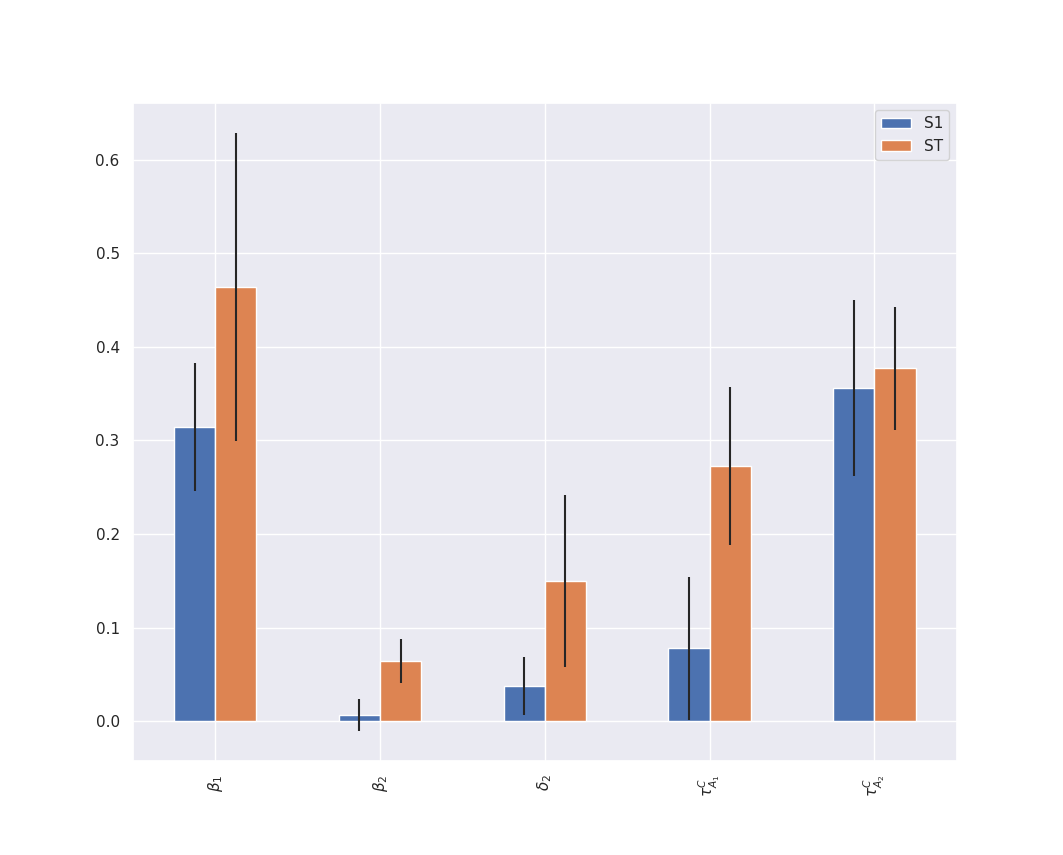}
\end{center}
\caption{\textbf{Indices of the sensitivity analysis of the parameters linked to the axons on the PDAC cells.} {\small The color blue correspond to the first-order-effect indices (S1) of the Sobol sensitivity analysis and the color orange correspond to the total-effect indices (ST). The black segment corresponds to the confidence interval for the associated sensitivity index. }}
\label{fig:sensitivity}
\end{figure}
The results of the variance-based sensitivity analysis are given by the first-order indices and the total-effect indices respectively S1 an ST in the Figure \ref{fig:sensitivity} (see \cite{saltelli2000sensitivity}). Both types of index measure the contribution of the effect $\vartheta_i$ to the output variance. However, the first-order index measure the effect of varying $\theta_i$ alone and is averaged over the variations of the other input parameters. Whereas, the total-effect index gives the contribution of $\vartheta_i$ and its interactions with any other inputs or tuple of inputs. This approach is also called the global sensitivity analysis because it also measures the sensitivity of any tuple of inputs. \\
Ultimately, the following conclusions can be drawn from the sensitivity analysis (see Figure \ref{fig:sensitivity}).
\begin{itemize}
    \item The two main contributors to the variability of the PDACs are the parameters $\beta_1$ and $\tau_{A_2}^C$. Both parameters have an inhibiting effect on the PDACs when their values are growing. However, they do not play the same role in the system (the first regulates the effect of the autonomic axons on the transfer between ADM and PanIN and the latter regulates the effect of the sensory axons on the proliferation). 
    \item By definition, we have that $ST(\vartheta_i) \geq S1(\vartheta_i)$ and the equality holds when the model is additive. One interesting remark is that the first-order index and the total-effect index of $\tau_{A_2}^C$ are almost equivalent. This implies that the contribution of cross-effects between $\tau_{A_2}^C$ and the other parameters is small and thus that the contribution of sensory axons on PDAC proliferation through the proliferation mechanism is almost independent of other axonal mechanisms on the cell populations.
    \item The first-order indices of $ \beta_2, \delta_2 $ and $\tau_{A_1}^C$ are significantly smaller than the indices of the two other parameters. This implies that the variation of these parameters, taken one by one, has a relatively negligible effect on the variations of PDACs. Moreover, since the total-effect index of $\beta_2$ is small, this implies that the inhibitory impact of autonomous axons on the transfer of PanIN cells to PDACs does not have much impact on the overall amount of PDACs in the model.
    \item Concerning $\delta_2$ and $\tau_{A_1}^C$, the discrepancy between the first-order indices and the total-effect indices indicates that the parameters still have an effect on the overall amount of PDACs. However, the significance of the effect is primarily seen through the interactions between one of these two parameters and the others (i.e., when measuring the impact of varying pairs or tuples of parameters simultaneously).
\end{itemize}

\begin{figure}[ht!]
\begin{center}
\includegraphics[width=\textwidth]{./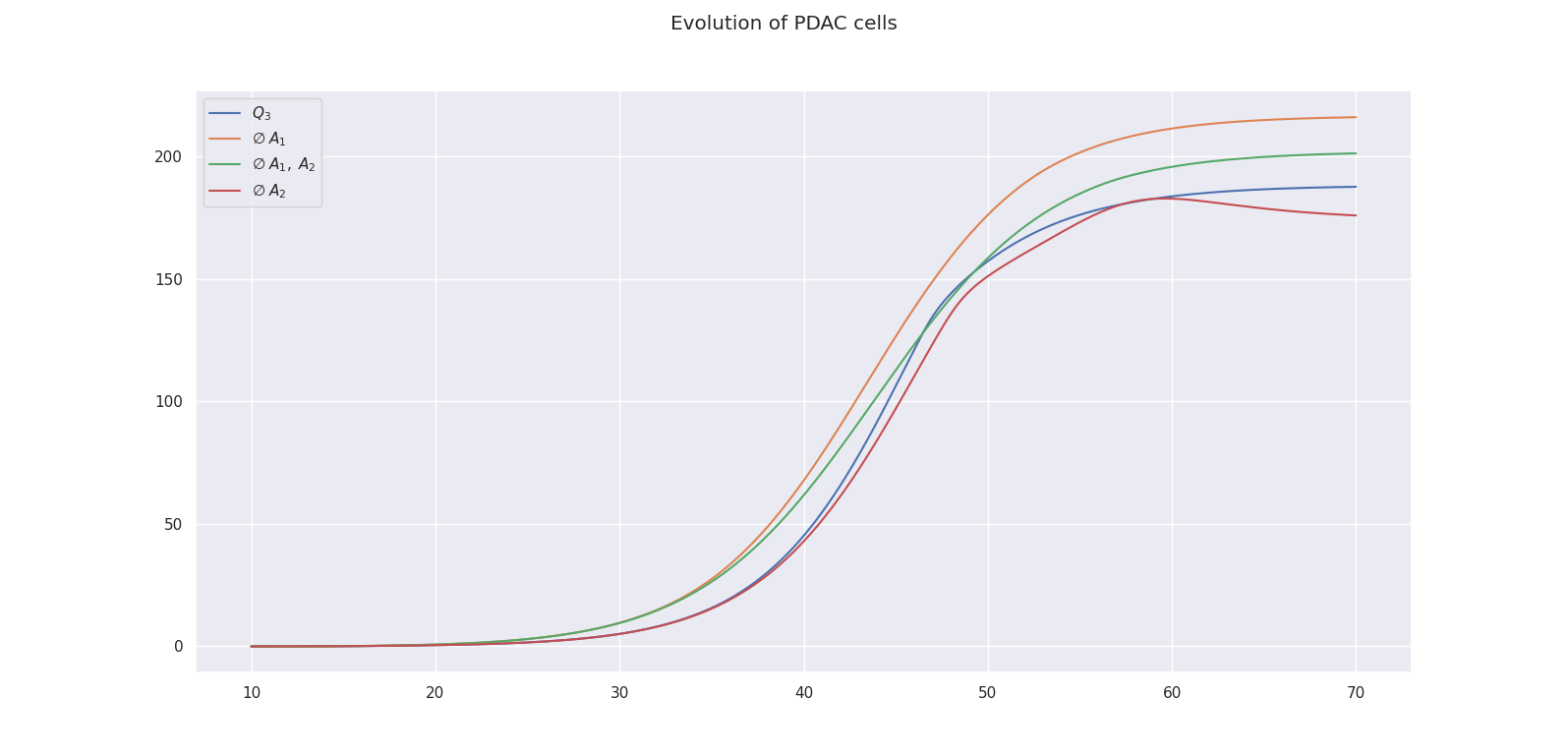}
\end{center}
\caption{\textbf{Numerical simulations of the time evolution of PDAC cells.} {\small The x-axis describes the time in day. Each curves corresponds to the simulation of the quantity $Q_3$ in different cases: when there is no denervation (blue), when the effect of the autonomic axons is off (orange), when the effect of the sensory axons is off (red) and when the effect of the autonomic and sensory axons is off (green). The set of parameters for the simulation is given in the Table \ref{tab:par_Opti_1} (last set of parameters).}}
\label{fig:denerv_curves}
\end{figure}

\textbf{In silico denervation.} A first step of the model validation has been realized in the previous sections through the calibration of the model from the biological data and the study of the identifiability of the parameters. 
In what follows, we consider the dynamics associated to the last set of parameters in Table \ref{tab:par_Opti_1} and define as a control the corresponding evolution of PDACs between days 10 and 70 (the blue curve in Figure \ref{fig:denerv_curves}). 
Now, a more \textit{qualitative} approach of model validation is considered by making denervation in silico. This model validation step is based on three types of denervation and corresponding observations:
\begin{itemize}
    \item in \cite{guillot2020sympathetic} and \cite{renz2018cholinergic} , it has been observed that the denervation of the autonomic axons has a pro-tumoral effect. The pro-tumoral effect of the denervation is also observed in the numerical simulation in Figure \ref{fig:denerv_curves}. The orange curve corresponds to the amount of PDACs ($Q_3 $) when the parameters in front of the autonomic axons in the transfer rates are negligible ($\beta_1 = \beta_2 =0 $) and when the parameters dividing the autonomic axons in the proliferation terms is large $\big(\tau_{A_1}^C = 100$ and $A_1/\tau_{A_1}^C \approx 10^{-3}\big)$. Hence, we see that the PDACs appear earlier and converge toward a bigger plateau compared to the control curve (blue).
    
     \item The denervation of the sensory axons alone corresponding to the red curve in Figure \ref{fig:denerv_curves} shows that the denervation has an anti-tumoral effect by delaying the PDACs arrival. This can be performed by taking $\delta_2=0$ and $\tau_{A_2}^C = 100$ in the model.  The immediate conclusion is that the sensory axons have a pro-tumoral role in the PDAC progression which is corroborated by the experimental conclusions in \cite{saloman2016ablation}.

    \item  In \cite{guillot2020sympathetic} , a pro-tumoral effect has been observed when a denervation of both autonomic and sensory axons has been performed. The same effect is also observed in the numerical simulation (green curve in Figure \ref{fig:denerv_curves}). In order to simulate this additional denervation, we take the same values for the parameters linked to the autonomic axons. We set to 0 the parameter of the sensory axons in the transfer rate ($\delta_2=0$) and to a large value the parameter linked to the effect of the sensory axons in the proliferation terms ($ \tau_{A_2}^C=100$). One can note that compared to the autonomic denervation case, the pro-tumoral effect is not as strong when the sensory axons are also denervated.
   Finally, the three types of denervation performed in vivo are also performed in silico and the in vivo and in silico conclusions are analogous.
   \end{itemize}

}
{
\section{Conclusion}
In this paper, we develop an original model to investigate the role of peripheral axons in pancreatic cancer progression. The study of the calibration to the experimental data highlights the genericity of the
model and the fine analysis of the data informes on the underlying mechanisms. On the one hand, the optimization process to study the parameters identifiability and to obtain the parameters estimations is performed on mouse data but could be adapted to human data. On the other hand, it also gives the intricate links between experimental data, model parameters and underlying biological mechanisms. For
instance, the information on the relatively low amount of PanIN cells coupled to the information on the chronological
time of appearances lead to the conclusion that the speed of tumour progression accelerate towards the
late stage of PDAC development. In addition, the model is very useful for testing hypotheses with the help of numerical
simulations. The model allows us to simulate the effect of a partial or complete denervation at any time.
It sheds lights on complex correlation between the cell populations and the axons and confirms some biological
observations.\\
A first step for expansion and improvement is to investigate further the acquisition of the experimental data in
order to obtain quantitative data reducing the predictive uncertainty of the model. As an example, additional
measurements are needed to quantify more precisely the evolution of sensory and autonomic axon density over
time. These measurements would be related to the speeds of axon proliferation in the model. Moreover, these
speeds aggregate into a coefficient various effects coming from different cell populations. This uncertainty can be reduced by studying and building an optimal experimental design in relation to the mathematical model.\\
The model could be further improved by considering the phenotype of the cell as a continuous variable. In particular, this approach leads to a coupled model with a partial differential equation and differential equations. This formalism would allow a more precise description of the tumour progression and the neuroplastic changes occurring during this process. In particular, this would allow the incorporation of neglected cell categories (e.g. PanIN 1, PanIN 2, PanIN 3, etc) into the model and it would lead to a finer representation of the biological mechanisms.
\\
An additional extension step would be to more accurately include other component of the tumor microenvironment (as example the immune system) in the modeling. However, the precision gained in the modeling automatically leads to an increase in the data required for calibration and an increase in the complexity of the predictions that can be made by this new model. 

}

\medskip 

\textbf{Fundings.}
This research was supported by Centre National de la Recherche Scientifique (CNRS), France; grant from INCa, Fondation Arc et
Ligue contre le cancer (PAIR Pancreas, project title: ”The impact of axonogenesis in pancreatic cancer”, convention number 186738) to FM and FH.

\textbf{Acknowledgements.}
We thank Huyen Thi Trang Nguyen, Adrien Lucchesi, Chloé Dominici and Angélique Puget (Institut de Biologie du Développement de Marseille, IBDM) for the analysis of parasympathetic and sensory innervation in KIC mice. We thank Marie-Jos\'e Chaya for the helpful comments.

\textbf{Competing Interests.} The authors have declared that no competing interests exist.
\bibliographystyle{unsrt}
\bibliography{axons}

\appendix
\section{Additional Lemmas}
\begin{lemma}\label{lem:bnd_y}[Bounds on the cancerous cells]\\
Let $Q_0(0) >0$, $Q_1(0) = Q_2(0) = Q_3(0) =0$, $ A_1(0) \in (-\tau_{A_1}, \tau_{A_1})$ and $A_2(0)\in (0, \tau_{A_2})$. Let $\tau_{A_1} <\tau_{A_1}^C$.
Then it exist $t^* >0$ and two constants $0<c_y<C_y$ such that
$$\forall t> t^*, \quad c_y \leq Q_2(t) +Q_3(t) \leq C_y. $$ 
\end{lemma}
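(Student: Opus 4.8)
The plan is to study the scalar quantity $y(t):=Q_2(t)+Q_3(t)$. Adding the $Q_2$- and $Q_3$-equations of \eqref{eq:sys_abs} the transfer terms $\mp f_2(A_1,A_2)Q_2$ cancel, leaving
$$\dt y = \gamma_2 Q_2 z + \gamma_3 Q_3 z + f_1(A_1)Q_1, \qquad z:=1+\tfrac{A_1}{\tau_{A_1}^C}+\tfrac{A_2}{\tau_{A_2}^C}-\tfrac{y}{\tau_C}.$$
Throughout I would use three facts already available: nonnegativity of the $Q_i$ and the bounds $-\tau_{A_1}\le A_1\le\tau_{A_1}$, $0\le A_2\le\tau_{A_2}$ (Proposition \ref{prop_pos}); the exponential bound $Q_1(t)\le Ke^{-rt}$ for some $K,r>0$, valid here because $Q_1(0)=0$ (Proposition \ref{prop:exp_decay}); and Hypothesis \eqref{hyp:third}, i.e. $\eta:=1-\tau_{A_1}/\tau_{A_1}^C>0$. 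A preliminary observation is that $y(t)>0$ for every $t>0$: from $\dt Q_0\ge -M_0 Q_0$ we get $Q_0(t)\ge Q_0(0)e^{-M_0 t}>0$; feeding this into $\dt Q_1\ge m_0 Q_0-M_1 Q_1$ with $Q_1(0)=0$ gives $Q_1(t)>0$ for $t>0$; feeding that into $\dt Q_2\ge m_1 Q_1 - c\,Q_2$ (with $c>0$ bounding $M_2+\gamma_2|z|$ on the compact interval considered, using continuity of $y$) with $Q_2(0)=0$ gives $Q_2(t)>0$, hence $y(t)\ge Q_2(t)>0$, for $t>0$.

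For the upper bound, set $C_*:=1+\tau_{A_1}/\tau_{A_1}^C+\tau_{A_2}/\tau_{A_2}^C$, so that $z\le C_*-y/\tau_C$. The only regime to control is $y>\tau_C C_*$: there $z<0$, hence $\gamma_2 Q_2 z+\gamma_3 Q_3 z\le 0$ and $\dt y\le f_1(A_1)Q_1\le M_1 K e^{-rt}$, a function of finite integral $B:=M_1K/r$. A barrier argument then gives $y(t)<C_y:=\tau_C C_*+B+1$ for all $t\ge 0$: if $y$ first reached $C_y$ at some time $t_1$, then, since $y(0)=0<\tau_C C_*<C_y$ and $y$ is continuous, there is a last time $t_0<t_1$ with $y(t_0)=\tau_C C_*$ and $y>\tau_C C_*$ on $(t_0,t_1]$, and integrating $\dt y\le M_1 K e^{-rs}$ over $[t_0,t_1]$ yields $y(t_1)\le\tau_C C_*+B<C_y$, a contradiction. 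This part is routine.

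The lower bound is the substantive step and is where \eqref{hyp:third} enters decisively. Put $\epsilon:=\tfrac12\tau_C\eta>0$ and $\kappa:=\tfrac12\eta>0$. On the slab $\{0<y\le\epsilon\}$, using $A_1\ge-\tau_{A_1}$ and $A_2\ge 0$,
$$z\ge 1-\tfrac{\tau_{A_1}}{\tau_{A_1}^C}-\tfrac{\epsilon}{\tau_C}=\eta-\tfrac{\eta}{2}=\kappa>0,$$
so, dropping the nonnegative term $f_1(A_1)Q_1$ and using $\gamma_2 Q_2 z+\gamma_3 Q_3 z\ge\min(\gamma_2,\gamma_3)\,y\,z$ (valid because $z\ge 0$),
$$\dt y\ge\min(\gamma_2,\gamma_3)\,\kappa\,y\qquad\text{whenever } 0<y\le\epsilon.$$
This has two consequences. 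First, whenever $y(t)=\epsilon$ one has $\dt y(t)>0$, so $\{y\ge\epsilon\}$ is forward invariant. Second, as long as $y$ remains in $(0,\epsilon]$, Gronwall gives $y(t)\ge y(1)\,e^{\min(\gamma_2,\gamma_3)\kappa(t-1)}$ for $t\ge 1$; since $y(1)>0$ by the positivity observation, the right-hand side eventually exceeds $\epsilon$, so $y$ leaves the slab at some finite time $t^*$. Combining the two, $y(t)\ge\epsilon$ for all $t\ge t^*$. Taking $c_y=\epsilon$ and the $t^*$ just obtained, and recalling that $y\le C_y$ holds for all $t$, proves the lemma.

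The main obstacle is precisely this last step: one must recognise that \eqref{hyp:third} furnishes a strictly positive ``growth floor'' $z\ge\kappa$ as soon as $y$ is small, which turns $\{y\ge\epsilon\}$ into a trap, and that the feeding chain $Q_0\to Q_1\to Q_2$ forces $y$ to be strictly positive (hence at least exponentially increasing inside the slab), so that $y$ is driven out of $\{y\le\epsilon\}$ in finite time despite starting from $y(0)=0$. The upper bound and the positivity chain are comparison / variation-of-constants arguments.
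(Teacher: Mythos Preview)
Your proof is correct and follows essentially the same strategy as the paper's: study the scalar $y=Q_2+Q_3$, use the boundedness of $A_1,A_2$ (hence of $z$) together with \eqref{hyp:third} to show $\dt y>0$ when $y$ is small and $\dt y$ is controlled when $y$ is large. The paper's version is somewhat terser --- it obtains the upper bound from a polynomial root argument (bounding $f_1Q_1$ by a constant rather than an integrable exponential) and, for the lower bound, only shows $\dt y>0$ in $\{0<y<\tau_C(1-\tau_{A_1}/\tau_{A_1}^C)\}$ after arguing by contradiction that $y$ cannot vanish identically. Your barrier argument for the upper bound, your explicit feeding chain $Q_0\to Q_1\to Q_2$ for strict positivity, and your Gronwall-plus-forward-invariance argument to actually produce the time $t^*$ and the constant $c_y$ make the conclusion fully explicit where the paper leaves it implicit; apart from these differences in execution, the two proofs are the same.
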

\begin{proof}[Lemma \ref{lem:bnd_y}]\\
We introduce the following notations : $y(t) = Q_2(t) + Q_3(t)$ and $C(A_1(t), A_2(t)) = 1+ \frac{A_1(t)}{\tau_{A_1}^C} + \frac{A_2(t)}{\tau_{A_2}^C}$. 
We recall that 
\begin{align*}
\dt y(t) &= (\gamma_2 Q_2(t) +\gamma_3 Q_3(t))\left(C(A_1(t),A_2(t)) -\frac{y(t)}{\tau_C}\right) + f_1(A_1(t)) Q_1(t). 
\end{align*}
Since $A_2(t) \leq \tau_{A_2}$ and $\tau_{A_1}< \tau_{A_1}^C$ there exists a constant $C_a$ such that $$0<C(A_1(t),A_2(t))<C_a, \quad \forall t>0 .$$
Now, we assume there exists $t_0>0$ such that $$y(t_0) > \tau_C C_a + C $$ where $C>0$ is a constant which will be discussed later and we denote $V_0$ a neighborhood of $t_0$.
Using the bounds on $Q_1$ and $f_1$, we obtain 
\begin{align*}
\dt y(t) &\leq \underline{\gamma} y(t)\left(C_a-\frac{y(t)}{\tau_C}\right) + M_1C(Q_0,Q_1),
\end{align*}
where $\underline{\gamma} \in [\gamma_2, \gamma_3]$. The function $P:\; x\in \mathbb{R}^+ \mapsto \underline{\gamma}x(C_a - x/\tau_C) +M_1C(Q_0,Q_1)$ is polynomial which admits two roots: a negative and a positive one. We denote $y^+$ the positive root of $P$ and we assume $C$ large enough such that $y^+<y(t_0)$.
Then, for $t\in V_0$, we have
$$ \dt y(t) <0 \quad \implies \quad y(t)\leq y(t_0).$$  
Moreover, since the solutions of \eqref{eq:sys_abs} are nonnegative, we obtain a uniform upper bound for $Q_2$ and $Q_3$.\\

Now, we focus on the proof of the lower bound of $y$. First, we prove that it exists $t_1\geq 0$ such that $y(t_1)>0$. Let us assume that $\forall t \geq 0$, $y(t) =0$. It implies that $\dt y(t) =0$ and that $Q_1$ is uniformly equal to $0$. Moreover, $\dt Q_1$ must be equal to 0 and then $\forall t\geq 0 \; Q_0(t) = 0$. It leads to a contradiction since $Q_0(0) >0$.\\
Hence, let us assume that $0<y(t_1)<c_a$ where $c_a = 1 -\frac{\tau_{A_1}}{\tau_{A_1}^C}$ and then $c_a \leq C(A_1(t),A_2(t))$. Moreover, we denote $V_1$ a neighborhood of $t_1$ and we have 
\begin{align*}
\dt y(t) &=(\gamma_2 Q_2(t) +\gamma_3 Q_3(t))\left(C(A_1(t),A_2(t)) -\frac{y(t)}{\tau_C}\right) + f_1(A_1(t)) Q_1(t),\\
& \geq \underline{\gamma}y(t)\left( c_a - \frac{y(t)}{\tau_C}\right).
\end{align*}
Then, for $t\in V_1$, we have $$ \dt y(t) >0 \quad \implies \quad y(t) \geq y(t_1).$$ 

\end{proof}

\begin{lemma}[Study of the vector field]\label{lem:vect_F}
 Let $z=0, \; w=0 $ and $z-\tfrac{f_2(A_1)}{\gamma_2} = 0$ be the three surfaces of interest in order to study the vector field of the system \eqref{eq:sys_abs_lim}.

\begin{itemize}
\item $w= \alpha_2 Q_2 - \alpha_3 Q_3$, $ \overrightarrow{n} = \left(\alpha_2,\; -\alpha_3,\; 0 \right)$ then $$\overrightarrow{n}\cdot f(Q_2,Q_3,A_1) = \alpha_2 Q_2 \left[ z(\gamma_2 - \gamma_3 ) - \left(1 + \frac{\alpha_3}{\alpha_2} \right) f_2(A_1)\right] .$$
\item $z=1 -\frac{Q_2 +Q_3}{\tau_C} + \frac{A_1}{\tau_{A_1}^C} + \frac{\tau_{A_2}}{\tau_{A_2}^C}$,  $ \overrightarrow{n} =\left(-\frac{1}{\tau_C},\; -\frac{1}{\tau_C},\; \frac{1}{\tau_{A_1}^C} \right)$ then $$\overrightarrow{n}\cdot f(Q_2,Q_3,A_1) = \frac{1}{\tau_{A_1}^C}w\left(1 - \left(\frac{A_1}{\tau_{A_1}} \right)^2 \right).$$
\item $z-\frac{f_2(A_1)}{\gamma_2}=1 -\frac{Q_2 +Q_3}{\tau_C} + \frac{A_1}{\tau_{A_1}^C} + \frac{\tau_{A_2}}{\tau_{A_2}^C}-\frac{f_2(A_1)}{\gamma_2}$,  $ \overrightarrow{n} =\left(-\frac{1}{\tau_C},\; -\frac{1}{\tau_C},\; \frac{1}{\tau_{A_1}^C}-\frac{f_2 '(A_1)}{\gamma_2}\right)$ then $$\overrightarrow{n}\cdot f(Q_2,Q_3,A_1) =-\frac{1}{\tau_C}f_2(A_1)\left(\frac{\gamma_3}{\gamma_2} Q_3 +Q_2\right) + \left(\frac{1}{\tau_{A_1}^C}-\frac{f_2 '(A_1)}{\gamma_2}\right)w\left(1 - \left(\frac{A_1}{\tau_{A_1}} \right)^2 \right).$$
\end{itemize}
\end{lemma}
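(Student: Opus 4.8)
The plan is to verify all three identities by direct computation, writing the right-hand side of \eqref{eq:sys_abs_lim} as a vector field $f=(F_2,F_3,F_A)$ and then taking inner products with the three normal vectors. First I would fix notation: abbreviate $f_2(A_1):=f_2(A_1,\tau_{A_2})$ and recall the functions $z=z(A_1,Q_2,Q_3)$ and $w=w(Q_2,Q_3)$ from the proof of Theorem \ref{thm: ltb_lim}, so that
$$F_2=\gamma_2 Q_2 z-f_2(A_1)Q_2,\qquad F_3=\gamma_3 Q_3 z+f_2(A_1)Q_2,\qquad F_A=w\left(1-\left(\tfrac{A_1}{\tau_{A_1}}\right)^2\right).$$
The one observation that makes the computation clean is that the transfer term cancels in the sum, $F_2+F_3=z(\gamma_2 Q_2+\gamma_3 Q_3)$, and that $\nabla z$ and $\nabla\!\left(z-f_2(A_1)/\gamma_2\right)$ both have the shape $\left(-\tfrac{1}{\tau_C},-\tfrac{1}{\tau_C},c\right)$, with $c=\tfrac{1}{\tau_{A_1}^C}$ in the first case and $c=\tfrac{1}{\tau_{A_1}^C}-\tfrac{f_2'(A_1)}{\gamma_2}$ in the second (the extra term coming from differentiating $f_2(A_1)$ with respect to $A_1$).

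For the surface $\{w=0\}$ with $\overrightarrow{n}=(\alpha_2,-\alpha_3,0)$, I would compute $\overrightarrow{n}\cdot f=\alpha_2 F_2-\alpha_3 F_3=z(\alpha_2\gamma_2 Q_2-\alpha_3\gamma_3 Q_3)-(\alpha_2+\alpha_3)f_2(A_1)Q_2$, then substitute the defining relation $\alpha_3 Q_3=\alpha_2 Q_2$ (valid only on this surface) to turn $\alpha_3\gamma_3 Q_3$ into $\alpha_2\gamma_3 Q_2$, and factor out $\alpha_2 Q_2$; this produces the stated formula $\alpha_2 Q_2\!\left[z(\gamma_2-\gamma_3)-\left(1+\tfrac{\alpha_3}{\alpha_2}\right)f_2(A_1)\right]$. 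For the two remaining surfaces, using $F_2+F_3=z(\gamma_2 Q_2+\gamma_3 Q_3)$ and the shape of the gradients above gives, in both cases, $\overrightarrow{n}\cdot f=-\tfrac{z}{\tau_C}(\gamma_2 Q_2+\gamma_3 Q_3)+c\,w\!\left(1-\left(\tfrac{A_1}{\tau_{A_1}}\right)^2\right)$. On $\{z=0\}$ the first term drops and one reads off the second identity; on $\{z-f_2(A_1)/\gamma_2=0\}$, substituting $z=f_2(A_1)/\gamma_2$ into the first term turns $\gamma_2 Q_2+\gamma_3 Q_3$ into $f_2(A_1)\left(Q_2+\tfrac{\gamma_3}{\gamma_2}Q_3\right)$, giving the third identity.

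There is no genuine obstacle: the lemma is a bookkeeping exercise about level sets of the limit vector field. The only points that require care are (i) recognising that the $f_2$ appearing in \eqref{eq:sys_abs_lim} is frozen at its second argument $\tau_{A_2}$, so that on these surfaces it may be treated as a function of $A_1$ alone, which is what legitimises the term $-f_2'(A_1)/\gamma_2$ in the third normal vector; and (ii) performing the substitution of each surface's defining equation \emph{after} forming the dot product, since it is exactly that substitution which collapses the generic expression into the simplified closed form stated in the lemma.
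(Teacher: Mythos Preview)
Your proposal is correct. The paper itself does not supply a proof of Lemma~\ref{lem:vect_F}: it is stated in the appendix as a computational fact with no accompanying argument, the three dot-product identities being left to the reader. Your write-up is precisely the verification that fills this gap, and the two simplifying observations you isolate --- the cancellation $F_2+F_3=z(\gamma_2 Q_2+\gamma_3 Q_3)$ and the post-hoc substitution of each surface's defining relation --- are exactly what makes the computation short.
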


\section{Results on asymptotically autonomous differential systems}\label{app:asymp_aut_sys}
In this section, we recall some results on asymptotically autonomous differential equations. The proofs of the results and further details can be found in \cite{markus2016ii,thieme1994asymptotically,thieme1992convergence}.
\begin{definition}
Let $f:\mathbb{R}\times \mathbb{R}^n \mapsto \mathbb{R}^n$ and $g:\mathbb{R}^n \mapsto \mathbb{R}^n$ be continuous and locally Lipschitz on $\mathbb{R}^n$.
An ordinary differential equation in $\mathbb{R}^n$ 
\begin{equation}
\dot{x}=f(t,x),
\label{eq:asymp_auto}
\end{equation}
is called asymptotically autonomous with limit equation
\begin{equation}
\dot{y}=g(y),
\label{eq:asymp_auto_lim}
\end{equation}
if $$f(t,x)\xrightarrow[t\rightarrow \infty]{} g(x),\qquad \text{locally uniformly in } x\in \mathbb{R}^n.$$
\end{definition}
We denote the $\omega$-limit set of $\omega$ of a forward bounded solution $x$ to \eqref{eq:asymp_auto} satisfying $x(t_0)=x_0 $ by $\omega(t_0,x_0)$:
\begin{equation}
\omega(t_0,x_0)=\bigcap\limits_{s>t_0}\overline{\left\lbrace x(t);\; t\geq s \right\rbrace} .
\label{eq:omeg_set}
\end{equation}

We recall the main theorems established by Markus in \cite{markus2016ii}.
\begin{theorem}
 The $\omega$-limit set $\omega$ of a forward bounded solution $x$ to \eqref{eq:asymp_auto} is nonempty, compact, and connected. Moreover
 $$dist(x(t),\omega) \xrightarrow[t\rightarrow \infty]{} 0. $$
 Finally $\omega$ is invariant under \eqref{eq:asymp_auto_lim}, i.e. if $y(t_0)=y_0 \in \omega $ and $y(t,y_0)$ its trajectory with initial point $y_0$ then $y(t,y_0)\in \omega$. In particular any point in $\omega$ lies on a full orbit of \eqref{eq:asymp_auto_lim} that is contained in $\omega$.
\label{thm:asymp_auto_1}
\end{theorem}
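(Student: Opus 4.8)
The plan is to dispatch the purely topological assertions first, using only forward boundedness of $x$, and then to establish invariance under the limit equation, which is the one place where the hypothesis $f(t,\cdot)\to g$ enters. Since $x$ is forward bounded, its orbit $\{x(t):t\ge s\}$ lies in a fixed closed ball $\overline{B}\subset\mathbb{R}^n$ for every $s>t_0$, so the sets $K_s:=\overline{\{x(t):t\ge s\}}$ form a nested family of nonempty compact subsets of $\overline{B}$, whence $\omega=\bigcap_{s>t_0}K_s$ is nonempty and compact. For $\operatorname{dist}(x(t),\omega)\to 0$ I would argue by contradiction: a sequence $t_n\to\infty$ with $\operatorname{dist}(x(t_n),\omega)\ge\varepsilon$ has, by boundedness, a subsequence converging to some $p$, and $p\in\omega$ by the very definition \eqref{eq:omeg_set}, contradicting $\operatorname{dist}(p,\omega)\ge\varepsilon$. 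Connectedness is the classical argument: if $\omega=A\sqcup B$ with $A,B$ nonempty compact and $\operatorname{dist}(A,B)=3d>0$, then since the orbit accumulates on both $A$ and $B$ while each half-line $[s,\infty)$ is connected, one produces times $t_n\to\infty$ with $x(t_n)$ at distance $\ge d$ from $A\cup B$; any limit point of $x(t_n)$ then lies in $\omega\setminus(A\cup B)$, a contradiction.

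The substantive step is invariance of $\omega$ under \eqref{eq:asymp_auto_lim}. Fix $y_0\in\omega$ and choose $t_n\to\infty$ with $x(t_n)\to y_0$. Put $x_n(s):=x(t_n+s)$, so that $\dot x_n(s)=f(t_n+s,x_n(s))$ and $x_n$ takes values in $\overline{B}$; moreover $f$ is bounded on $[t_0,\infty)\times\overline{B}$ (continuity gives a bound on the compact slab $[t_0,T_0]\times\overline{B}$, and for $t\ge T_0$ large, $f(t,\cdot)$ is uniformly close on $\overline B$ to the bounded function $g|_{\overline B}$ by the convergence hypothesis), so the $x_n$ are uniformly Lipschitz on every compact time interval. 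By Arzel\`{a}--Ascoli and a diagonal argument, a subsequence converges uniformly on compact subsets of $\mathbb{R}$ to a continuous $y$ with $y(0)=y_0$. Letting $n\to\infty$ in $x_n(s)=x_n(0)+\int_0^s f(t_n+\sigma,x_n(\sigma))\,d\sigma$, and noting that $f(t_n+\sigma,x_n(\sigma))\to g(y(\sigma))$ uniformly for $\sigma$ in a compact interval — because $t_n+\sigma\to\infty$, $x_n(\sigma)\to y(\sigma)$ uniformly with all values in $\overline{B}$, $f(t,\cdot)\to g$ uniformly on $\overline{B}$, and $g$ is uniformly continuous there — one obtains $y(s)=y_0+\int_0^s g(y(\sigma))\,d\sigma$, i.e. $y$ solves \eqref{eq:asymp_auto_lim} with $y(0)=y_0$, and by local Lipschitzness of $g$ it is the unique such solution. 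Finally, for each fixed $s\in\mathbb{R}$ one has $y(s)=\lim_n x(t_n+s)$ along the chosen subsequence, with $t_n+s\to\infty$, so $y(s)\in\omega$ by \eqref{eq:omeg_set}. Hence the whole orbit of \eqref{eq:asymp_auto_lim} through $y_0$ lies in $\omega$; since $\omega$ is compact this orbit stays in a compact set and is therefore defined for all $t\in\mathbb{R}$, which gives the ``in particular'' statement.

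The main obstacle is the passage to the limit in the second paragraph: one must check that the Arzel\`{a}--Ascoli limit $y$ solves the \emph{limit} equation and not some averaged equation, and this is exactly where the assumption that $f(t,x)\to g(x)$ \emph{locally uniformly in $x$} is indispensable — it is what lets $f(t_n+\sigma,x_n(\sigma))$ be replaced by $g(y(\sigma))$ under the integral sign uniformly in $\sigma$ over compact intervals, the compactness of $\overline{B}$ being used to upgrade ``locally uniform'' to ``uniform on $\overline{B}$''. Everything else — nonemptiness, compactness and connectedness of $\omega$, and $\operatorname{dist}(x(t),\omega)\to 0$ — is a routine compactness argument requiring nothing beyond forward boundedness of $x$.
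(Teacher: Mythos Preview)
Your proof is correct and follows the standard route (essentially the argument of Markus and Thieme that the paper cites). Note, however, that the paper does not actually prove this theorem: it is stated in the appendix as a recalled result, with the proofs explicitly deferred to the references \cite{markus2016ii,thieme1994asymptotically,thieme1992convergence}. So there is no ``paper's own proof'' to compare against --- your argument simply supplies what the paper omits, and does so in the classical way: the topological assertions from nested compact tails plus a contradiction for connectedness, and invariance via translates $x_n(s)=x(t_n+s)$, Arzel\`a--Ascoli, and passage to the limit in the integral equation using the locally-uniform-in-$x$ convergence $f(t,\cdot)\to g$. One minor remark: your final sentence (``since $\omega$ is compact this orbit stays in a compact set and is therefore defined for all $t\in\mathbb{R}$'') is slightly backwards --- you have in fact already constructed $y$ on all of $\mathbb{R}$ via the diagonal argument (the shifts $x_n$ being defined on $[-t_n+t_0,\infty)$ eventually cover any compact interval), so global existence of $y$ is established before, not after, you know $y(s)\in\omega$.
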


\begin{theorem}
Let $y_\infty$ be a locally asymptotically stable equilibrium of \eqref{eq:asymp_auto_lim} and $\omega$ the $\omega$-limit set of a forward bounded solution $x$ to \eqref{eq:asymp_auto}. 
If $\omega$ contains a point $y_0$ such that the solution of \eqref{eq:asymp_auto_lim} though $(0,y_0)$ converges to $y_\infty$ for $t \to \infty$, then $\omega=\lbrace y_\infty\rbrace$, i.e. $$x(t) \xrightarrow[t\rightarrow \infty]{} y_\infty.$$
\label{thm:asymp_auto_2}
\end{theorem}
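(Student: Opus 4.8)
The plan is to feed the structural description of $\omega$ from Theorem \ref{thm:asymp_auto_1} into a Lyapunov function attached to $y_\infty$, and then to trap the trajectory near $y_\infty$. First I would record that, by Theorem \ref{thm:asymp_auto_1}, the $\omega$-limit set $\omega$ is nonempty, compact, connected, invariant under the limit equation \eqref{eq:asymp_auto_lim}, and $\mathrm{dist}(x(t),\omega)\to 0$. Since $y_0\in\omega$ and $\omega$ is closed and invariant under \eqref{eq:asymp_auto_lim}, the entire forward orbit $\{y(t,y_0):t\ge 0\}$ lies in $\omega$; letting $t\to\infty$ and using $y(t,y_0)\to y_\infty$ together with the closedness of $\omega$ gives $y_\infty\in\omega$. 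In particular there is a sequence $t_n\uparrow\infty$ with $x(t_n)\to y_\infty$, which is the foothold the rest of the argument needs.

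Next I would exploit the local asymptotic stability of $y_\infty$ for $\dot y=g(y)$ through a (converse) Lyapunov function: there is a neighbourhood $N$ of $y_\infty$ and a $C^1$ map $L:N\to[0,\infty)$ with $L(y_\infty)=0$, $L>0$ on $N\setminus\{y_\infty\}$, $\nabla L\cdot g<0$ on $N\setminus\{y_\infty\}$, and such that for small $c>0$ the sublevel sets $N_c=\{y:L(y)\le c\}$ are compact, contained in $N$, and form a neighbourhood basis of $y_\infty$. (Should the regularity of $g$ be too weak for a smooth $L$, one replaces the $N_c$ by positively invariant compact neighbourhoods of $y_\infty$ extracted directly from the flow of $g$; the argument below is unchanged.)

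Then I would run the trapping step. Fix a small $c_1>0$ with $N_{c_1}\subset N$ compact, and put $c_2=c_1/2$. On the shell $\{c_2\le L\le c_1\}$ compactness gives $\nabla L\cdot g\le-\delta$ for some $\delta>0$. The forward orbit of $x$ is bounded, hence contained in a compact set, so asymptotic autonomy yields $\sup_{y\in N_{c_1}}\|f(t,y)-g(y)\|\to 0$; since $\nabla L$ is bounded on $N_{c_1}$ there is $T$ with $|\nabla L\cdot(f(t,\cdot)-g)|\le\delta/2$ on $N_{c_1}$ for $t\ge T$. Consequently, whenever $t\ge T$ and $x(t)$ lies in the shell, $\dt L(x(t))=\nabla L(x(t))\cdot g(x(t))+\nabla L(x(t))\cdot\big(f(t,x(t))-g(x(t))\big)\le-\delta/2<0$. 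Choosing $n$ so large that $t_n\ge T$ and $L(x(t_n))<c_2$ (possible since $x(t_n)\to y_\infty$ and $L(y_\infty)=0$), a standard exit-time argument then prevents $L(x(\cdot))$ from reaching $c_1$ after $t_n$: if $t^\ast=\inf\{t>t_n:L(x(t))=c_1\}$ were finite, on the last subinterval along which $L(x(\cdot))$ rises from $c_2$ to $c_1$ it would be nonincreasing, a contradiction. Hence $x(t)\in N_{c_1}$ for all $t\ge t_n$, and repeating the whole step with an arbitrary target $c\le c_1$ in place of $c_1$ shows that for every $c>0$ the trajectory eventually enters and stays in $N_c$. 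As the $N_c$ shrink to $\{y_\infty\}$, this gives $x(t)\to y_\infty$, i.e. $\omega=\{y_\infty\}$.

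The hard part will be precisely this last trapping argument near $y_\infty$: one must rule out that the perturbation $f(t,\cdot)-g(\cdot)$, small but of uncontrolled sign, drives the solution back out of a small neighbourhood before it settles. This is exactly why the quantitative decay $-\delta$ of $L$ along $g$ is imposed on shells bounded away from $y_\infty$ and combined with the \emph{uniform} (over a compact set, for large $t$) smallness of $f(t,\cdot)-g(\cdot)$ coming from asymptotic autonomy; the boundedness of the forward orbit of $x$ is what makes this uniformity available. A lesser technical point is securing a usable Lyapunov function (or invariant neighbourhood basis) under the sole hypothesis that $g$ is locally Lipschitz.
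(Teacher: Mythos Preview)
The paper does not prove this theorem: it is stated in Appendix~\ref{app:asymp_aut_sys} as a result \emph{recalled} from the literature, with the explicit remark that ``the proofs of the results and further details can be found in'' the cited references of Markus and Thieme. There is therefore no in-paper proof to compare against.

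That said, your argument is correct and follows a standard route to this classical result. The structure --- (i) use invariance and closedness of $\omega$ (Theorem~\ref{thm:asymp_auto_1}) to get $y_\infty\in\omega$ and hence a sequence $t_n\to\infty$ with $x(t_n)\to y_\infty$; (ii) invoke a converse Lyapunov function (or a basis of positively invariant compact neighbourhoods) for the locally asymptotically stable equilibrium of \eqref{eq:asymp_auto_lim}; (iii) use the \emph{locally uniform} convergence $f(t,\cdot)\to g(\cdot)$ on the compact sublevel set to dominate the perturbation and trap $x(\cdot)$ in successively smaller neighbourhoods --- is essentially how the theorem is proved in Thieme's work. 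You have also correctly identified the only genuine technical wrinkle: with $g$ merely locally Lipschitz, a $C^1$ Lyapunov function is not guaranteed, and one falls back on a continuous Lyapunov function or on positively invariant neighbourhoods extracted directly from the limit flow. Either variant makes the trapping step go through without change.
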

\begin{comment}
Furthermore, the Poincar\'e-Bendixson theorem is also extended to asymptotically autonomous planar systems.
\begin{theorem}Let $n=2$ and $\omega$ the $\omega$-limit set of a forward bounded solution x of \eqref{eq:asymp_auto}. Then $\omega$ either contains equilibria of \eqref{eq:asymp_auto_lim} or is the union of periodic orbits of \eqref{eq:asymp_auto_lim}.
\label{thm:asymp_auto_3}
\end{theorem}
\end{comment}
These theorems have been used in population dynamics in order to prove that asymptotically autonomous ODEs arising from the models converge to equilibrium (e.g. \cite{castillo1994asymptotically}). 
Moreover, these theorems have been generalized in \cite{thieme1992convergence} to be applied for specific PDEs.

\section{Methods for the identifiability analysis with the profile likelihood.}\label{app:ident_prof_lik}
\subsection{Parameters of the model}
Table \ref{tab:parameters} describes the list of all parameters appearing in the model equations. Each parameter is supplied with its range of values and a short description. The range of parameter values is chosen to be large for two reasons. The first is not to impose too restrictive conditions since the model is completely original and very little information is available. The second is to maximize the search space for reasonable parameters in order to calibrate the model.
\begin{table}[ht!]
\begin{center}
\begin{tabular}{|l|c|c|c|}
  \hline
  Description & Symbol & Units & Range of value  \\
  \hline
  Average density of autonomic axons in healthy pancreas & $A_1^{eq}$ & $nm(\mu m^{-3)}$  & 0.0099\\
  Saturation term of the logistic-like growth & $\tau_{A_1}$ & $nm(\mu m^{-3)}$   & $0.3$\\
  \hline
  Transfer rate of the $Q_0$ to $Q_1$  & $\pi_0$ & day$^{-1}$ & $( 10^{-5},1)$\\
  Transfer rate of the $Q_1$ to $Q_2$  & $\pi_1$ & day$^{-1}$  & $( 10^{-5},1)$\\
  Transfer rate of the $Q_2$ to $Q_3$  & $\pi_2$ & day$^{-1}$  & $( 10^{-5},1)$\\
  \hline
    Growth rate of $Q_2$ & $\gamma_2$ & day$^{-1}$  & $( 10^{-6},1)$\\
  Growth rate of $Q_3$ & $\gamma_3$ & day$^{-1}$  & $( 10^{-6},1)$\\
  \hline
    Amplitude of the effect of $A_1$ on the transfer rate & $\beta_1$ & { $nm^{-1}\mu m^{3}$ }  & $(0,\tau_{A_1}^{-1})$\\
  Amplitude of the effect of $A_1$ on the transfer rate & $\beta_2$ & { $nm^{-1}\mu m^{3}$ } & $(0,\tau_{A_1}^{-1})$\\
  Maximum amplitude of the Michaelis-Menten term & $\delta_0$ & {unitless}  & $(0,10)$\\
  Amplitude of the effect of $A_2$ on the transfer rate & $\delta_2$ & $nm^{-1}$ $\mu m^3$   & $(0,10)$\\
  \hline
  Saturation term of the logistic growth & $\tau_C$ & cells(mm$^{-3})$  & $(50,10^3)$\\
  Saturation term of the logistic growth & $\tau_{A_2}$ & $nm(\mu m^{-3})$   & $(0,2)$\\
  Threshold for the effect of $A_1$ on the growth of $Q_2$ and $Q_3$ & $\tau_{A_1}^C$ & {$nm(\mu m^{-3})$ }  & $(0.3,2)$\\
  Threshold for the effect of $A_2$ on the growth of $Q_2$ and $Q_3$ & $\tau_{A_2}^C$ & {$nm(\mu m^{-3})$ }  & $(0,2)$\\
  \hline
  Amplitude of the effect of $Q_1$ on the growth of $A_1$  & $\alpha_1$ & mm$^3$cell$^{-1}$day$^{-1}{nm(\mu m^{-3)}}$  & $( 10^{-5},1)$\\
  Amplitude of the effect of $Q_2$ on the growth of $A_1$ & $\alpha_2$ & mm$^3$cell$^{-1}$day$^{-1}{nm(\mu m^{-3)}}$   & $( 10^{-5},1)$\\
  Amplitude of the effect of $Q_3$ on the growth of $A_1$ & $\alpha_3$ & mm$^3$cell$^{-1}$day$^{-1}{nm(\mu m^{-3)}}$  & $( 10^{-5},1)$\\
  Amplitude of the effect of $Q_2$ on the growth of $A_2$ & $\bar{\alpha}_2$ & mm$^3$cell$^{-1}$day$^{-1}$   & $( 10^{-5},1)$\\
  Amplitude of the effect of $Q_3$ on the growth of $A_2$ & $\bar{\alpha}_3$ & mm$^3$cell$^{-1}$day$^{-1}$  & $( 10^{-5},1)$\\
  \hline
\end{tabular}
\end{center}
\caption{\textbf{List of the parameters for the model described by the equations \eqref{eq:Mod_Q0}-\eqref{eq:Mod_A2}.} { Recall that units associated to  the $Q_i$ are cells$(mm^{-3})$ and to $A_i$ $nm(\mu m^{-3})$}.
}
\label{tab:parameters}
\end{table}
It is then possible to divide the parameters of the model to estimate into five categories:
\begin{enumerate}
\item \textbf{The transfer rates $\pi_0, \; \pi_1  $ and $\pi_2 $.} These parameters describe the main rates of cell transfer. More specifically, the time evolution of cell and axon populations is simulated by a compartmental model and these parameters quantify the speed of transfer from one to another compartment.
\item \textbf{The proliferation speeds $\gamma_2 $ and $\gamma_3$.} These parameters give the growth speeds of the PanIN and PDAC cell populations. 
\item \textbf{The parameters of the regulations on the transfer rates $\beta_1 $, $\beta_2 $, $\delta_0 $ and $\delta_2 $.} These parameters are multiplicative coefficients  appearing in the transfer rates. They regulate either positively or negatively the speed of transfer from one compartment to another in the model.
\item \textbf{The saturation rates $\tau_C $, $\tau_{A_1}^C $, $\tau_{A_2} $ and $\tau_{A_2}^C $.} The saturation rates are closely linked to bio-physical constraints such as the maximal volume of the model's domain (i.e. the pancreas) and the maximal axons densities in the domain. These two quantities might vary from one individual to another, however one can reasonably assume maximal bounds and implement it in the model.
\item \textbf{The parameters of the regulations on the proliferation speeds of axons $\alpha_1, $ $\alpha_2, $ $\alpha_3, $ $\bar{\alpha}_2 $ and $\bar{\alpha}_3 $.} These parameters are multiplicative coefficients appearing in the growth terms of the axons. Their main effect is to modulate the rate of axon proliferation depending on the amount of cell populations present at the observed time.
\end{enumerate}
Moreover, the density of autonomic axons in a healthy pancreas $A^{eq}_1$ is given by the control experiments and amounts to 0.980 nm($\mu$m$^{-3})$. The other parameter set in this model is $\tau_{A_1}$ (i.e. the saturation rate of $A_1$). Since $A_1$ is the variation of the density of autonomic axons with respect to $A^{eq}_1$, setting the value of $\tau_{A_1}$ to 0.3 implies that the maximum variation of the density of autonomic axons does not exceed one third of its equilibrium state. This assumption can be justified in the model by the fact that natural innervation and denervation in vivo appear to be phenomena at the margin during the development of the pancreatic adenocarcinoma. In addition, autonomic axons appear to be pushed to the periphery of the organ during the development of PDAC cells
and the domain of the model is the pancreas and its immediate surroundings.

\begin{table}[ht!]
    \centering
    \tiny{
\begin{tabular}{rrrrrrrrrrrrrrrrrr}
 $\pi_0 $ & $\delta_0 $ & $\pi_1 $ & $\beta_1 $ & $\gamma_2 $ & $\pi_2 $ & $\beta_2 $ & $\delta_2 $ & $\gamma_3 $ & $\tau_C$ & $\alpha_1 $ & $\alpha_2 $ & $\alpha_3 $ & $\tau_{A_1}^C $ & $\bar{\alpha}_2$ & $\bar{\alpha}_3$ & $\tau_{A_2} $ & $\tau_{A_2}^c $\\
\hline
 1.5e-3 & 8.4 & 1.9e-5 & 3.2 & 4.2e-1 & 1.1e-3 & 2.2 & 3.2 & 9.3e-1 & 8.5e+1 & 1.6e-5 & 8.4e-2 & 4.7e-4 & 3.6e-1 & 6.0e-1 & 1.6e-5 & 1.9 & 1.6\\
 1.5e-3 & 7.8 & 6.4e-3 & 1.5e-1 & 1.2e-6 & 8.6e-2 & 6.0e-1 & 8.6 & 4.3e-1 & 1.0e+2 & 1.7e-3 & 2.2e-1 & 1.5e-3 & 1.8 & 7.2e-1 & 1.4e-5 & 2.0 & 1.4\\
 1.3e-3 & 8.8 & 8.6e-3 & 2.9e-1 & 3.6e-1 & 2.7e-1 & 2.5e-1 & 2.6e-1 & 3.6e-1 & 1.0e+2 & 4.4e-5 & 1.3e-2 & 7.8e-5 & 2.0 & 3.5e-1 & 1.5e-5 & 1.9 & 1.6\\
 1.3e-3 & 9.9 & 1.8e-5 & 3.3 & 6.9e-1 & 1.6e-1 & 3.3e-2 & 2.0e-1 & 6.9e-1 & 6.3e+1 & 3.4e-4 & 7.5e-3 & 8.6e-5 & 2.0 & 3.1e-1 & 3.8e-5 & 2.0 & 1.5e-1\\
 1.4e-3 & 9.1 & 4.5e-3 & 6.5e-2 & 8.2e-2 & 5.5e-2 & 9.9e-2 & 9.7 & 4.3e-1 & 9.3e+1 & 1.2e-5 & 5.3e-2 & 3.0e-4 & 5.4e-1 & 6.6e-1 & 1.3e-5 & 1.7 & 2.0\\
 1.7e-3 & 7.7 & 9.1e-3 & 3.2 & 2.1e-6 & 7.5e-2 & 1.5e-1 & 8.9 & 3.7e-1 & 1.0e+2 & 1.9e-5 & 3.8e-2 & 2.2e-4 & 7.7e-1 & 6.5e-1 & 2.3e-5 & 1.8 & 1.6\\
 2.2e-3 & 4.9 & 5.6e-2 & 3.2 & 1.0e-2 & 4.9e-1 & 3.3 & 3.1e-1 & 2.0e-1 & 1.9e+2 & 1.0e-5 & 4.8e-1 & 6.0e-3 & 2.0 & 9.6e-1 & 1.0e-5 & 1.5e-1 & 2.0
 
\end{tabular}
}
    \caption{Table of parameters for the numerical computations in Figure \ref{fig:Opti_1}}
    \label{tab:par_Opti_1}
\end{table}

\subsection{Numerical method to assess the identifiability with the profile likelihood}
In this section, we detail the method used to study the identifiability of the parameters. It is assimilated to an  optimization problem and can be considered as a \textit{one-at-a-time} identifiability process since we study the profile likelihood of each parameter independently. We recall that the parameters are denoted by the vector $\theta$ which include all the parameters except for $\tau_{A_1} $ and $A_1^{eq}$ fixed as in Table \ref{tab:parameters}. The computation of the profile likelihood is broken down as follows:\\

\textit{Step 1.} For each component $\theta_j$ of the vector of parameters, we choose a sequence of 20 values which discretizes its range of values. The sequence discretize uniformly the range of values of the parameters $ \delta_0, \; \beta_1, \; \beta_2 , \; \delta_2 , \;\tau_C , \; \tau_{A_1}^C, \; \tau_{A_2} $ and $\tau_{A_2}^C$. For the other parameters, the sequence discretize uniformly the log$_{10}$ transformation of their respective ranges of value. This allows us to explore more precisely the optimization domain by taking into account the differences in the orders of magnitude between parameters. The sequence of values chosen for the parameter $\theta_j$ is denoted $\left(p_i^j\right)_{i=1\hdots 20}. $\\

\textit{Step 2.} For each $p_i^j$, we minimize $\Chi_2$ defined by \eqref{eq:cost_func_disc} on ${\theta \in \left\lbrace \theta\;  |\; \theta_j =p_i^j \right\rbrace}$. Since, the problem is non-linear, non-convex and high dimensional, we choose to repeat 50 times the optimisation procedure using as the initialization step a random sets of parameters chosen by the uniform distribution over their respective value range (or over the log$_{10}$ transformation of their value range). For this optimization problem, we use the CMA evolution strategy algorithm \cite{hansen2016cma} : at each step of the optimization loop, the algorithm picks a set of parameters given by a specific random distribution over the optimization domain. It evaluates the objective function value for this set of parameters and iterate by updating the random distribution with the barycenter of the "best" parameters (i.e. whose objective function values are the lowest). The algorithm stops if the distance between the best evaluation of the objective function of the last 40 iterations of the optimization loop and all the objective function values of the last iteration is less than the tolerance threshold $10^{-3}$.\\

\textit{Step 3.} For each $\theta_j$, we plot the first quartile, the last quartile and the median of the 50 optimal objective function values obtained in order to visualize the results of \textit{step 2}. It gives the following Figures \ref{fig:boxplot_pl1}-\ref{fig:boxplot_pl2}. If the curves formed by the median describe a convex shape and a minimum is clearly attained in a distinct value, we consider that the parameter is identifiable and its estimate is this distinct values (for instance, in Figure \ref{fig:boxplot_pl1}, the top panel shows that median attains its minimal value at $\pi_0 = 0.0025.$). Else, we iterate \textit{step 2} once or twice on a reduced domain where the already identifiable parameters are fixed to their estimated values.\\

The numerical results of this optimization process are summarized in Figures \ref{fig:boxplot_pl1}-\ref{fig:boxplot_pl2}. We denote \textit{trial 1} the first iteration of \textit{step 2}, \textit{trial 2} the second and \textit{trial 3} the third. In total, seven parameters are identifiable (see Figure \ref{fig:ident_graph} and Table \ref{tab:ident_par}). We choose to stop iterating the identifiability process after the third trial considering the amount of data at our disposal and the over fitting issues coming from the high dimensionality of our problem.
 One can reasonably justify this choice by looking at the distribution of the objective function values in \textit{trial 3} (see Figures \ref{fig:boxplot_pl1}-\ref{fig:boxplot_pl2}). The cost function values over the whole range for the remaining parameters are concentrated under 10 and the median of the cost distribution does not have a clearly distinguishable minimum value. An extra iteration might not ensure a sufficient difference between the minimum of the median and the rest of the median values. \\
\begin{figure}[!ht]
\centering
\includegraphics[width=0.8\textwidth]{./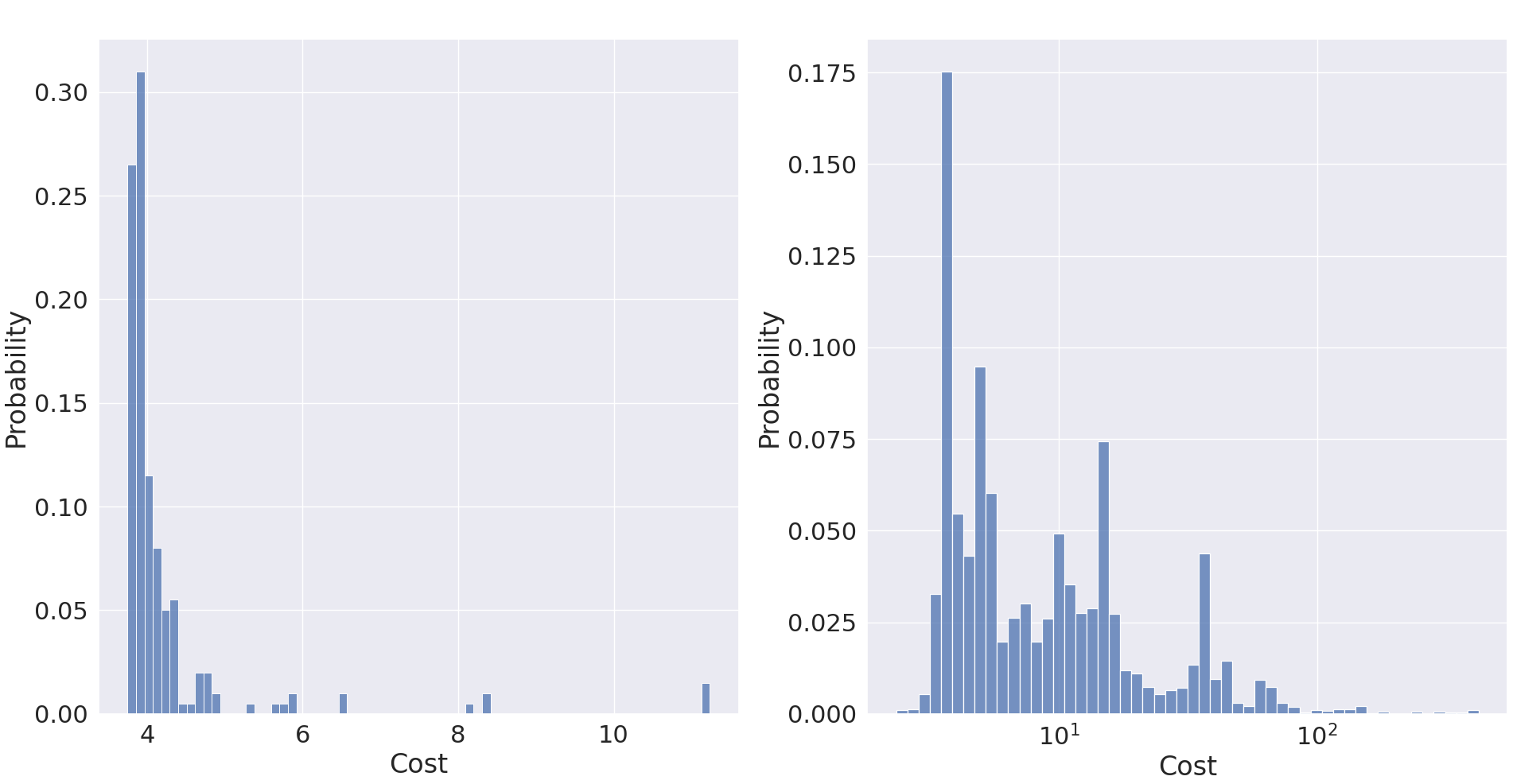}
\caption{\textbf{Distribution of the evaluations of the optimal objective function.} The x-axis is the range of the cost function. The y-axis is the frequency over a sample of size 200. }
\label{fig:hist_ident}
\end{figure}
 As a final validation step, we perform the following numerical experiments:
we fix the seven identifiable parameters to their values (see Table \ref{tab:ident_par}), we pick 200 sets of rescaled parameters (for the non identifiable ones) from a uniform random distribution over their value ranges and we compute the minimization problem 
$$C = \min\limits_{ \theta \; | \; \theta_j = p^{j,*}, \; j \in J  }=  \Chi_2 (\theta) $$
where $\Chi_2$ is defined by \eqref{eq:cost_func_disc} and $J$ is the set of coordinates of the indentifiable parameters denoted $p^{j,*}$. It follows that the distribution of the optimal objective function evaluations is concentrated on 3 (see Figure \ref{fig:hist_ident}). For instance, the costs of the trajectories shown in Figure \ref{fig:Opti_1} are superior or equal to 3. This numerical experiments ensures that the results on the identifiable parameters are satisfactory from a qualitative point of view.
\begin{figure}[!ht]
\centering
        \includegraphics[width=0.9\linewidth]{./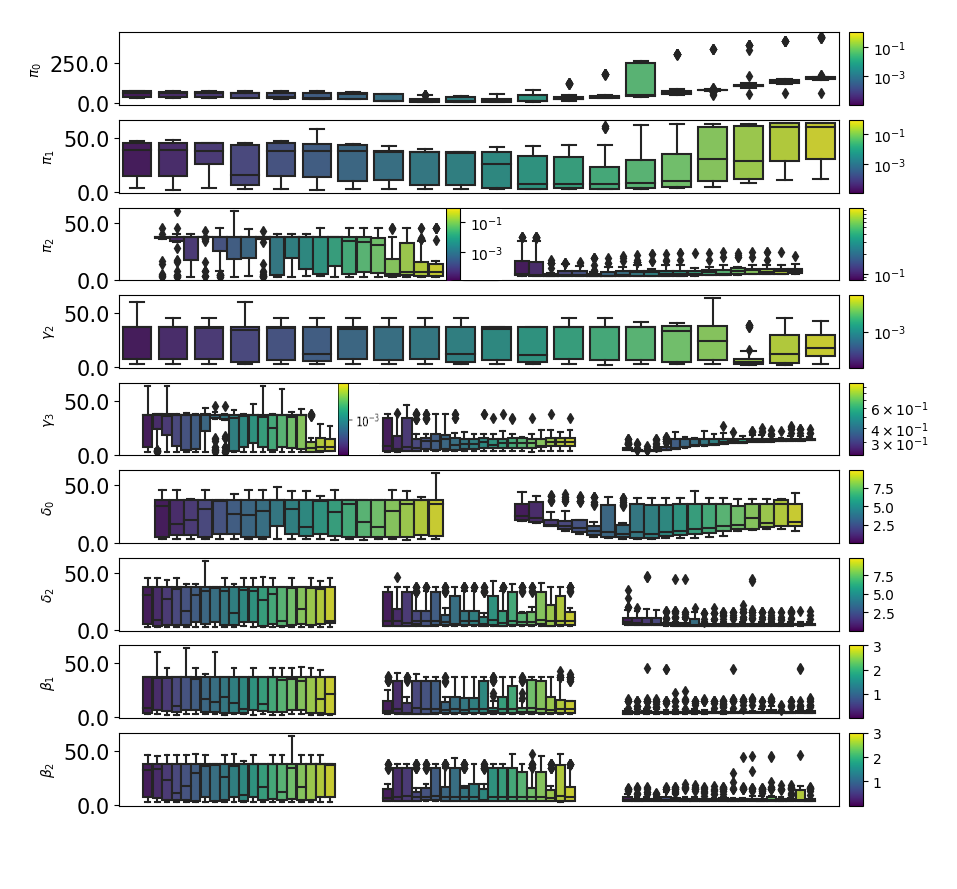}
        \caption{\textbf{Numerical computation of the profile likelihood for nine parameters.} \small{Each row corresponds to a parameter. This parameter takes twenty distinct values represented by the color gradient. The numerical method gives a cost distribution for each value of the parameter. This distribution is then visualized by a boxplot (the vertical axis represents the cost). This method is iterative for the non-identifiable parameters and this leads to a new simulation on a reduced optimization domain. For instance, $\pi_0$, $\pi_1$ and $\gamma_2$ are identifiable from the first iteration. The parameters $\pi_2$ and $\delta_0$ are identifiable from the second iteration and $\gamma_3$ from the third iteration. The other parameters are non-identifiable after three iteration.}  }
        \label{fig:boxplot_pl1}
\end{figure}
\begin{figure}[!ht]
\centering
        \includegraphics[width=0.9\linewidth]{./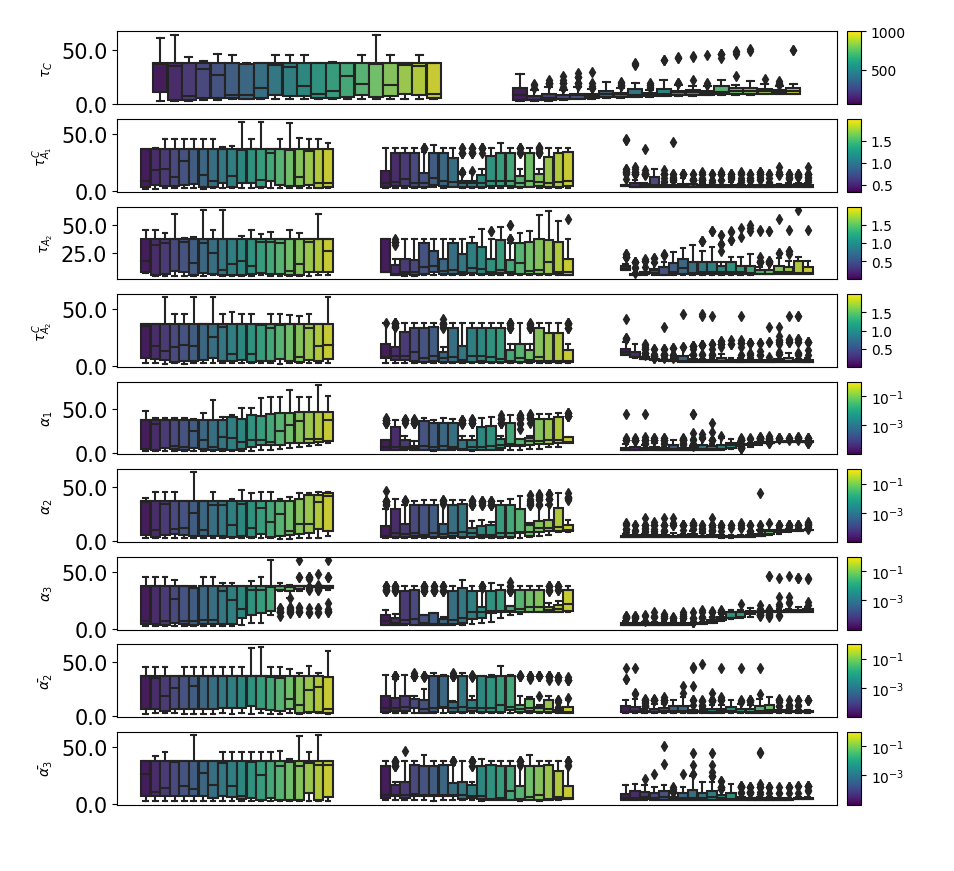}
        \caption{\textbf{Numerical computation of the profile likelihood for the other parameters.} \small{Each row corresponds to a parameter. This parameter takes twenty distinct values represented by the color gradient. The numerical method gives a cost distribution for each value of the parameter. This distribution is then visualized by a boxplot (the vertical axis represents the cost). This method is iterative for the non-identifiable parameters and this leads to a new simulation on a reduced optimization domain. For instance, only $\tau_C$ is identifiable (from the second iteration). The other parameters are non-identifiable.}}
        \label{fig:boxplot_pl2}
\end{figure}
\end{document}